\renewcommand{\exp}[1]{\operatorname{exp}\left(#1\right)} % Exponential
\providecommand{\argmin}{\mathop\mathrm{arg min}}
\newcommand{\eps}{\varepsilon}
\newcommand{\diam}{\mathrm{diam}}
\newcommand{\wh}{\widehat}
\newcommand{\wt}{\widetilde}
\newcommand{\rom}[1]{\uppercase\expandafter{\romannumeral #1\relax}}
\newcommand{\be}{\begin{equation*}}
\newcommand{\ee}{\end{equation*}}
\newcommand{\ben}{\begin{equation}}
\newcommand{\een}{\end{equation}}
\newcommand{\bmln}{\begin{multline*}}
\newcommand{\emln}{\end{multline*}}
\def\l{\left}
\def\r{\right}
\newcommand{\m}{\mathcal}
\newcommand{\mb}{\mathbb}
\newcommand{\hL}{\widehat{L}}
\newcommand{\pr}[1]{\mathbb{P}{\left(#1\right)}}
\newcommand{\pd}{\partial}
\def\ml#1{\begin{multline*}{#1}\end{multline*}}
\def\mln#1{\begin{multline}{#1}\end{multline}}
\newtheorem*{thm}{Theorem}
\newcommand{\scolor}[1]{\textcolor{cyan}{#1}} %First author's colored text
\newcommand{\shunancolor}[1]{\textcolor{red}{#1}}
\newcommand{\yrcite}[1]{\cite{#1}}
\theoremstyle{thmstyleone}%
\newtheorem{theorem}{Theorem}%  meant for continuous numbers
\newtheorem{lemma}{Lemma}
\theoremstyle{thmstyletwo}%
\newtheorem{example}{Example}%
\newtheorem{remark}{Remark}%
\newtheorem{assumption}{Assumption}
\theoremstyle{thmstylethree}%
\begin{document}

\title[Robust Bayesian inference]{Generalized Median of Means Principle for Bayesian Inference}

%%=============================================================%%
%% Prefix	-> \pfx{Dr}
%% GivenName	-> \fnm{Joergen W.}
%% Particle	-> \spfx{van der} -> surname prefix
%% FamilyName	-> \sur{Ploeg}
%% Suffix	-> \sfx{IV}
%% NatureName	-> \tanm{Poet Laureate} -> Title after name
%% Degrees	-> \dgr{MSc, PhD}
%% \author*[1,2]{\pfx{Dr} \fnm{Joergen W.} \spfx{van der} \sur{Ploeg} \sfx{IV} \tanm{Poet Laureate} 
%%                 \dgr{MSc, PhD}}\email{iauthor@gmail.com}
%%=============================================================%%

\author[1]{\fnm{Stanislav} \sur{Minsker}}\email{minsker@usc.edu}

\author*[2]{\fnm{Shunan} \sur{Yao}}\email{yaoshunan@hkbu.edu.hk}

\affil[1]{\orgdiv{Department of Mathematics}, \orgname{University of Southern California}, \orgaddress{\street{3620 S. Vermont Ave}, \city{Los Angeles}, \postcode{90007}, \state{California}, \country{USA}}}

\affil[2]{\orgdiv{Department of Mathematics}, \orgname{Hong Kong Baptist University}, \orgaddress{\street{224 Waterloo Road}, \city{Kowloon Tong}, \state{Kowloon}, \country{Hong Kong}}}

%%==================================%%
%% sample for unstructured abstract %%
%%==================================%%

\abstract{The topic of robustness is experiencing a resurgence of interest in the statistical and machine learning communities. In particular, robust algorithms making use of the so-called median of means estimator were shown to satisfy strong performance guarantees for many problems, including estimation of the mean, covariance structure as well as linear regression. In this work, we propose an extension of the median of means principle to the Bayesian framework, leading to the notion of the robust posterior distribution. In particular, we (a) quantify robustness of this posterior to outliers, (b) show that it satisfies a version of the Bernstein-von Mises theorem that connects Bayesian credible sets to the traditional confidence intervals, and (c) demonstrate that our approach performs well in applications. }

\keywords{Robustness, Bayesian inference, posterior distribution, median of means, Bernstein-von Mises theorem}

%%\pacs[JEL Classification]{D8, H51}

%%\pacs[MSC Classification]{35A01, 65L10, 65L12, 65L20, 65L70}

\maketitle

%################
\section{Introduction.}
\label{sec:intro}
%################

Modern statistical and machine learning algorithms typically operate under limited human supervision, therefore robustness - the ability of algorithms to properly handle atypical or corrupted inputs - is an important and desirable property. Robustness of the most basic algorithms, such as estimation of the mean and covariance structure that serve as ``building blocks'' of more complex methods, have received significant attention in the mathematical statistics and theoretical computer science communities; the survey papers by \citet{lugosi2019mean,diakonikolas2019recent} provide excellent overview of the recent contributions of these topics as well as applications to a variety of statistical problems. 
The key defining characteristics of modern robust methods are (a) their ability to operate under minimal model assumptions; (b) ability to handle high-dimensional inputs and (c) computational tractability. 
However, many algorithms that provably admit strong theoretical guarantees are not computationally efficient. In this work, we rely on a class of methods that can be broadly viewed as \emph{risk minimization}: the output (or the solution) provided by such methods is always a minimizer of the properly defined risk, or cost function. For example, estimation of the mean $\mu$ of a square-integrable random variable $Z$ can be viewed as minimization of the risk $L(\theta) = \mb E(Z - \theta)^2$ over $\theta\in \mb R$. Since the risk involves the expectation with respect to the unknown distribution, its exact computation is impossible. 
Instead, risk minimization methods introduce a robust data-dependent ``proxy'' of the risk function, and attempt to minimize it instead. The robust empirical risk minimization method by  \cite{brownlees2015empirical}, the ``median of means tournaments'' developed by \citet{lugosi2016risk} and a closely related method due to \yrcite{lecue2020robust2} are the prominent examples of this approach. Unfortunately, the resulting problems are computationally hard as they typically involve minimization of general non-convex functions. In this paper, we propose a Bayesian analogue of robust empirical risk minimization that allows one to replace non-convex loss minimization by sampling that can be readily handled by many existing MCMC algorithms. Moreover, we show that for the parametric models, our approach preserves one of the key benefits of Bayesian methods - the ``built-in'' quantification of uncertainty - and leads to the asymptotically valid confidence sets when the models are well-specified. At the core of our method is a version of the median of means principle, and our results demonstrate its potential beyond the widely studied applications in the statistical learning framework.

Next, we introduce the mathematical framework used throughout the text. Let $\tilde X$ be a random variable with values in some measurable space and unknown distribution $P$. Suppose that $\tilde{\m X}_N:=\l(\tilde X_1,\ldots,\tilde X_N \r)$ are the training data -- N i.i.d. copies of 
$\tilde X$. We assume that the sample has been modified in the following way: an ``adversary'' replaces a random set of $\m O<N$ observations by arbitrary values, possibly depending on the sample. Only the corrupted values $\m X_N:=\l(X_1,\ldots,X_N \r)$ are observed.

Suppose that $P$ has a density $p$ with respect to a $\sigma$-finite measure $\mu$ (for instance, the Lebesgue measure or the counting measure). We consider a family of density functions $\{p_\theta (\cdot), \theta \in \Theta\}$, where $\Theta \subset \mb R^d$ is a compact subset. However, $p$ is not necessarily in $\{p_\theta (\cdot), \theta \in \Theta\}$. We assume that the Kullback-Leibler (KL) divergence of any $p_\theta$ from $p$, i.e., $\mb E \log \frac{p(X)}{p_\theta (X)}$ is finite. We also make the necessary identifiability assumption that there exists an unknown and unique $\theta_0$ in the interior of $\Theta$ that minimizes the KL divergence from $p$ in $\{p_\theta (\cdot), \theta \in \Theta\}$. That is, $\theta_0 = \argmin_{\theta \in \Theta} \mb E \log \frac{p(X)}{p_\theta (X)}$. If $p \in \{p_\theta (\cdot), \theta \in \Theta\}$, $p \equiv p_{\theta_0}$. Otherwise, $p_{\theta_0}$ is the closest point in $\Theta$ to $p$ in terms of KL divergence. Equivalently, $\theta_0$ is the unique minimizer \footnote{Here, $\theta'\in \Theta$ is fixed but arbitrary, and we suppress it in the notation for brevity.} of $L(\theta) := \mb E(\ell(\theta, X) - \ell(\theta',X))$
%; defining $L(\theta)$ this way is convenient in what follows
over $\theta\in \Theta$, where $\ell(\theta, \cdot)$ is the negative log-likelihood, that is, $\ell(\theta, \cdot) = - \log p_\theta (\cdot)$. Clearly, an approach based on minimizing the classical empirical risk $L_N(\theta):=\frac{1}{N}\sum_{j=1}^N (\ell(\theta,X_j) - \ell(\theta',X_j))$ of $\mb E\ell(\theta, X)$ leads to the familiar maximum likelihood estimator (MLE) $\theta^\ast_N$. 
At the same time, the main object of interest in the Bayesian approach is the \emph{posterior distribution}, which is a random probability measure on $\Theta$ defined via
\begin{equation}
\label{eq:posterior}
\Pi_{N}(B\mid X_N)= \frac{\int\limits_B \prod_{j=1}^N p_\theta(X_j)d\Pi(\theta)}
{\int\limits_{\Theta}\prod_{j=1}^N p_\theta(X_j) d\Pi(\theta)}
\end{equation}
for all measurable sets $B\subseteq \Theta$. Here, $\Pi$ is the \emph{prior} distribution with density $\pi(\cdot)$ with respect to the Lebesgue measure. The following result, known as the Bernstein-von Mises (BvM) theorem that is due to L. Le Cam in its present form (see the book by \cite{van2000asymptotic} for its proof and discussion), is one of the main bridges connecting the frequentist and Bayesian approaches.
\begin{thm}[Bernstein-von Mises]
Assume that $p = p_{\theta_0}$ for some $\theta_0\in \Theta$ (that is, the model is well-specified). Then,  under the appropriate regularity assumptions on the family $\{p_\theta, \ \theta\in \Theta\}$,
\begin{align*}
    \left\|{\Pi}_N - \mathcal{N} \left(\theta^\ast_{N}, \frac{1}{N}\l(I(\theta_0)\r)^{-1}\right)\right\|_{\mathrm{TV}} \overset{P}{\longrightarrow} 0.
\end{align*} 
\end{thm}
Here, $\theta^\ast_N$ is the MLE, $\|\cdot\|_{\mathrm{TV}}$ stands for the total variation distance, $I(\theta_0)$ is the Fisher Information matrix $I(\theta) = \mb E \l[\l( \partial_\theta \ell(\theta,X)\r)\l(\partial_\theta \ell(\theta,X)\r)^T\r]$ and $\overset{P}{\longrightarrow}$ denotes convergence in probability (with respect to the distribution of the sample $\tilde {\m X}_N$). A more general version of this result that holds for possibly misspecified models has been established by \citet{kleijn2012bernstein}.

In essence, BvM theorem implies that when the model is well specified, the $1 - \alpha$ credible sets, i.e. sets of $(1-\alpha)$ \emph{posterior} probability, coincide asymptotically with the sets of $(1-\alpha)$ probability under the distribution $\mathcal{N} \left(\theta^\ast_N\,, \frac{1}{N}\l(I(\theta_0)\r)^{-1}\right)$, which is well-known to be an asymptotically valid $(1-\alpha)$ ``frequentist'' confidence interval for $\theta_0$, again under minimal regularity assumptions\footnote{For instance, these are rigorously defined in the book by \citet{van2000asymptotic}.}. It is well known however that the standard posterior distribution is, in general, not robust: if the sample contains even one corrupted observation, referred to as an ``outlier'' in what follows, the posterior distribution can concentrate arbitrarily far from the true parameter $\theta_0$ that defines the data-generating distribution. Concrete scenarios showcasing this fact are given in \cite{baraud2020robust} and \cite{owhadi2015brittleness}; another illustration is presented below in example \ref{ex:2}. 
The approach proposed below addresses this drawback: the resulting posterior distribution (a) admits natural MCMC-type sampling algorithms and (b) satisfies quantifiable robustness guarantees as well as a version of the Bernstein-von Mises theorem that is similar to its classical counterpart in the outlier-free setting, where the credible sets associated with the posterior distribution are asymptotically valid confidence intervals (in the well-specified model scenario). 
%that are also robust to sample contamination. %Many related results in the literature either do not study asymptotic behavior of the credible sets, or the obtained results are suboptimal. 

Many existing works are devoted to robustness of Bayesian methods, and we attempt to give a (necessarily limited) overview of the state of the art. The papers by \citet{doksum1990consistent} and \citet{hoff2007extending} investigated approaches based on ``conditioning on partial information,'' while a more recent work by \citet{miller2015robust} introduced the notion of the ``coarsened'' posterior; however, non-asymptotic behavior (such as contraction rates) of these methods in the presence of outliers has not been explicitly addressed. Another line of work on Bayesian robustness models contamination by either imposing heavy-tailed likelihoods, like the Student's t-distribution, on the outliers \citep{Svensen2005Robust-Bayesian00}, or by attempting to identify and remove them, as was done by \citet{Bayarri1994Robust-Bayesian00}. 

As we mentioned above, the approach followed in this work relies on a version of the median of means (MOM) principle to construct a robust proxy for the log-likelihood of the data and, consequently, a robust version of the posterior distribution. 
%The original MOM estimator was proposed by \cite{nemirovsky1983problem} and later, independently, by \cite{jerrum1986random,alon1999space}. Its versions and extensions were studied more recently by many authors including \citet{lerasle2011robust, lugosi2016risk, lecue2020robust2,minsker2020asymptotic}; we refer the reader to the surveys mentioned in the introduction for a more detailed literature overview. 
The idea of replacing the empirical log-likelihood of the data by its robust proxy appeared previously the framework of general Bayesian updating described by \cite{bissiri2016general}, where, given the data and the prior, the posterior is viewed as the distribution minimizing the loss expressed as the sum of a ``loss-to-data'' term and a ``loss-to-prior'' term. In this framework, \citet{jewson2018principles} adopted different types of f-divergences (such as the one corresponding to the Hellinger distance), to the loss-to-prior term to obtain a robust analogue of the posterior; this approach has been investigated further in \cite{knoblauch2019generalized}. Asymptotic behavior of related types of posteriors was studied by \cite{miller2021asymptotic}, though the framework in this paper is not limited to the parametric models; when applied in the classical parametric setting, the imposed regularity conditions turn out to be more restrictive than the ones required in the present work. Various extensions for this class of algorithms were suggested, among others, by \citet[][based on so-called ``robust disparities'']{hooker2014bayesian}, \citet[][]{ghosh2016robust}, 
%$\alpha$-density power divergence
\cite{nakagawa2020robust}, \citet[][]{bhattacharya2019bayesian}, all of which relied on $f$-divergences, \citet[][who used kernel Stein discrepancies in place of the log-likelihood]{matsubara2021robust}, and \citet[][based on the general ``scoring rules'']{pacchiardi2021generalized}. Yet another interesting idea for replacing the log-likelihood by its robust alternative, yielding the so-called ``$\rho$-Bayes'' posterior, was proposed and rigorously investigated by \cite{baraud2020robust}. However, sampling from the $\rho$-posterior appears to be computationally difficult.

In frequentist statistics, robustness is often understood as continuity of statistical functionals with respect to the total variation distance \cite{huber2004robust}. The adversarial contamination framework described in the introduction is a natural example of the perturbation in total variation norm. At the same time, model misspecification in Bayesian statistics is often treated with respect to the Kullback-Leibler divergence \citep{kleijn2012bernstein}, which is a much stronger notion of closeness between the probability measures; for example, this issue has been discussed by \citet{owhadi2015brittleness}. In the last decade, a number of papers made successful attempts at relaxing the assumptions for misspecification using the notions of $f$-divergences, most notably, the Helliger distance \cite[][section 4.1]{jewson2018principles}, \cite{hooker2014bayesian}, \cite[][which explicitly addresses the rates of contraction]{baraud2020robust}. Since the Hellinger distance can be bounded from above and below by the total variation distance \cite{tsyb3}, one can deduce meaningful results for the latter. The coarsened posterior framework of \citet{miller2015robust} can be defined with respect to the total variation distance, however, a practical version that has been developed still relies on the relative entropy. Our proposed approach, described in detail in the following section, allows one to control the contraction rates explicitly in terms of the number of corrupted observations, albeit, only in the parametric setting; see Theorem \ref{th:1} below for the formal statement. 
\subsection{Proposed approach.}
%#############################

Let $\theta'\in \Theta$ be an arbitrary fixed point in the relative interior of $\Theta$. Observe that the density of the posterior distribution $\frac{d\Pi_N(\theta\mid\m X_N)}{d\theta}$ is proportional to
$\pi(\theta)e^{-N\frac{\sum_{j=1}^N \l(\ell(\theta, X_j) -\ell(\theta',X_j)\r)}{N}}$; indeed, this is evident from equation \ref{eq:posterior} once the numerator and the denominator are divided by $\prod_{j=1}^n p_{\theta'}(X_j)$.  
The key idea is to replace the average $N^{-1}\sum_{j=1}^N \l(\ell(\theta, X_j) - \ell(\theta',X_j)\r)$
by its robust proxy denoted $\wh L(\theta)$ \footnote{Since $\theta'$ is fixed, we will suppress the dependence on $\theta'$ in the notation for brevity.} and defined formally in display \eqref{eq:lhat} below, which gives rise to the robust posterior distribution
\begin{equation}
\label{eq:rob-posterior}
\wh \Pi_N(B\lvert\m X_N) = \frac{\int_B \exp{-N\wh L(\theta)} \pi(\theta)d\theta}{\int_\Theta  \exp{-N\wh L(\theta)} \pi(\theta)d\theta}
\end{equation}
defined for all measurable sets $B\subseteq \Theta$. 
\begin{remark}
While it is possible to work with the log-likelihood $\ell(\theta,X)$ directly, it is often easier and more natural to deal with the increments $\ell(\theta, X) -\ell(\theta',X)$. For instance, in the Gaussian regression model with $X=(Y,Z)\in \mb R\times \mb R^d$, $Y=\theta^T Z+\eps$ with likelihood $p_{\theta}(y,z)\propto \exp{-\frac{(y-\theta^T z)^2}{\sigma^2}}\exp{-\frac{z^T \Sigma z}{2}}$ and $\theta'=0$, $\ell(\theta,(Y,Z)) - \ell(\theta',(Y,Z)) = \l( \theta^T Z \r)^2 - 2Y\cdot \theta^T Z$ which is more manageable than $\ell(\theta,(Y,Z))$ itself: in particular, the increments do not include the terms involving $Y^2$.
\end{remark}
Note that the density of $\wh \Pi_N(B \lvert \m X_N)$ is maximized for $\wh\theta_N = \argmin_{\theta \in \Theta} \wh L(\theta) - \frac{1}{N}\log \pi(\theta)$. For instance, if the prior $\Pi$ is the uniform distribution over $\Theta$, then $\wh \theta_N = \argmin_{\theta \in \Theta} \wh L(\theta)$ corresponds exactly to the robust risk minimization problem which, as we've previously mentioned, is hard due to non-convexity of the function $\wh L(\theta)$. At the same time, sampling from $\wh \Pi_N(B\lvert\m X_N)$ is possible, making the ``maximum a posteriori'' (MAP) estimator $\wh\theta$ as well as the credible sets associated with $\wh \Pi_N(B\lvert\m X_N)$ accessible. 
The robust risk estimator $\wh L(\theta)$ employed in this work is based on the ideas related to the \textit{median of means} principle. The original MOM estimator was proposed by \cite{nemirovsky1983problem} and later by \cite{jerrum1986random,alon1999space}. Its versions and extensions were studied more recently by many researchers including \cite{audibert2011robust,lerasle2011robust, brownlees2015empirical, lugosi2016risk, lecue2020robust2,minsker2020asymptotic}. 
We refer the reader to the survey \citep{lugosi2019mean} for a more detailed literature overview. 

Let $k \leq N / 2$ be a positive integer and $\l\{G_1, G_2, \ldots, G_k\r\}$ be $k$ disjoint subsets (``blocks'') of $\{1,2,\ldots,N\}$ of equal cardinality $\lvert G_j\rvert = n \geq N/k$, $j \in \{1,2,\ldots,k\}$. For every $\theta \in \Theta$, define the block average
\[
\Bar{L}_j(\theta) = \frac{1}{n}\sum_{i \in G_j}( \ell(\theta, X_i) - \ell(\theta',X_i)),
\]
which is the (increment of) empirical log-likelihood corresponding to the subsample indexed by $G_j$. Next, let $\rho : \mb{R} \mapsto \mb{R}^+$ be a convex, even, strictly increasing smooth function with bounded first derivative; for instance, a smooth (e.g. convolved with an infinitely differentiable kernel) version of the Huber's loss $H(x) = \min\l(x^2/2, \vert x \vert-1/2\r)$ is an example of such function. Furthermore, let $\{\Delta_n\}_{n\geq 1}$ be a non-decreasing sequence such that $\Delta_n\to\infty$ and $\Delta_n = o(\sqrt{n})$. \footnote{As a concrete example, it suffices to choose $\Delta_n\asymp \log(\log(n))$ for the results of the paper to hold. We discuss practical aspects of setting $\Delta_n$ in section \ref{section:numerical}.} Finally, define
\begin{equation}
\label{eq:lhat}
\wh L(\theta) := \argmin_{z \in \mb{R}} \sum_{j=1}^k\rho\l(\sqrt{n}\frac{\Bar{L}_j(\theta) - z}{\Delta_n}\r),
\end{equation}
which is clearly a solution to the convex optimization problem. 
Robustness and non-asymptotic performance of $\wh L(\theta)$ can be quantified as follows. Let $\sigma^2(\theta) = \mathrm{var}\l(\ell(\theta,X)-\ell(\theta',X)\r)$, and $\wt \Delta_n = \max(\sigma(\theta), \Delta_n)$; then for all $s$, and number of outliers $\m O$ such that $\max(s,\m O)\leq ck$ for some absolute constant $c>0$, 
\begin{align}
\label{eq:deviations1}
\l\lvert \wh L(\theta) - L(\theta) \r\rvert \leq \frac{\wt \Delta_n}{\Delta_n} \sigma(\theta)\sqrt{\frac{s}{N}} 
+ \wt \Delta_n\l( \frac{s+\m O}{k \sqrt n} +  \sqrt{\frac{k}{N}} o\l(1\r)\r)
\end{align}
with probability at least $1-2e^{-s}$, where $o(1)\to 0$ as $\max(\Delta_n,n)\to\infty$. Put simply, under very mild assumptions on $\ell(\theta,X)-\ell(\theta',X)$, $\wh L(\theta)$ admits sub-Gaussian deviations around $L(\theta)$, moreover, it can absorb the number of outliers that is of order $k$. We refer the reader to Theorem 3.1 in \citep{minsker2018uniform} for a uniform over $\theta$ version of this bound as well as more details. 

%\begin{remark}
Let us mention that the posterior distribution $\wh{\Pi}_N$ is a valid probability measure, meaning that
$\wh \Pi_N(\Theta\lvert\m X_N) =1 $. By the definition at display \eqref{eq:rob-posterior}, it suffices to show the denominator, $\int e^{-N \wh{L}(\theta)}\pi(\theta) d\theta$, is finite.  Indeed, note that $\wh{L}(\theta) > \wh{L}(\wt{\theta}_{N})$ for all $\theta$ where $\wt{\theta}_{N} = \argmin_{\theta\in \Theta} \wh L(\theta)$, hence
\begin{align*}
    \int e^{-N \wh{L}(\theta)}\pi(\theta) d\theta \leq e^{-N \wh{L}(\wt{\theta}_{N})}\int_\Theta \pi(\theta) d\theta.
\end{align*}
Therefore, a sufficient condition for $\int_\Theta e^{-N \wh{L}(\theta)} \pi(\theta)d\theta$ being finite is 
$\wh{L}(\wt{\theta}_{N}) > -\infty$ a.s. This is guaranteed by the fact that under mild regularity assumptions, for any $\theta, \theta' \in \Theta$, $\ell(\theta, x) - \ell(\theta', x)  > -\infty$, $P_{\theta_0}$ - almost surely.  
%\end{remark}
We end this section with a remark related to the smoothness assumptions required for the loss function $\rho$.
%technical remarks.
%Here, the function $\rho$ can be thought of as a ``smoothed'' version of $|x|$. 	
%##########
\begin{remark}
The classical MOM estimator corresponds to the choice $\rho(x)=\vert x \vert$ which is not smooth but is scale-invariant, in a sense that the resulting estimator does not depend on the choice of $\Delta_n$. 
While the latter property is often desirable, we conjecture that the posterior distribution based on such ``classical'' MOM estimator does not satisfy the Bernstein-von Mises theorem, and that smoothness of $\rho$ is important beyond being just a technical assumption. This somewhat surprising conjecture is so far only supported by our simulation results explained in Example \ref{ex:bvm_fails} below.
\end{remark}
\begin{example}
\label{ex:bvm_fails}
Let $\mathcal{X}_N = (X_1,\ldots, X_N)$ be i.i.d. with normal distribution $\mathcal{N}(\theta, 1)$, $\theta_0 = -30$ and the prior distribution for $\theta$ is $\mathcal{N}(-29.50, 1)$. Furthermore, let $\rho(x) = \vert x \vert$. 
We sample from the robust posterior distribution for the values of $k = 20, 40,60,80 $ and $n = \lfloor 1000/k \rfloor$. The resulting plots are presented in Figure \ref{fig:bvm_fails}. The key observation is that the posterior distributions are often multimodal and skewed, unlike the expected ``bell shape.'' This behavior persists even when the sample size is increased. 
\begin{figure}[ht]
\begin{center}
    \includegraphics[width = 0.8\textwidth, height = 7cm]{}
    \caption{Posterior distribution $\hat{\Pi}_N$ for Example \ref{ex:bvm_fails}. The dark blue curve is the density function and the light blue area represents the $95\%$ credible interval.}
    \label{fig:bvm_fails}
\end{center}
%\vskip -0.1in
\end{figure}
\end{example}

%################
\section{Main results.}
\label{sec:main}
%################

We are ready to state the main results related to the properties of the robust posterior distribution $\wh \Pi_N(\cdot \lvert \m X_N)$. 
First, we will state them in a way that avoids technical assumptions which can be found in the latter part of the section. We specify the exact requirements for each of our theorems in remark \ref{remark:assumptions}. Recall that $L(\theta) = \mb E\ell(\theta, X)$ where $\ell(\theta, x) = - \log p_\theta (x)$, and let $\sigma(\Theta):=\sup_{\theta\in \Theta}\mathrm{var}\l(\ell(\theta,X)\r)$ and $\wt \Delta = \max\l(\Delta_n, \sigma(\Theta)\r)$. 
Everywhere below, $\|\cdot\|$ stands for the Euclidean norm of a vector. 

The following theorem characterizes robustness properties of the posterior distribution $\wh \Pi_N(\cdot\lvert\m X_N)$ in terms the contraction rates towards the parameter $\theta_0$, as well as performance of the mode of $\wh \Pi_N(\cdot\lvert\m X_N)$ defined as 
\begin{equation}
\label{eq:mode}
\wh \theta_N=\argmin_{\theta \in \Theta} \wh L(\theta) - \frac{1}{N}\log \pi(\theta).
\end{equation}
\begin{theorem}
\label{th:1}
Under the appropriate regularity conditions on the function $\rho$, prior $\Pi$ and the family $\{p_\theta, \ \theta\in \Theta\}$, the following holds:
\begin{enumerate}
    \item 
    With probability at least $99\%$,
\[
\l\| \wh \theta_N - \theta_0 \r\|^2 \leq C\l( \wt \Delta \l( \frac{\m O+1}{k\sqrt n} + \l(\frac{k}{N}\r)^{\frac{1+\tau}{2}} \r) \r)+ O\l( \frac{1}{\sqrt N}\r)  
\]
as long as $\m O\leq ck$ for some absolute constants $c,C>0$. Here, $\tau\in (0,1]$ is a constant defined in assumption \ref{ass:3B} below.
%Here, $o(1)$ is a function that converges to $0$ as $n\to\infty$. 
%\footnote{The $O\l( \frac{1}{\sqrt N}\r)$ term can be improved; we discuss this briefly in the proof.}
    \item
    Let $\{\delta_N\}_{N\geq 1}$ be sequence such that $\delta_N \rightarrow 0$ and $N\delta_N^2 - \Delta_n\m O \sqrt{n}\to \infty$
    %$\frac{\sqrt{n}}{\Delta_n }\frac{k}{\m O+1}\delta^2_N\rightarrow \infty$
    as $N \rightarrow \infty$. Moreover, assume $\mathcal{O}$ Then
    %\[\frac{\sqrt{N}}{\mathcal{O}^{1/2}n^{1/4}\Delta_n^{1/2}}\delta_N \rightarrow \infty\]
    \begin{align*}
        \wh \Pi_N \l(\l\| \theta - \theta_0 \r\| \geq \delta_N \mid \mathcal{X}_N \r) \rightarrow 0
    \end{align*}
    in probability.
\end{enumerate}
\end{theorem}
The first part of the theorem implies that as long as the number of blocks containing outliers is not too large, the effect of these outliers on the estimation error is limited, regardless of their nature and magnitude; of course, the bound can be shown to hold with confidence arbitrary close to 1, instead of $99\%$, at the cost of increasing the constants. 
% to fully justify the title of the ``posterior distribution''
While the fact that the mode of $\wh \Pi_N$ is a robust estimator of $\theta_0$ is encouraging, one has to address the ability of the method to quantify uncertainty: the second part of the theorem is a result in this direction and states that $\wh \Pi_N$ ``contracts'' towards $\theta_0$, in other words, it assigns most of its mass to a small neighborhood of $\theta_0$. One the one hand, it implies that the credible sets associated with $\wh \Pi_N$ are robust. On the other hand, in the outlier-free framework ($\m O=0$), it implies that the credible sets have width of order $O(N^{-1/2})$, similar to the usual posterior distribution (indeed, in this case $\delta_N$ is required to satisfy the relations $\delta_N\to 0$ and $N\delta^2_N\to\infty$, so one can take $\delta_N = \beta_N N^{-1/2}$ where $\{\beta_N\}_{N\geq 1}$ is any increasing unbounded sequence).

Is it possible to give a better description of the asymptotic behavior of the posterior distribution, beyond specifying the rate of contraction towards $\theta_0$?  
%What if the data are free of contamination? 
We give a positive answer to this question in the contamination-free setting, and show that the robust posterior can ``adapt'' to such a favorable framework: specifically, it satisfies a version of the Bernstein-von Mises theorem. Let 
\begin{equation}
\label{eq:theta-tilde}
    \wt{\theta}_{N} = \argmin_{\theta\in \Theta} \wh L(\theta).
\end{equation}
It can be viewed as a robust proxy of the MLE or the mode of the posterior distribution corresponding to the uniform prior on $\Theta$.
%The second part of the theorem states that the posterior distribution, regardless of the nature of the outliers, concentrate on $\theta_0$ with radius of order $\frac{\mathcal{O}^{1/2}n^{1/4}\Delta_n^{1/2}}{\sqrt{N}}$, instead of the $\frac{1}{\sqrt{N}}$ radius implied by the Gaussian covariance in the outlier-free Bernstein-von Mises theorem. The extra factors on the numerator are the effect of outliers. However, as long as the cardinality of the outliers are not too large, the posterior still concentrates on the truth asymptotically.
\begin{theorem}
\label{th:2}
Assume the outlier-free framework. Under appropriate regularity conditions on the prior $\Pi$ and the family 
$\{p_\theta, \ \theta\in \Theta\}$, 
\begin{align*}
\left\|\wh{\Pi}_N(\cdot\lvert\m X_N) - \mathcal{N}\left(\wt{\theta}_{N}, \frac{1}{N}\l(\partial_\theta^2 L(\theta_0)\r)^{-1}\right)\right\|_{\mathrm{TV}} \overset{P}{\longrightarrow} 0.
\end{align*}
Moreover, $\sqrt{N}\l( \wt{\theta}_{N} - \theta_0 \r) \overset{d}{\longrightarrow} \mathcal{N} \l(0, \l(\partial^2_\theta L(\theta_0)\r)^{-1}I(\theta_0)\l(\partial^2_\theta L(\theta_0)\r)^{-1}\r)$.
% whenever $\Delta_n\to\infty$. 
\end{theorem}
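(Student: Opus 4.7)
\textbf{Proof proposal for Theorem \ref{th:2}.} The strategy is to adapt the classical Bernstein--von Mises argument to the implicitly defined M-estimator $\wh L(\theta)$. The structural fact I would establish first is a local quadratic expansion of $\wh L$ around $\wt\theta_N$ whose Hessian converges in probability to the Fisher information $I(\theta_0)$. Granted such an expansion, the total-variation statement follows from a standard Laplace/Scheff\'e argument, while the asymptotic normality of $\wt\theta_N$ is a one-step consequence of the first-order condition combined with a central limit theorem for the block gradients.

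Differentiating the defining first-order relation
\[
\sum_{j=1}^k \rho'\!\l(\sqrt{n}\,\tfrac{\bar L_j(\theta)-\wh L(\theta)}{\Delta_n}\r)=0
\]
in $\theta$ and using $\rho''>0$, the implicit function theorem yields
\[
\nabla\wh L(\theta)=\frac{\sum_j w_j(\theta)\,\nabla\bar L_j(\theta)}{\sum_j w_j(\theta)},\qquad w_j(\theta):=\rho''\!\l(\sqrt{n}\,\tfrac{\bar L_j(\theta)-\wh L(\theta)}{\Delta_n}\r),
\]
and an analogous (longer) expression for $\nabla^2\wh L$. In a $1/\sqrt N$-neighborhood of $\theta_0$, each increment $\bar L_j(\theta)-\wh L(\theta)$ is $O_P(1/\sqrt n)$ uniformly in $j$ (the block mean is a sum of $n$ i.i.d.\ summands and $\wh L(\theta)-L(\theta)$ is controlled by \eqref{eq:deviations1}), so the argument of $\rho''$ is $O_P(1/\Delta_n)=o_P(1)$. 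Consequently $w_j(\theta)\to\rho''(0)$ uniformly and the weighted block averages reduce to ordinary averages; the classical LLN/CLT then give $\nabla^2\wh L(\wt\theta_N)\overset{P}{\longrightarrow} I(\theta_0)$ and $\sqrt N\,\nabla\wh L(\theta_0)\overset{d}{\longrightarrow}\mathcal{N}(0,I(\theta_0))$, where the covariance is identified via the standard information identity $\mathrm{Var}(\nabla\ell(\theta_0,X))=I(\theta_0)$.

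With the resulting quadratic expansion
\[
N\bigl(\wh L(\theta)-\wh L(\wt\theta_N)\bigr)=\tfrac12\,N(\theta-\wt\theta_N)^T I(\theta_0)(\theta-\wt\theta_N)+r_N(\theta),
\]
where $r_N$ is uniformly small on compacts in $h=\sqrt N(\theta-\wt\theta_N)$, the BvM argument follows the textbook template. On the local ball $\|h\|\le M$, the unnormalized posterior density $\pi(\wt\theta_N+h/\sqrt N)\exp{-N[\wh L(\wt\theta_N+h/\sqrt N)-\wh L(\wt\theta_N)]}$ converges pointwise to $\pi(\theta_0)\exp{-\tfrac12 h^T I(\theta_0) h}$, and the convergence is upgraded to $L^1$ via dominated convergence using a Gaussian-type envelope produced by the convexity of $\rho$. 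The tail contribution from $\|\theta-\wt\theta_N\|>\delta$ is handled by combining the uniform-in-$\theta$ deviation bound \eqref{eq:deviations1}---Theorem 3.1 of \cite{minsker2018uniform}---with the identifiability hypothesis $\inf_{\|\theta-\theta_0\|\ge\delta}(L(\theta)-L(\theta_0))>0$, which bounds $N[\wh L(\theta)-\wh L(\wt\theta_N)]$ below by a positive multiple of $N$ on that region with high probability. Scheff\'e's lemma then converts pointwise density convergence into the stated TV convergence.

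For the asymptotic normality of $\wt\theta_N$, the mean value theorem applied to $\nabla\wh L(\wt\theta_N)=0$ gives $\sqrt N(\wt\theta_N-\theta_0)=-[\nabla^2\wh L(\bar\theta)]^{-1}\sqrt N\,\nabla\wh L(\theta_0)$ for some $\bar\theta$ between $\theta_0$ and $\wt\theta_N$; consistency of $\wt\theta_N$ follows from the same uniform deviation bound, and Slutsky's theorem combined with the two limits above produces the $\mathcal{N}(0,I^{-1}(\theta_0))$ limit. The step I expect to be the main obstacle is the \emph{uniform} control of the weights $w_j(\theta)$ and of the quadratic remainder $r_N(\theta)$---not merely pointwise at $\theta_0$ but simultaneously over the local ball and all blocks $j\le k$. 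This requires a uniform-in-$(\theta,j)$ version of the statement $\sqrt n(\bar L_j(\theta)-\wh L(\theta))/\Delta_n\to 0$, for which I would combine a maximal inequality for the block averages with the Lipschitz-type control furnished by Theorem 3.1 of \cite{minsker2018uniform}; once this uniform approximation is in place the remainder of the argument is routine.
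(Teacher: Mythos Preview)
Your overall architecture---local quadratic expansion plus a tail estimate via the uniform deviation bound, followed by Scheff\'e---matches the paper, and your handling of the region $\|\theta-\theta_0\|>\delta$ and of the asymptotic normality of $\wt\theta_N$ is essentially what the paper does. The gap is in the local step, precisely where you flag it. You want $\max_{j\le k}\sqrt n\,\lvert\bar L_j(\theta)-\wh L(\theta)\rvert/\Delta_n\to 0$ in probability so that all weights $w_j$ collapse to $\rho''(0)$ and the weighted gradient becomes the ordinary sample gradient. Under the paper's assumptions this is false in general. The only moment hypothesis is $\mb E\lvert\ell(\theta,X)-L(\theta)\rvert^{2+\tau}<\infty$, while $\Delta_n\to\infty$, $\Delta_n=o(\sqrt n)$ and $k=o(n^\tau)$ are the sole growth restrictions. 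For a \emph{single} block, $\sqrt n(\bar L_j-L)$ is asymptotically $\m N(0,\sigma^2(\theta))$, so the rescaled argument is $O_P(1/\Delta_n)$ there; but a union bound under the $(2+\tau)$-moment gives at best $\pr{\max_{j\le k}\sqrt n\,|\bar L_j-L|>\eps\Delta_n}\lesssim k\,\Delta_n^{-(2+\tau)}$, and nothing forces $k=o(\Delta_n^{2+\tau})$ (take $\Delta_n=\log n$ and $k\asymp n^{\tau/2}$). A non-vanishing fraction of blocks therefore lands outside the region where $\rho''\equiv\rho''(0)$---this is exactly the heavy-tailed regime the MOM construction is designed for---so the reduction ``weighted block averages become ordinary averages'' does not go through uniformly, and the route to $\partial_\theta^2\wh L\to I(\theta_0)$ via block-wise analysis is blocked. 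The maximal inequality you invoke does not help: Theorem~3.1 in \cite{minsker2018uniform} controls the aggregate $\wh L-L$, not individual block means.

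The paper sidesteps block-by-block analysis and never establishes $\partial_\theta^2\wh L\to I(\theta_0)$. Its Lemma~\ref{lemma:taylor_expansion} expands $\wh L$ around $\theta_0$ with the \emph{deterministic} Hessian $\partial_\theta^2 L(\theta_0)$ already inserted in the quadratic term,
\[
\wh L(\theta)=\wh L(\theta_0)+(\theta-\theta_0)^T\partial_\theta\wh L(\theta_0)+\tfrac12(\theta-\theta_0)^T\partial_\theta^2 L(\theta_0)(\theta-\theta_0)+R_1(\theta)+R_2(\theta),
\]
so that the remainder $R_1$ depends only on the first-derivative increment $r_N(\theta)=(\partial_\theta L-\partial_\theta\wh L)(\theta)-(\partial_\theta L-\partial_\theta\wh L)(\theta_0)$, which is controlled by the asymptotic equicontinuity Lemma~\ref{lemma:equicont} (imported from \cite{minsker2020asymptotic}), while $R_2$ is just the Taylor remainder of $\partial_\theta L$. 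This equicontinuity is an \emph{aggregate} statement about $\wh L$ that tolerates a small number of bad blocks and requires no control of individual $w_j$. The recentering from $\theta_0$ to $\wt\theta_N$ is supplied afterward by Lemma~\ref{lemma:technical1}, and the integral is handled on three shells $A_N^1,A_N^2,A_N^3$ (with the middle shell using Lemma~\ref{lemma:negligible_remainder} to choose a growing radius on which $R_1+R_2$ is still negligible). To repair your outline, replace the weight-convergence argument by this equicontinuity device and use $\partial_\theta^2 L(\theta_0)$, not $\partial_\theta^2\wh L(\wt\theta_N)$, in the quadratic term.
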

The message of this result is that in the ideal, outlier-free scenario, $\wh\Pi_N$ inherits the properties of the standard posterior distribution $\Pi_N$. In particular, in the ``well-specified'' case $p = p_{\theta_0}$ for some $\theta_0\in \Theta$, 
$\partial_\theta^2 L(\theta_0) = I(\theta_0)$ under mild regularity assumptions, implying that the credible sets associated with the posterior distribution are asymptotically valid confidence intervals.
%it is well known that  and one is further willing to assume the regularity assumptions such that the twice differentiability and expectation can be exchangeable at $\theta_0$, i.e., $\partial^2_\theta L(\theta_0) = \mb E \l(\partial^2_\theta \ell (\theta_0) \r) = - I(\theta_0)$, the credible sets associated with the posterior distirbution are asymptotically valid confidence intervals.
%in the ideal, outlier-free scenario, the robust posterior distribution $\wh \Pi_N$ behaves asymptotically like the usual posterior distribution $\Pi_N$. 
In the misspecified model scenario, that is, when $p \notin \{p_\theta : \theta \in \Theta\}$), the asymptotic covariance of $\tilde \theta_N$ is the so-called ``sandwich covariance.'' In general, it is different from the asymptotic covariance of the posterior distribution, implying that $\wh\Pi_N$ can either underestimate or overestimate the uncertainly. This behavior is consistent with performance of the standard posterior $\Pi_N$ that has been investigated and discussed in detail by \citet{kleijn2012bernstein,kleijn2006misspecification,panov2015finite}. 

%Technical requirements include a condition on the growth of the number of blocks of data, namely $k=o(n^\tau)$ for some $\tau\in(0,1]$ defined below. 
The main novelty in Theorem \ref{th:2} is the first, ``BvM part,'' while the asymptotic normality of $\wt \theta_N$ has been previously established in \citep{minsker2020asymptotic}.

We finish this section by listing and discussing the complete list of regularity conditions that are required for the stated results to hold. Recall that $\|\cdot\|$ stands for the standard Euclidean norm. 
\begin{assumption}
\label{ass:1}
The function $\rho: \mb R\mapsto \mb R_+$ is convex, even, and such that
%The function $\rho: \mb R\mapsto \mb R$ is defined by
%\begin{align*}
%      \rho(x)=\begin{cases}
%               \rho^+(x), \quad x \geq 0\,,\\
%               \rho^+(-x), \text{ }  x < 0\,,
%            \end{cases}
%\end{align*}
%where $\rho^+ : [0, \infty) \mapsto \mb R$ is a convex, strictly increasing function such that 
%and such that
\begin{enumerate}
\item[(i)] $\rho'(z)=z$ for $\vert z \vert \leq 1$ and $\rho'(z)=\mathrm{const}$ for $\vert z \vert \geq 2$.
\item[(ii)] $z - \rho'(z)$ is nondecreasing on $R_+$; 
\item[(iii)] $\rho^{(5)}$ is bounded and Lipschitz continuous.
\end{enumerate}
\end{assumption}
One example of such $\rho$ is the smoothed Huber's loss: let
\begin{align*}
    H(z) = \frac{z^2}{2}\mb{I}\l\{\vert z \vert\leq 3/2\r\} + \frac{3}{2}\l(\vert z \vert - \frac{3}{4}\r)\mb{I} \l\{\vert z \vert > 3/2\r\}\,.
\end{align*}
Moreover, set  $\psi(z) = Ce^{-\frac{4}{1 - 4z^2}}\mb{I}\l\{\vert z \vert \leq \frac{1}{2}\r\}$. 
Then $\rho(z) = (H \star \psi)(z)$, where $\star$ denotes the convolution, satisfies assumption \ref{ass:1}. 
Condition (iii) on the higher-order derivatives is technical in nature and can likely be avoided at least in some examples; in numerical simulations, we did not notice the difference between results based on the usual Huber's loss and its smooth version. Next assumption is a standard requirement related to the local convexity of the loss function $L(\theta)$ at its global minimum $\theta_0$. 
\begin{assumption}
\label{ass:2}
The Hessian $\pd^2_\theta L(\theta_0)$ exists and is strictly positive definite. 
\end{assumption}
In particular, this assumption ensures that in a sufficiently small neighborhood of $\theta_0$, $c(\theta_0)\|\theta - \theta_0\|^2 \leq L(\theta) - L(\theta_0) \leq C(\theta_0)\|\theta - \theta_0\|^2$ for some constants $0 < c(\theta_0) \leq C(\theta_0) < \infty$. 
The following two conditions allow one to control the ``complexity'' of the class $\{\ell(\theta,\cdot), \ \theta\in\Theta\}$. 
\begin{assumption}
\label{ass:3}
For every $\theta\in \Theta$, the map $\theta'\mapsto\ell(\theta',x)$ is differentiable at $\theta$ for $P$-almost all $x$ (where the exceptional set of measure $0$ can depend on $\theta$), with derivative $\pd_\theta \ell(\theta,x)$. Moreover, $\forall \theta\in \Theta$, the envelope function
$\m V(x{;}\delta):=\sup_{\|\theta' - \theta\|\leq\delta} \l\| \pd_\theta \ell(\theta',x)\r\|$ of the class $\l\{ \pd_\theta \ell(\theta',\cdot): \|\theta'-\theta\|\leq \delta\r\}$ satisfies $\mb E \m V^{2+\tau}(X;\delta)<\infty$ for some $\tau\in(0,1]$ and a sufficiently small $\delta=\delta(\theta)$. 
\end{assumption}
An immediate implication of this assumption is the fact that the function $\theta\mapsto \ell(\theta,x)$ is locally Lipschitz. It other words, for any $\theta\in \Theta$, there exists a ball $B(\theta,r(\theta))$ of radius $r(\theta)$ such that for all $\theta', \, \theta''\in B(\theta,r(\theta))$ 
\[
\lvert \ell(\theta',x) - \ell(\theta'',x)\rvert \leq \m V(x{;}\delta)\|\theta' - \theta''\|.
\]
In particular, this condition suffices to prove consistency of the estimator $\wt\theta_N$. 

\begin{comment}
The Lipschitz type continuity condition in Assumption \ref{ass:3} implies the uniform boundedness of $\partial_\theta \ell(\theta, x)$. That is, the envelope function
\[
\m V(x;\delta):=\sup_{\|\theta - \theta_0\|\leq\delta} \l\| \pd_\theta \ell(\theta,x)\r\|
\] 
of the class $\l\{ \pd_\theta \ell(\theta,\cdot): \|\theta-\theta_0\|\leq \delta\r\}$ satisfies $\mb E \m V^{2+\tau}(X;\delta)<\infty$ for some $\tau\in(0,1]$ and a sufficiently small $\delta$. Details are presented in the supplementary materials. 
\end{comment}

The following condition is related to the modulus of continuity of the empirical process indexed by the gradients $\partial_\theta \ell(\theta,x)$. It is similar to the typical assumptions required for the asymptotic normality of the MLE, such as Theorem 5.23 in the book by \citet{van2000asymptotic}. Define
\begin{align*}
    \omega_N(\delta) = \mb E \sup_{\|\theta - \theta_0\| \leq \delta } \l\| \sqrt{N}  \l(P_N - P\r)(\partial_\theta \ell(\theta, \cdot) - \partial_\theta \ell(\theta_0, \cdot))\r\|\,,
\end{align*}
where $P_N$ is the empirical distribution by $\tilde{\m X}_N$.
\begin{assumption}
\label{ass:3B}
The following relation holds: 
\[
\lim\limits_{\delta\to 0}\limsup_{N\to\infty} \omega_N(\delta) = 0.
\]
Moreover, the number of blocks $k$ satisfies $k=o(n^\tau)$ as $k,n\to\infty$, where $\tau\in (0,1]$ is the same constant as in assumption \ref{ass:3}.
\end{assumption}
Limitation on the growth of $k$ is needed to ensure that the bias of the estimator $\wh L(\theta)$ is of order $o(N^{-1/2})$, a fact that we rely on in the proof of Theorem \ref{th:2} as well second part of Theorem \ref{th:1}. In many scenarios, $\tau$ can be chosen to be $1$, whence the requirement $k=o(n)$ is equivalent to $k=o\l(\sqrt{N}\r)$. 
Finally, we state a mild requirement imposed on the prior distribution; it is only slightly more restrictive than its counterpart in the classical BvM theorem (for example, Theorem 10.1 in the book by \citet{van2000asymptotic}).
\begin{assumption}
\label{ass:5}
The density $\pi$ of the prior distribution $\Pi$ is positive and bounded on $\Theta$, and is continuous on the set $\{\theta: \|\theta - \theta_0\| < c_{\pi}\}$ for some positive constant $c_{\pi}$.
\end{assumption}
\begin{remark}
\label{remark:assumptions}
Part (1) of Theorem \ref{th:1} relies on all assumption besides assumption \ref{ass:3B}, while part (2) of Theorem \ref{th:1} and Theorem \ref{th:2} require all assumptions to hold. 
\end{remark}
\begin{remark}
Most commonly used families of distributions satisfy assumptions \ref{ass:2}-\ref{ass:3B} with $\tau=1$. For example, this is easy to check for the normal, Laplace or  Poisson families in the location model where $p_\theta(x) = f(x-\theta), \ \theta \in \Theta$. Other examples covered by our assumptions include linear regression with Gaussian or Laplace-distributed noise. The main work is usually required to verify assumption \ref{ass:3B}; it relies on the standard tools for the bounds on empirical processes for classes that are Lipschitz in parameter (this case is covered by Lemma \ref{lemma:sup-power} in the appendix) or have finite Vapnik-Chervonenkis dimension. Examples of the latter can be found in the books by \citet{van2000asymptotic} and \citet{van1996weak}.
\end{remark}

%#############################################
\section{Numerical examples and applications.}
\label{section:numerical}
%#############################################

We will illustrate our theoretical findings by presenting numerical examples below. The loss function that we use is Huber's loss defined before. While, strictly speaking, it does not satisfy the smoothness requirements, we found that it did not make a difference in our simulations. Algorithm for sampling from the posterior distributions was based on the ``No-U-Turn sampler'' variant of Hamiltonian Monte Carlo method \citep{hoffman2014no}. Robust estimator of the log-likelihood $\wh{L}(\theta)$ are approximated via the gradient descent algorithm at every $\theta$. 

The parameter $\Delta_n$ that is required in the construction of the estimator $\Delta_n$ is a proxy for the standard deviation of $\ell(\theta,X) - \ell(\theta',X)$. The initial choice $\Delta_n:=\Delta_{n,0}$ is a fixed number (e.g., $\Delta_{n,0} = 1)$, and $\theta'$ is also chosen arbitrarily. Given these choices and an initial sample $\wt\theta_1,\ldots,\wt\theta_t$ from the posterior $\wh\Pi_N$ (for instance, $t=50$), we set $\theta'$ to be the geometric median of this sample, that is, 
$\theta' = \argmin_{\theta\in \Theta} \sum_{i=1}^t \|\theta-\wt\theta_i\|$.  
Then we update $\Delta_n$ via computing a robust estimator, namely, the median absolute deviation, of the standard deviation of $\ell(\theta,X)- \ell(\theta',X)$. To this end, set $\widehat M(\theta):=\mathrm{median}\l(\bar L_1(\theta),\ldots,\bar L_k(\theta) \r)$, and
\[
\mathrm{MAD}(\theta) = \mathrm{median}\l( \lvert \bar L_1(\theta) -\wh M(\theta)\rvert, \ldots,\lvert \bar L_k(\theta) - \widehat M(\theta)\rvert \r).
\]
Finally, define $\wh \Delta_{n} := \frac{\mathrm{MAD}(\theta}{\Phi^{-1}(3/4)}$, where $\Phi$ is the distribution function of the standard normal law and the normalizing factor comes from the fact that for a sample from the normal distribution $N(\mu,\sigma^2)$, the expected value of $\mathrm{MAD}$ equals $\Phi^{-1}(3/4)\sigma$. The data-dependent choice $\wh\Delta_n$ works well in numerical simulations.

In both examples below, we use Gaussian likelihoods that satisfy assumptions \ref{ass:2}-\ref{ass:3B} required by our theory. The following example provides empirical evidence of the fact that using Huber's loss in the framework of example \ref{ex:bvm_fails} is enough for BvM theorem to hold. 
\begin{example}
\label{ex:2}
We consider two scenarios: in the first one, the data are $N = 1000$ i.i.d. copies of $\mathcal{N}(-30, 1)$ random variables. In the second scenario, data are generated in the same way except that $40$ randomly chosen observations are replaced with $40$ i.i.d. copies of $\mathcal{N}(10^4,1)$ distributed random variables. Results are presented in figures 
\ref{fig:correct_hat_pi_no_outlier} and \ref{fig:correct_hat_pi_outlier}, where the usual posterior distribution is plotted as well for comparison purposes. 
The main takeaway from this simple example is that the proposed method behaves as expected: as long as the number of blocks $k$ is large enough, robust posterior distribution concentrates most of its ``mass'' near the true value of the unknown parameter, while the usual posterior distribution is negatively affected by the outliers. 
At the same time, in the outlier-free regime, both posteriors perform similarly. 
\begin{figure}[ht]
\begin{center}
    \includegraphics[width = 0.8\textwidth, height = 5cm]{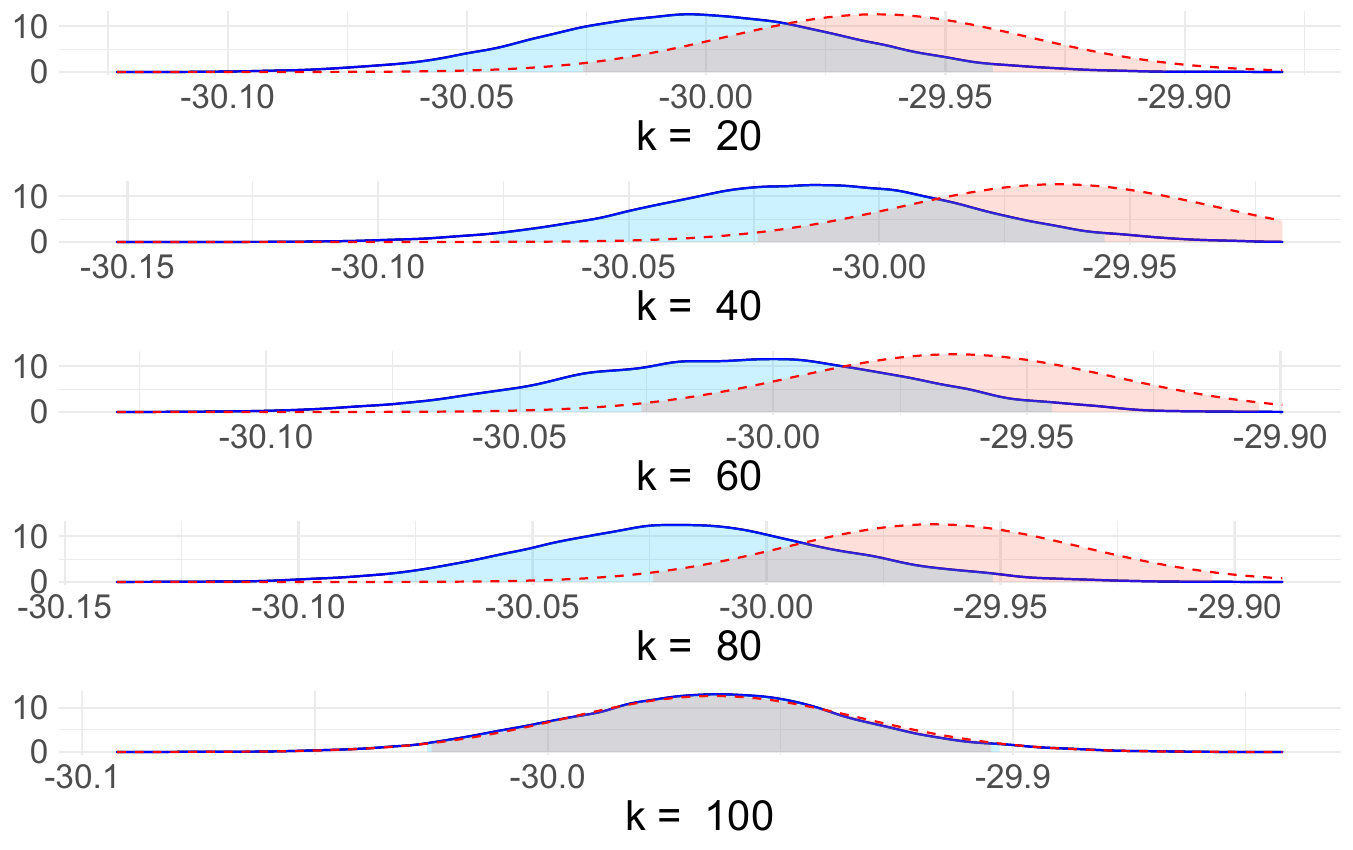}
    \caption{Posterior distribution $\hat{\Pi}_N$ for Example \ref{ex:2}, scenario 1. The blue curves and blue shaded regions correspond to the density function and $95\%$ credible sets of $\hat{\Pi}_N$ whereas dashed red curves and red shaded region are the standard posterior and its corresponding $95\%$ credible set.}
    \label{fig:correct_hat_pi_no_outlier}
    \vspace{1pc}
      \includegraphics[width = 0.8\textwidth, height = 5cm]{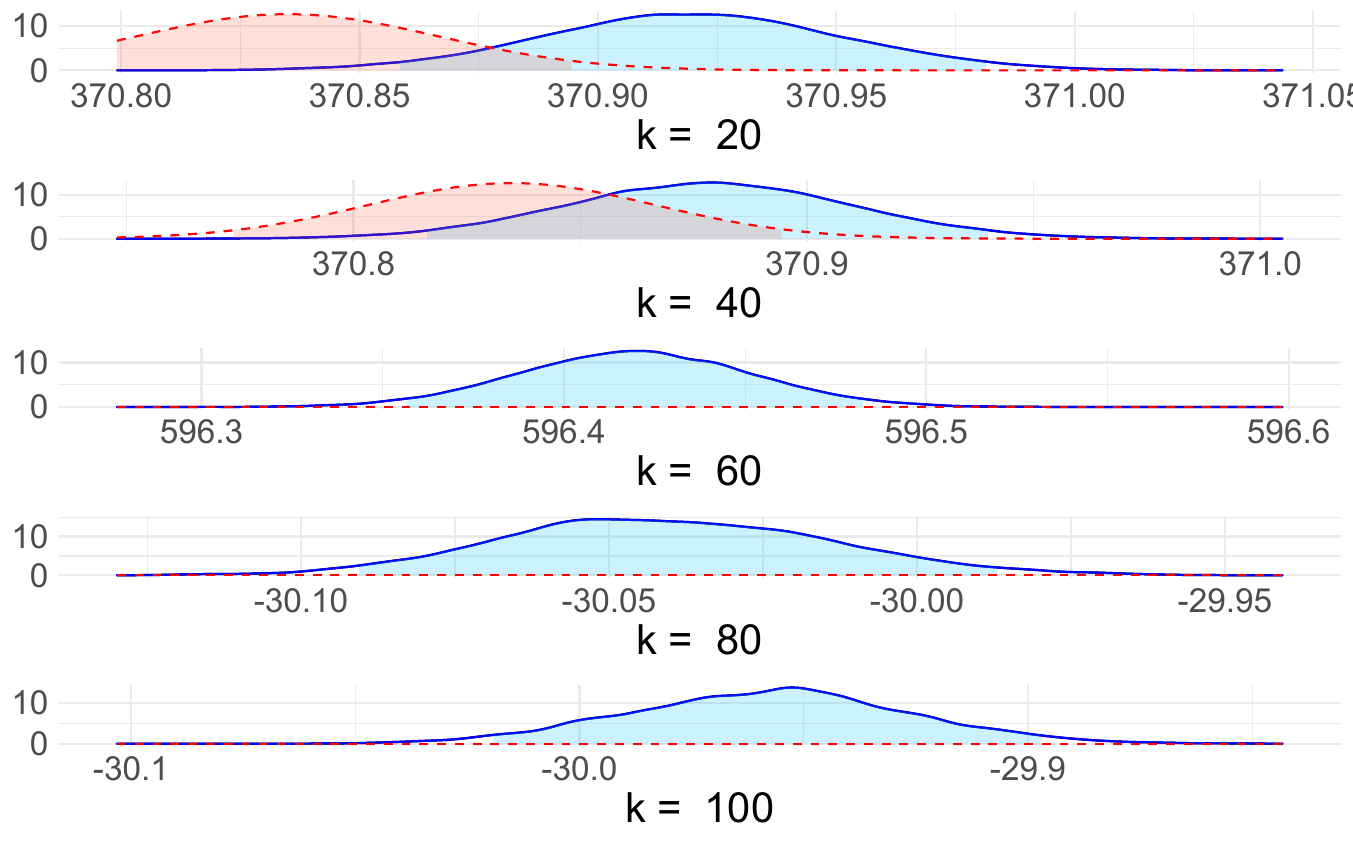}
    \caption{Posterior distribution $\hat{\Pi}_N$ for Example \ref{ex:2}, scenario 2.}
    \label{fig:correct_hat_pi_outlier}
\end{center}
\end{figure}
\end{example}
\begin{example}
\label{ex:3}
In this example, we consider a non-synthetic dataset in the linear regression framework. 
The dataset in question was provided by \cite{cortez2009modeling} and describes the qualities of different samples of red and white wine. It contains $11$ ``subjective'' features such as fixed acidity, pH, alcohol, etc., and one ``objective'' feature, the scoring of wine quality; $4898$ white wine samples are selected to perform the linear regression where the objective feature is the response and the subjective features are the regressors. It is assumed that the data is sampled from the model 
%Furthermore, the dataset, after standardizing, is used for Bayesian linear regression, i.e.,
\begin{align*}
        Y = \beta_0 + \beta_1X_1 + \beta_2X_2 + \ldots \beta_{8}X_{8} + \varepsilon\,,
\end{align*}
where $\beta_0$ is the intercept, $Y$ is the response, $X_1, X_2, \ldots, X_8$ are the chosen regressors (see detailed variable names in Table \ref{tbl:real_data}) along with the corresponding coefficients $\beta_1, \beta_2, \ldots, \beta_8$, and $\varepsilon$ is the random error with $N(0, \sigma^2)$ distribution. Here we remark that, for simplicity only $8$ out of $11$ ``subjective'' features are selected such that this model agrees with the OLS linear regression model generated by best subset selection with minimization of BIC. 
%As a benchmark, we use the corresponding OLS model, see the details in Table \ref{tbl:OLS}. 
%For simplicity, we assume $\sigma$ to be known and use its estimated value of $0.848$. 
In the second experiment, $10$ randomly chosen response variables are replaced with $\mathcal{N}(1000, 10)$ random outliers. 
In both cases, the priors for $\beta_j$ are set to be $N(0, 10^2)$ for every $j$, and the prior for $\sigma$ is the uniform distribution on $(0, 1]$. The block size $n$ is set to be $158$ and the number of blocks $k$ is $31$. The MAP estimates of $\beta_j$'s and $\sigma$, as well as the two end points of the $95\%$ credible intervals are reported in Table \ref{tbl:real_data}. These plots yet again demonstrate that the posterior $\wh \Pi_N$, unlike its standard version, shows stable behavior when the input data are corrupted.
\begin{table}[h]
\caption{MAP estimates of the intercept, regression coefficients and the standard deviation $\sigma$, left and right end points of $95\%$ credible intervals in parentheses.}
\label{tbl:real_data}
\centering
\begin{tabular}{|c|c|}
\hline
\multicolumn{1}{|c|}{variable name} & \multicolumn{1}{c|}{$\wh{\Pi}_N$} \\ \hline
intercept & $-0.002(-0.026, 0.022)$ \\ \hline
fixed.acidity & $0.065(0.027, 0.101)$ \\ \hline
volatile.acidity & $-0.207(-0.232, -0.183)$ \\ \hline
residual.sugar & $0.453(0.381, 0.536)$ \\ \hline
free.sulfur.dioxide & $0.077(0.051, 0.102)$ \\ \hline
density & $-0.487(-0.600, -0.372)$ \\ \hline
pH & $0.125(0.093, 0.160)$ \\ \hline
sulphates & $0.075(0.049, 0.101)$ \\ \hline
alcohol & $0.287(0.227, 0.350)$ \\ \hline
$\sigma$ & $0.852(0.835, 0.868)$ \\ \hline
\end{tabular}
\end{table}
\begin{figure}[ht]
\begin{center}
    \includegraphics[width = 0.8\textwidth, height = 7.5cm]{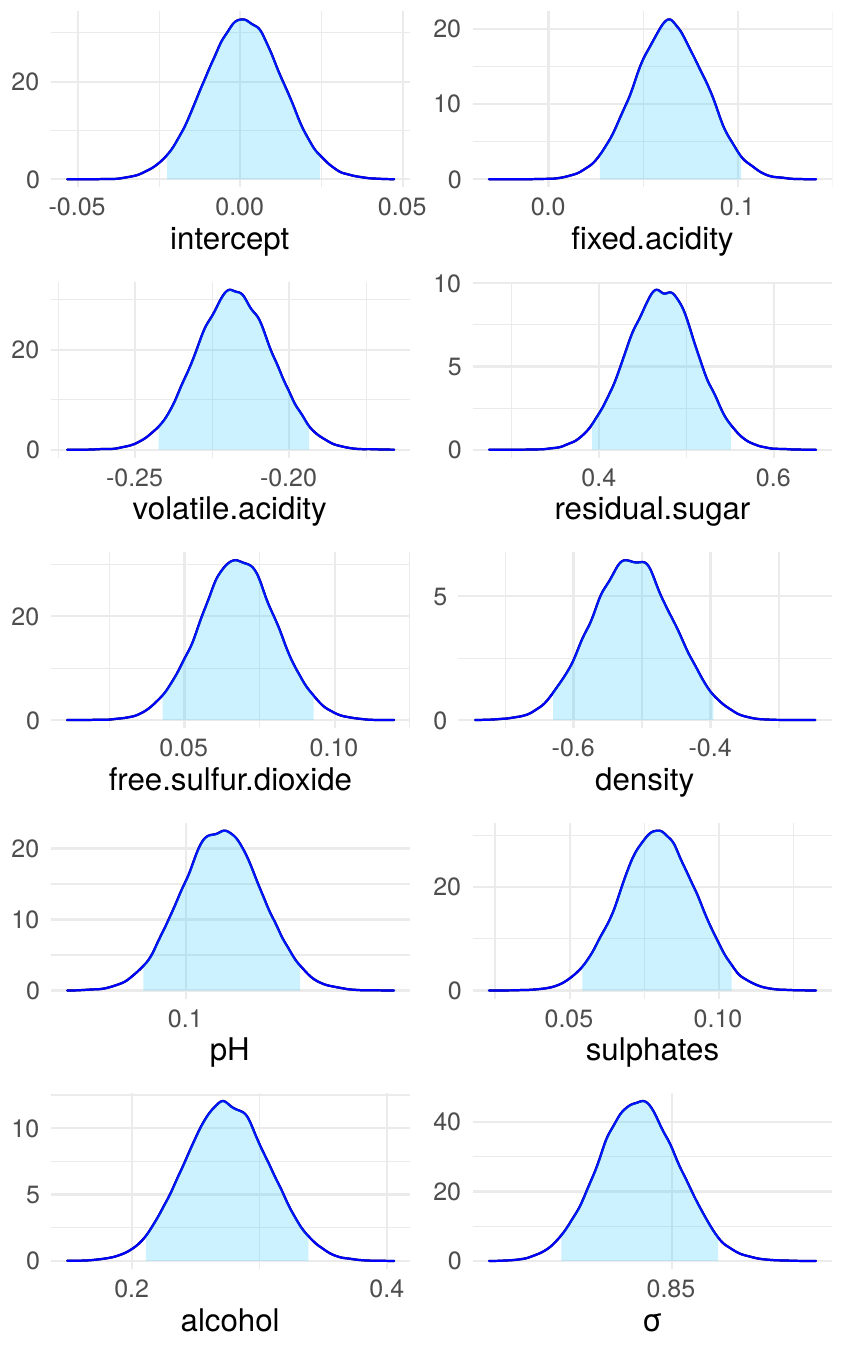}
    \caption{Posterior distribution $\wh{\Pi}_N$ for Example \ref{ex:3}, no outliers.}
    \label{fig:real_data_1}
    \includegraphics[width = 0.8\textwidth, height = 7.5cm]{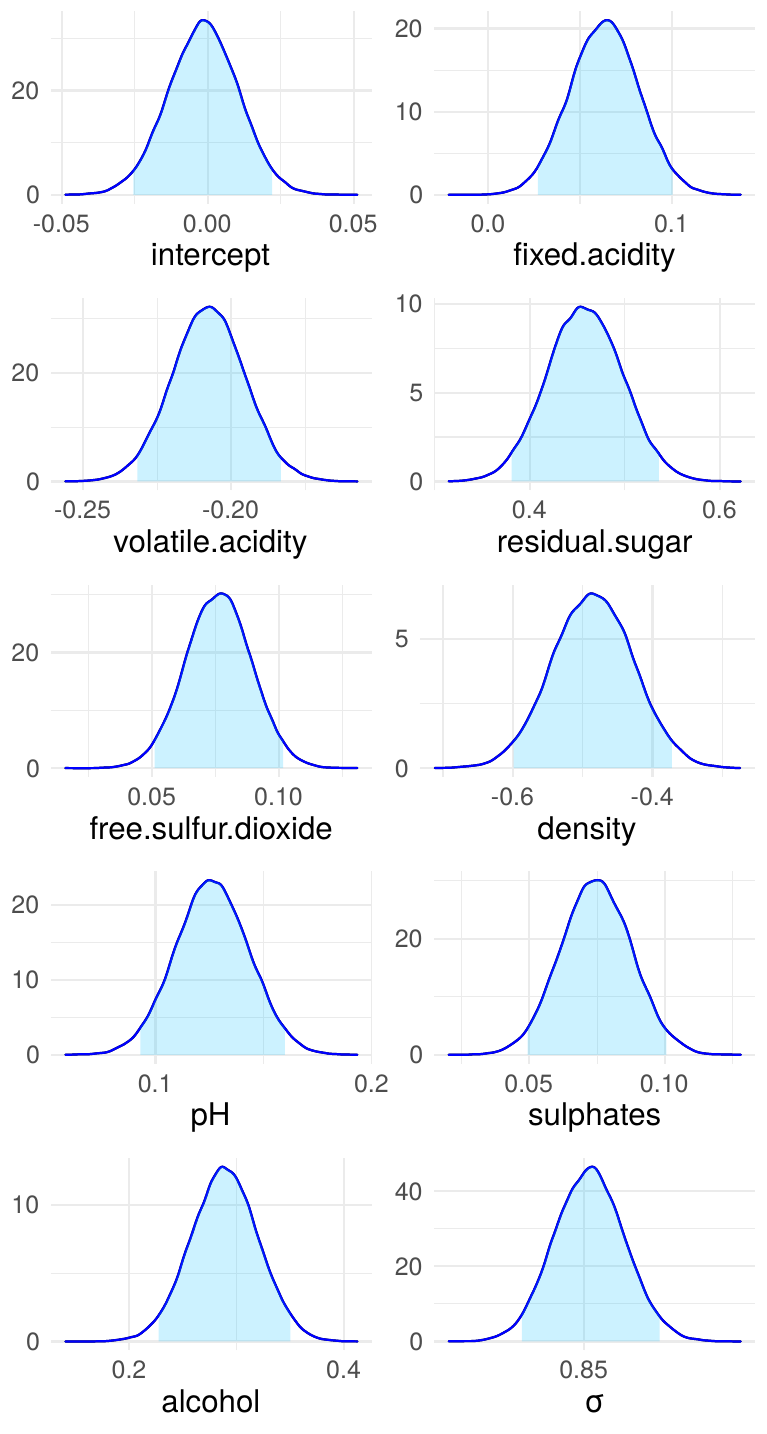}
    \caption{Posterior distribution $\wh{\Pi}_N$ for Example \ref{ex:3}, with outliers.}
    \label{fig:real_data_1_outlier}
\end{center}
\end{figure}
\begin{figure}
\begin{center}
  \includegraphics[width = 0.8\textwidth, height = 8cm]{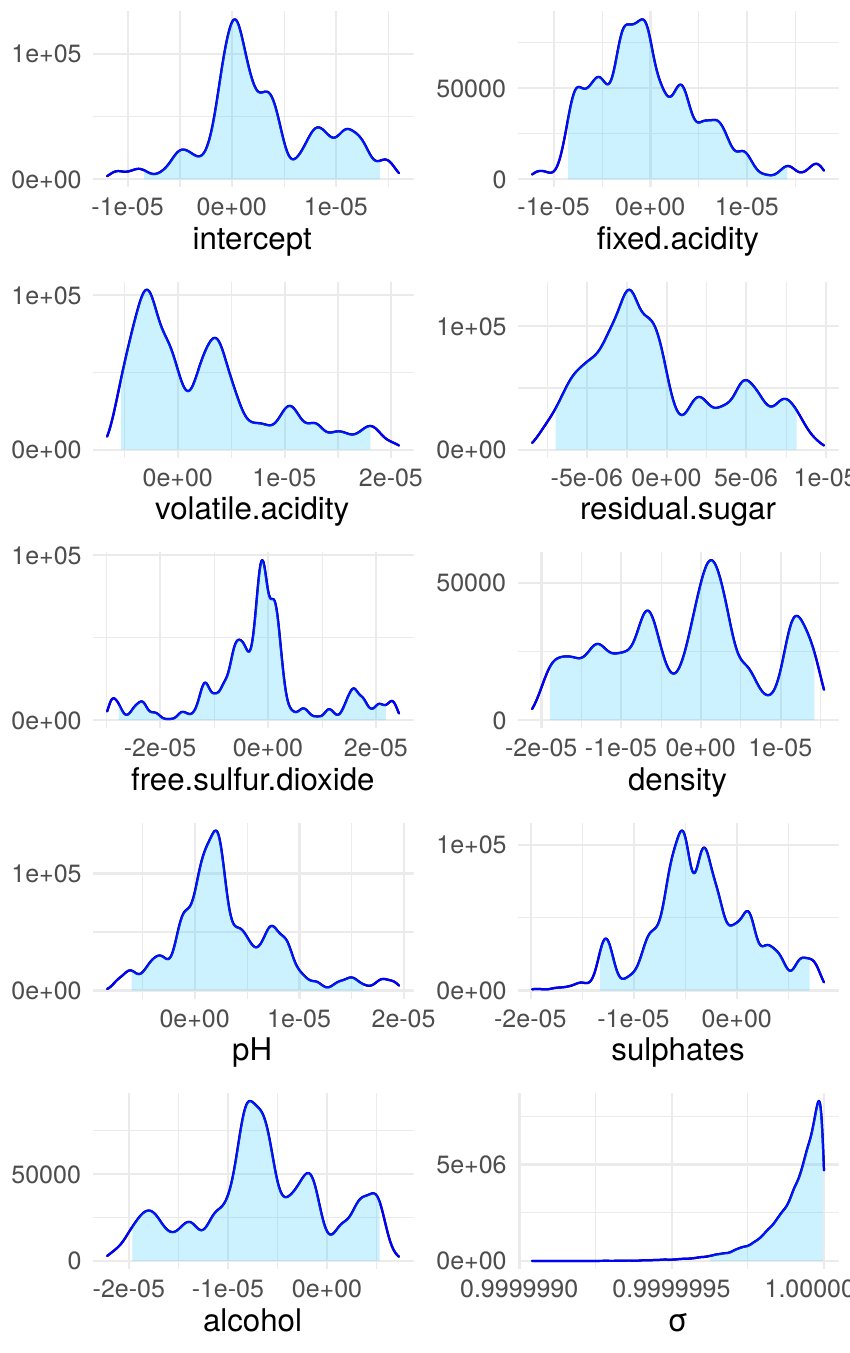}
    \caption{Standard posterior distribution for Example \ref{ex:3}, with outliers.}
    \label{fig:normal_outlier}
    \end{center}
\end{figure}
\end{example}

%\begin{figure}
%\begin{center}
%    \includegraphics[width = 0.5\textwidth, height = 5cm]{}
%    \caption{Posterior distribution $\wh{\Pi}_N$ for Example \ref{ex:3}, with outliers.}
%    \label{fig:real_data_1_outlier}
%\end{center}
%\end{figure}
%\begin{figure}
%\begin{center}
%    \includegraphics[width = 0.5\textwidth, height = 5cm]{}
%    \caption{Standard posterior distribution for Example \ref{ex:3}, with outliers.}
%    \label{fig:normal_outlier}
%\end{center}
%\end{figure}

%####################
\section{Discussion.}
\label{sec:discussion}
%####################

The proposed extension of the median of means principle to Bayesian inference yields a version of the posterior distribution possessing several desirable characteristics, such as (a) robustness, (b) valid asymptotic coverage and (c) computational tractability. In addition, the mode of this posterior distribution serves as a robust alternative to the maximum likelihood estimator. The computational cost of our method is higher compared to the usual posterior distribution as we need to solve a one-dimensional convex optimization problem to estimate the expected log-likelihood, however, the method is still practical and, unlike many existing alternatives with similar theoretical properties, can be implemented with many off the shelf sampling packages. As with many MOM-based methods, the main ``tuning parameter'' is the number of blocks $k$: while larger $k$ increases robustness, smaller values $k$ reduce the bias in the estimation of the likelihood. In many examples however, this bias is far from the worst case scenario, and we observed that in our simulations, the method behaves well even when the size of each ``block'' is small. As a practical rule of a thumb, we recommend setting $k\asymp \sqrt{N}$ if no prior information about the number of outliers is available: this stems from the fact that in typical scenarios, this choice guarantees that the bias of the estimator $\wh L(\theta)$ does not dominate the stochastic error; see the discussion immediately following assumption \ref{ass:3B} for more details. If an upper bound $\m O$ on the number of outliers is known, then one should set $k = 2\m O$ to ensure that at least half of the blocks of the data will be contamination-free. As for the choice of the sequence $\Delta_n$, it suffices to set $\Delta_n = \log\log(n)$ for the main theoretical guarantees to hold. As for the practical choice, we provided additional guidance in section \ref{section:numerical}.

Now, let us discuss the drawbacks. First of all, the requirement for $\Theta$ to be compact is quite restrictive, and is typically necessary to ensure that the quantity $\sigma(\Theta)=\sup_{\theta\in \Theta}\mathrm{var}\l( \ell(\theta,X)\r)$ appearing in our bounds is finite. This root of this requirement is related to the fact that $\wh L(\theta)$, viewed as an estimator of the mean, is not scale-invariant. At the same time, compactness assumption is satisfied if one has access to some preliminary, ``low-resolution'' proxy $\tilde\theta$ of $\theta_0$ such that $\|\theta_0-\tilde\theta\|\leq R$ for some, possibly large, $R>0$. Second, our method is currently tailored only for the case of i.i.d. data and the parametric models, which is the most natural setup that is natural for demonstrating the ``proof of concept.'' At the same time, it would be interesting to obtain practical and theoretically sound extensions that are applicable in more challenging frameworks.

%###########################
\section{Proofs.}
%###########################

%The proof of Theorem \ref{th:1} is given in the supplementary material. 
This section explains the key steps behind the proofs of our mains results. The complete argument leading to Theorem \ref{th:2} is rather long and technical. Here, we will outline the main ideas of the proof and the reduction steps that are needed to transform the problem into an easier one, while the missing details are included in the appendix.
%Let $\wt{\theta}_{N} = \argmin_{\theta\in \Theta} \wh L(\theta)$ be the minimizer of the robust proxy of the risk, and observe that 
%$\wh L(\wh \theta_N) + \frac{1}{N}\log \frac{1}{\pi(\wh \theta_N)} \leq \wh L(\wt \theta_N) + \frac{1}{N}\log \frac{1}{\pi(\wt \theta_N)}$

%######################################
\section{Proof of Theorem \ref{th:1}, Part (1).}
%######################################
%\proof[Proof of Theorem \ref{th:1}]
We will show part 1 first. In view of assumption \ref{ass:2}, $\|\theta-\theta_0\|^2 \leq c'(\theta_0) \l( L(\theta) - L(\theta_0)\r)$ whenever $\|\theta - \theta_0\|$ is sufficiently small. Hence, if we show that $\wh\theta_N$ satisfies this requirement, we would only need to estimate $L(\wh\theta_N) - L(\theta_0)$. To this end, denote $L(\theta,\theta') = L(\theta) - L(\theta')$, and observe that 
\begin{multline*}
L(\wh\theta_{N},\theta') = L(\wh\theta_{N},\theta') - \hL(\wh\theta_{N}) + \hL(\wh\theta_{N}) + \frac{1}{N}\log(1/\pi(\wh\theta_N)) -\frac{1}{N}\log(1/\pi(\wh\theta_N))
\\
\leq L(\wh\theta_{N},\theta') - \hL(\wh\theta_{N}) + \hL(\theta_0) + \frac{1}{N}\log\l(\frac{\pi(\wh\theta_N)}{\pi(\theta_0)}\r)
\\
\leq L(\theta_0,\theta') + 2\sup_{\theta\in \Theta}\l\lvert  L(\theta,\theta') - \hL(\theta)\r\rvert  + \frac{1}{N}\log\l(\frac{\pi(\wh\theta_N)}{\pi(\theta_0)}\r).
\end{multline*}
If $\pi(\wh\theta_N) \leq \pi(\theta_0)$, the last term above can be dropped without changing the inequality. On  the other hand, $\pi(\theta)$ is bounded and $\pi(\theta_0)>0$, $\frac{\pi(\wh\theta_N)}{\pi(\theta_0)}\leq \frac{\|\pi\|_\infty}{\pi(\theta_0)}$, whence the last term is at most $\frac{C(\pi,\theta_0)}{N}$. 
Given $\eps>0$, assumption \ref{ass:2} implies that there exists $\delta>0$ such that $\inf_{\|\theta-\theta_0\|\geq \eps} L(\theta) >L(\theta_0)+\delta$. Let $N$ be large enough so that $\frac{C(\pi,\theta_0)}{N}\leq \delta/2$, whence 
$\pr{\|\wh\theta_{N} - \theta_0\|\geq \eps} \leq \pr{\sup_{\theta\in \Theta}\lvert \hL(\theta) - L(\theta,\theta')\rvert >\delta/2 }$. 
It follows from Lemma 2 in \citep{minsker2020asymptotic} (see also Theorem 3.1 in \citep{minsker2018uniform}) that under the stated assumptions, 
\[
\sup_{\theta\in \Theta} \l\lvert  \hL(\theta) - L(\theta,\theta') \r\rvert  \leq o(1) + C\wt \Delta \frac{\m O}{k\sqrt n}
\]
with probability at least $99\%$ as long as $n,k$ are large enough and $\m O/k$ is sufficiently small. Here, $o(1)$ is a function that tends to $0$ as $n\to \infty$. This shows consistency of $\wh\theta_N$. Next, we will provide the required explicit upper bound on $\|\wh\theta_N-\theta_0\|$. As we've demonstrated above, it suffices to find an upper bound for $L(\wh\theta_N) - L(\theta_0,\theta')$. We will apply the result of Theorem 2.1 in \citep{mathieu2021excess} to deduce that for $C$ large enough,
$L(\wh\theta_N) - L(\theta_0) \leq C\l( \wt \Delta \l( \frac{\m O+1}{k\sqrt n} + \l(\frac{k}{N}\r)^{\frac{1+\tau}{2}} \r) \r)+ O\l( \frac{1}{\sqrt N}\r)$ with probability at least $99\%$, where $\tau\in(0,1]$ is a constant from assumption \ref{ass:3B}. To see this, it suffices to notice that in view of Lemma \ref{lemma:sup-power} in the appendix, $\mb E\sup_{\theta\in \Theta}\l\lvert  \frac{1}{N}\sum_{j=1}^N \ell(\theta,X_j) -\ell(\theta',X_j) - L(\theta,\theta')\r\rvert  \leq \frac{C}{\sqrt N}$ where $C$ may depend on $\Theta$ and the class $\{p_\theta, \ \theta\in \Theta\}$. 

%We conclude by remarking that the $O(N^{-1/2})$ term can be improved under additional assumptions, for instance by using Theorem 2.3 in \cite{mathieu2021excess} instead of Theorem 2.1. However, we avoid these technicalities as our main goal is to control the dependence of the error on the number of outliers.

The proof of Part (2) of Theorem \ref{th:1} relies on many intermediary results established in the course of the proof of Theorem \ref{th:2}. Therefore, it is presented after the proof of Theorem \ref{th:2} in section \ref{proof:C} in the appendix.

%####################################
\subsection{Proof of Theorem \ref{th:2} (sketch).}
\label{sec:sketch_of_th2}
%####################################

The high-level idea of the proof is fairly standard and consists in obtaining a proper (quadratic) local approximation of $\wh L(\theta)$ in the neighborhood of $\theta_0$, coupled with careful control of the remainder terms. However, the difficulty that one has to overcome is the fact that, unlike the empirical log-likelihood, the robust estimator $\wh L(\theta)$ is not linear in $-\log p_\theta(X)$. To do so, we develop the technical tools that are based on the existing results in the papers  \citep{minsker2020asymptotic,minsker2018uniform}.

In view of the well-known property of the total variation distance,  
\ml{
\left\|\wh{\Pi}_N - \mathcal{N}\left(\wt \theta_N, \frac{1}{N}(\partial_\theta^2L(\theta_0))^{-1}\right)\right\|_{\mathrm{TV}} 
\\
 = \frac{1}{2}\int_\Theta \Bigg\lvert\frac{\pi(\theta)e^{-N\wh{L}(\theta)}}{\int_\Theta \pi(\theta')e^{-N\wh{L}(\theta')}d\theta'} 
 - \frac{N^{d/2}\lvert\partial_\theta^2L(\theta_0)\rvert^{1/2}}{(2\pi)^{d/2}}e^{-{\frac{1}{2}N(\theta - \wt \theta_N)^T\partial_\theta^2L(\theta_0)(\theta - \wt \theta_N)}} \Bigg\rvert d\theta.
}

%\begin{multline*}
%    \left\|\wh{\Pi}_N - N\left(\wt \theta_N, \frac{1}{N}(\partial_\theta^2L(\theta_0))^{-1}\right)\right\|_{TV} 
%    \\ 
% = \frac{1}{2}\int \Bigg|\frac{\pi(\theta)e^{-N\hL(\theta)}}{\int \pi(\theta')e^{-N\hL(\theta')}d\theta'} 
% \\
% - \frac{N^{d/2}|\partial_\theta^2L(\theta_0)|^{1/2}}{(2\pi)^{d/2}}e^{-{\frac{1}{2}N(\theta - \hat{\theta}%_{n,k})^T\partial_\theta^2L(\theta_0)(\theta - \wt \theta_N)}} \Big| d\mu(\theta)\,.
%\end{multline*}
Next, let us introduce the new variable $h = \sqrt{N}(\theta - \theta_0)$, multiply the numerator and the denominator on the posterior by $N\wh L(\theta_0)$, and set 
\mln{
\label{eq:kappa}
\kappa_N(h) = -N(\wh L(\theta_0 + h/\sqrt{N}) - \wh L(\theta_0)) 
\\
- \frac{N}{2}(\partial_\theta \wh L(\theta_0))^T(\partial_\theta^2L(\theta_0))^{-1}\partial_\theta\wh L(\theta_0),
} 
and $K_N = \int_{\mb R^d} \pi(\theta_0 + h/\sqrt{N})e^{\kappa_N(h)}d\mu(h)$. The total variation distance can then be equivalently written as
$\left\|\hat{\Pi}_N - \mathcal{N}\left(\wt \theta_N\,, \frac{1}{N}(\partial_\theta^2L(\theta_0))^{-1}\right)\right\|_{\mathrm{TV}} 
= \frac{1}{2}\int \Big\lvert\frac{\pi(\theta_0 + h/\sqrt{N})e^{\kappa_N(h)}}{K_N} - \frac{\lvert\partial_\theta^2L(\theta_0)\rvert^{1/2}}{(2\pi)^{d/2}}e^{-{\frac{1}{2}(h - \sqrt{N}(\wt \theta_N - \theta_0))^T\partial_\theta^2L(\theta_0)(h - \sqrt{N}(\wt \theta_N - \theta_0))}} \Big\rvert dh$. 
The function $\pi(\theta_0 + h/\sqrt{N})e^{\kappa_N(h)}/K_N$ can be viewed a probability density function of a new probability measure $\hat{\Pi}_N'$. Thus it suffices to show that
\begin{align*}
    \l\|\hat{\Pi}'_N - \mathcal{N}\l(\sqrt{N}(\wt \theta_N - \theta_0), (\partial_\theta^2L(\theta_0))^{-1}\r)\r\|_{TV} \overset{P}{\longrightarrow} 0\,.
\end{align*}
Since $\theta_0$ is the unique minimizer of $L(\theta)$, $\partial_\theta L(\theta_0) = 0$. Next, define
$H(\theta, z) = \sum_{j=1}^k \rho'\l(\sqrt{n}\frac{\Bar{L}_j(\theta) - z}{\Delta_n}\r)$; it is twice differentiable since both $\rho$ and $\ell$ are. It is shown in the proof of Lemma 4 in \citep{minsker2020asymptotic} that 
$\partial_z H(\theta,\wh L(\theta_0)) \neq 0$ with high probability. Therefore, a unique mapping $\theta \mapsto \wh L(\theta)$ exists around the neighborhood of $\theta_0$ and so do $\partial_\theta\wh L(\theta_0)$ and $\partial_\theta^2 \wh L(\theta_0)$. 
Denote 
\[
Z_N = -(\partial^2_\theta L(\theta_0))^{-1}\sqrt{N}\,\partial_\theta \wh L(\theta_0).
\]
The following result, proven in the appendix, essentially establishes stochastic differentiability of $\wh L(\theta)$ at $\theta=\theta_0$.  
\begin{lemma}
\label{lemma:technical1}
The following relation holds: 
\[
\sqrt{N}\l(\wt \theta_N - \theta_0\r) - Z_N \overset{P}{\longrightarrow} 0.
\]
\end{lemma}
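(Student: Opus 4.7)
My plan is to establish the Bahadur-type representation via the first-order optimality condition combined with a Z-estimator-style linearization, which sidesteps the need for second-order differentiability of $\ell$ (not available from assumption~\ref{ass:3}). First, Theorem~\ref{th:1} applied with $\m O = 0$ yields $\wt\theta_N \overset{P}{\longrightarrow} \theta_0$, so with high probability $\wt\theta_N$ is interior to $\Theta$ and the first-order condition $\partial_\theta \wh L(\wt\theta_N) = 0$ holds. The implicit function theorem applied to $H(\theta, \wh L(\theta)) = 0$ (with $\partial_z H \neq 0$ near $\theta_0$ with high probability, as in Lemma~4 of \cite{minsker2020asymptotic}) provides the required differentiability and the closed-form expression
\[
\partial_\theta \wh L(\theta) = \frac{\sum_{j=1}^k \rho''(\xi_j(\theta))\,\partial_\theta \bar L_j(\theta)}{\sum_{j=1}^k \rho''(\xi_j(\theta))}, \quad \xi_j(\theta) := \frac{\sqrt{n}\l(\bar L_j(\theta) - \wh L(\theta)\r)}{\Delta_n}.
\]

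The central technical step will be to show that $\partial_\theta \wh L(\theta) = P_N \partial_\theta \ell(\theta, \cdot) + r_N(\theta)$ with $\sup_{\|\theta - \theta_0\|\leq \delta_N}\sqrt{N}\,\|r_N(\theta)\| = o_P(1)$ for a suitably chosen $\delta_N \to 0$. Since $\rho'(z) = z$ on $[-1,1]$ by assumption~\ref{ass:1}(i), $\rho''$ equals one on that interval; combined with $\Delta_n \to \infty$ and the uniform deviation bound of Theorem~3.1 in \cite{minsker2018uniform}, this will imply $\max_j|\xi_j(\theta)|\leq 1$ uniformly on the shrinking neighborhood with high probability, in which case the weighted average collapses exactly to $k^{-1}\sum_j \partial_\theta \bar L_j(\theta) = P_N \partial_\theta \ell(\theta, \cdot)$. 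The contribution from exceptional blocks will be controlled using Lipschitz continuity of $\rho''$ (assumption~\ref{ass:1}(iii)) together with the growth condition $k = o(n^\tau)$ from assumption~\ref{ass:3B}.

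Once this approximation is in hand, the conclusion will follow from the classical Z-estimator expansion. Substituting into $\partial_\theta \wh L(\wt\theta_N) = 0$ and decomposing using $P \partial_\theta \ell(\theta_0, \cdot) = 0$ gives
\[
0 = P_N \partial_\theta \ell(\theta_0, \cdot) + (P_N - P)(\partial_\theta \ell(\wt\theta_N, \cdot) - \partial_\theta \ell(\theta_0, \cdot)) + P\,\partial_\theta \ell(\wt\theta_N, \cdot) + r_N(\wt\theta_N).
\]
The middle term is $o_P(N^{-1/2})$ by the modulus-of-continuity condition in assumption~\ref{ass:3B}, while the third term equals $\partial^2_\theta L(\theta_0)(\wt\theta_N - \theta_0) + o(\|\wt\theta_N - \theta_0\|)$ by assumption~\ref{ass:2}. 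Rearranging, using $\|\wt\theta_N - \theta_0\| = O_P(N^{-1/2})$, and identifying $\sqrt{N}\,P_N \partial_\theta \ell(\theta_0, \cdot)$ with $\sqrt{N}\,\partial_\theta \wh L(\theta_0)$ up to $o_P(1)$ via the same approximation, I would conclude that $\sqrt{N}(\wt\theta_N - \theta_0) - Z_N = o_P(1)$. The main obstacle in this plan will be the uniform control of the remainder $r_N$ on the shrinking neighborhood, which requires careful handling of the behaviour of $\rho''$ near the boundary $|\xi_j|=1$ together with the random fluctuations of the $\xi_j(\theta)$'s; these technical estimates would be deferred to a supplementary lemma.
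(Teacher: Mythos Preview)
Your high-level skeleton (first-order condition, linearization, invert $\partial_\theta^2 L(\theta_0)$) matches the paper's, but the paper gets there more directly and avoids the detour through $P_N\partial_\theta\ell$ altogether. The paper works with $\partial_\theta\hL$ and $\partial_\theta L$ only: writing
\[
\partial_\theta L(\wt\theta_N)-\partial_\theta L(\theta_0)
=\bigl(\partial_\theta\hL(\wt\theta_N)-\partial_\theta\hL(\theta_0)\bigr)+r_N,
\qquad r_N:=\bigl(\partial_\theta L-\partial_\theta\hL\bigr)(\wt\theta_N)-\bigl(\partial_\theta L-\partial_\theta\hL\bigr)(\theta_0),
\]
it invokes consistency (Lemma~\ref{lemma:consistency}), asymptotic equicontinuity of $\theta\mapsto\partial_\theta\hL(\theta)-\partial_\theta L(\theta)$ (Lemma~\ref{lemma:equicont}), and $\|\partial_\theta\hL(\theta_0)\|=O_P(N^{-1/2})$ (Lemma~\ref{lemma:deriv-norm}), all quoted from \cite{minsker2020asymptotic}, to conclude display~\eqref{eq:decomposition}, which is exactly $\sqrt{N}(\wt\theta_N-\theta_0)=Z_N+o_P(1)$. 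There is no need to show $\partial_\theta\hL(\theta)\approx P_N\partial_\theta\ell(\theta)$ or to identify $\sqrt N\,\partial_\theta\hL(\theta_0)$ with $\sqrt N\,P_N\partial_\theta\ell(\theta_0)$; the equicontinuity lemma already absorbs both the empirical-process fluctuation of assumption~\ref{ass:3B} and the ``MOM versus sample mean'' discrepancy in one stroke.

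Your route is not wrong, but it is effectively re-proving Lemmas~\ref{lemma:equicont}--\ref{lemma:deriv-norm}, and one step of your plan is over-optimistic as stated. The claim that $\max_j|\xi_j(\theta)|\le 1$ with high probability does not follow from the paper's assumptions: with only $2+\tau$ moments one gets $P(|\xi_j|>1)\lesssim \Delta_n^{-(2+\tau)}$, so a union bound gives $P(\max_j|\xi_j|>1)\lesssim k\Delta_n^{-(2+\tau)}$, which need not vanish since the paper imposes no relation between $\Delta_n$ and $k$ beyond $\Delta_n\to\infty$ and $k=o(n^\tau)$. Theorem~3.1 in \cite{minsker2018uniform} controls $\hL-L$, not the individual block deviations $\bar L_j-L$, so it does not help here. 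What actually works (and is the technical heart of \cite{minsker2020asymptotic}) is to bound the \emph{expected number} of blocks with $|\xi_j|>1$ and show that their weighted contribution to $\partial_\theta\hL-k^{-1}\sum_j\partial_\theta\bar L_j$ is $o_P(N^{-1/2})$; this uses $|\rho''-1|\le 1$, the moment bound, and crucially $k=o(n^\tau)$, not Lipschitzness of $\rho''$. So your ``exceptional blocks'' hedge is pointing at the right repair, but the mechanism you name is not quite the one that carries the estimate.
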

In view of the lemma, the total variation distance between the normal laws $\mathcal{N}\l(\sqrt{N}(\wt \theta_N - \theta_0), \l(\partial^2_\theta L(\theta_0)\r)^{-1}\r)$ and $\mathcal{N}\l(Z_N,(\partial^2_\theta L(\theta_0))^{-1}\r)$ converges to $0$ in probability. Hence one only needs to show that $\l\|\hat{\Pi}'_N - \mathcal{N}\l(Z_N,(\partial^2_\theta L(\theta_0))^{-1}\r)\r\|_{TV} \overset{P}{\longrightarrow} 0$. 
Let
\begin{align}\label{eq:lamda}
    \lambda_N(h) = -\frac{1}{2}(h-Z_N)^T\partial^2_\theta L(\theta_0)(h-Z_N)
\end{align}
and observe that as long as one can establish that
\begin{equation}
\label{eq:equibvm}
\int_{\mb R^d}\l\lvert\pi(\theta_0 + h/\sqrt{N})e^{\kappa_N(h)} - \pi(\theta_0)e^{\lambda_N(h)}\r\rvert d h \overset{P}{\longrightarrow} 0\,,
\end{equation}
we will be able to conclude that 
\ml{
\l\lvert K_N - (2\pi)^{d/2}\lvert\partial^2_\theta L(\theta_0)\rvert^{-1}\pi(\theta_0)\r\rvert 
\\ = \l\lvert K_N - \int_{\mb R^d}\pi(\theta_0)e^{\lambda_N(h)}d\mu(h)\r\rvert 
\\
\leq \int_{\mb R^d}\l\lvert\pi(\theta_0 + h/\sqrt{N})e^{\kappa_N(h)} - \pi(\theta_0)e^{\lambda_N(h)}\r\rvert d h \overset{P}{\longrightarrow} 0,
} 
so that $K_N \overset{P}{\longrightarrow} (2\pi)^{d/2}\lvert \partial^2_\theta L(\theta_0)\rvert^{-1}\pi(\theta_0)$. This further implies that
\ml{
    \int_{\mathbb{R}^d}\l\lvert\pi(\theta_0 + h/\sqrt{N})e^{\kappa_N(h)} - \frac{K_N\lvert\partial^2_\theta L(\theta_0)\rvert}{(2\pi)^{d/2}}e^{\lambda_N(h)}\r\rvert d h \\
    \leq \int_{\mb R^d}\l\lvert\pi(\theta_0 + h/\sqrt{N})e^{\kappa_N(h)} - \pi(\theta_0)e^{\lambda_N(h)}\r\rvert d\mu(h)
    \\ 
    + \l\lvert \pi(\theta_0)- \frac{K_N \lvert \partial^2_\theta L(\theta_0)\rvert}{(2\pi)^{d/2}}\r\rvert\int_{\mb R^d} e^{\lambda_N(h)}d h 
    \\
    =  \l\lvert\pi(\theta_0) - \frac{K_N\lvert\partial^2_\theta L(\theta_0)\rvert}{(2\pi)^{d/2}}\r\rvert\frac{(2\pi)^{d/2}}{\lvert\partial^2_\theta L(\theta_0)\rvert} \\
    +\int_{\mb R^d} \l\lvert\pi(\theta_0 + h/\sqrt{N})e^{\kappa_N(h)} - \pi(\theta_0)e^{\lambda_N(h)}\r\rvert d h \overset{P}{\longrightarrow} 0,
}
and the desired result would follow. Therefore, it suffices to establish that relation \eqref{eq:equibvm} holds. Moreover, since $\pi = 0$ outside of a compact set $\Theta$, it is equivalent to showing that
\begin{equation}
\label{eq:integral}
\int_{\Theta'} \l\lvert \pi(\theta_0 + h/\sqrt{N})e^{\kappa_N(h)} - \pi(\theta_0)e^{\lambda_N(h)}\r\rvert d\mu(h) \overset{P}{\longrightarrow} 0,
\end{equation}
where $\Theta' = \{h : \theta_0 + h/\sqrt{N} \in \Theta\}$. Note that
\begin{align*}
\partial_\theta \wh L(\theta_0 + h/\sqrt{N}) - \partial_\theta \wh L(\theta_0) 
= \frac{1}{\sqrt{N}}\partial_\theta^2 \wh L(\theta_0)h + o_P(\|h\|/\sqrt{N})\,.
\end{align*}
An argument behind the proof of Lemma \ref{lemma:technical1} yields (again, we present the missing details in the technical supplement) the following representation for $\kappa_N(h)$ defined in \eqref{eq:kappa}: 
\begin{multline}
\label{eq:kappa_expansion}
    \kappa_N(h) = -\sqrt{N}h^T\partial_\theta \wh L(\theta_0) - \frac{1}{2}h^T\partial^2_\theta L(\theta_0)h 
    \\- \frac{N}{2}(\partial_\theta \wh L(\theta_0))^T(\partial_\theta^2L(\theta_0))^{-1}\partial_\theta\wh L(\theta_0) \\
    - N\l(R_1(\theta_0 + h/\sqrt{N}) + R_2(\theta_0 + h/\sqrt{N})\r)\,.
\end{multline}
Let us divide $\Theta'$ into 3 regions: $A_N^1 = \{h \in \Theta' : \|h\| \leq \|h^0_N\|\}$, $A_N^2 = \{h \in \Theta' : \|h^0_N\| < \|h\| \leq \delta\sqrt{N}\}$ and $A_N^3 = \{h \in \Theta' : \delta\sqrt{N}< \|h\| \leq R\sqrt{N}\}$ where $\delta$ is a sufficiently small positive number and $R$ is a sufficiently large so that $\{\theta \in \mb R^d : \|\theta - \theta_0\| \leq R\}$ contains $\Theta$. Finally, $h^0_N$ is chosen such that $\|h^0_N\|\rightarrow\infty$, $\|h^0_N/\sqrt{N}\| \rightarrow 0$ and that satisfies an additional growth condition specified in Lemma \ref{lemma:negligible_remainder} in the appendix. The remainder of the proof is technical and is devoted to proving that each part of the integral \eqref{eq:integral} corresponding to $A_N^1, \ A_N^2, \ A_N^3$ converges to $0$. Details are presented in the appendix.

% Acknowledgements should only appear in the accepted version.

\section*{Declarations}

\subsubsection*{Funding} 

Authors acknowledge support by the National Science Foundation grants DMS CAREER-2045068 and CIF-1908905.

\subsection*{Conflicts of Interests}

The authors have no relevant financial or non-financial interests to disclose.

\subsection*{Ethics approval}

No ethics approval is applicable to this article.

\subsection*{Consent to participate}

No consent to participate is applicable to this article.

\subsection*{Consent for publication}

No consent for publication is applicable to this article.

\subsection*{Availability of Data}

The data used in this article is available at UC Irvine machine learning repository:  \textcolor{blue}{https://archive.ics.uci.edu/dataset/186/wine+quality}.

\subsection*{Code Availability}

The code for this paper is stored in \textcolor{blue}{https://github.com/shunanyao/MOM-Bayes}.

\subsection*{Authors' Contribution}

The idea and concept was proposed by Stanislav Minsker. Data analysis was done by Shunan Yao. Mathematical proofs and the draft of the manuscript were written by both authors. Both authors read and approved the manuscript.

\bibliography{RobustPosterior}

\newpage
\appendix
%\onecolumn

%######################################
\section{Technical results.}
%######################################

In this section, we introduce some of the technical tools that will be used in the proofs of our main results. Lemmas \ref{lemma:unif} - \ref{lemma:deriv-norm} stated below were established in \cite{minsker2020asymptotic}, and therefore will be given without the proofs. Let $\Theta' \subset \Theta$ be a compact set, and define
\begin{align*}
    \Tilde{\Delta}(\Theta') := \max(\Delta_n, \sigma^2(\Theta'))\,.
\end{align*}
The following lemma provides a high probability bound for $\l\lvert\hL(\theta) - L(\theta)\r\rvert$ that holds uniformly over $\Theta' \subset \Theta$.
\begin{lemma}[Lemma 2 in \citep{minsker2020asymptotic}]
\label{lemma:unif}
Let $\m L = \{ \ell(\theta,\cdot), \ \theta \in \Theta \}$ be a class of functions mapping $S$ to $\mb R$, and assume that 
$\sup_{\theta\in \Theta} \mb E\l\lvert \ell(\theta,X) - L(\theta)\r\rvert^{2+\tau}<\infty$ for some $\tau\in [0,1]$. Then there exist absolute constants $c,\, C>0$ and a function $g_{\tau,P}(x)$ satisfying $g_{\tau,P}(x) \buildrel{x\to\infty}\over{=}\begin{cases} o(1), & \tau=0, 
\\
O(1), & \tau>0
\end{cases}$ such that for all $s>0,$ $n$ and $k$ satisfying
\be
%\label{eq:assump}
%\inf_{\kappa\in[0,\tau]}
\frac{s}{\sqrt{k}\Delta_n}\,\mb E\sup_{\theta\in \Theta'} \frac{1}{\sqrt{N}}\sum_{j=1}^N \l\lvert \ell(\theta,X_j)) - L(\theta) \r\rvert 
+ g_{\tau,P}(n)\sup_{\theta \in \Theta'} \frac{\mb E \l\lvert \ell(\theta,X) - L(\theta)\r\rvert^{2+\tau}}{\Delta_n^{2+\tau}n^{\tau/2}} \leq c,
\ee
the following inequality holds with probability at least $1 - \frac{1}{s}$:
\ml{
\sup_{\theta\in \Theta'}\l\lvert  
\hL(\theta) - L(\theta) \r\rvert  \leq 
C 
\Bigg[ s\cdot\frac{\widetilde \Delta(\Theta')}{\Delta_n} \mb E\sup_{\theta\in \Theta'} \l\lvert \frac{1}{N} \sum_{j=1}^N \Big( \ell(\theta,X_j) - L(\theta) \Big) \r\rvert  
\\
+ \widetilde \Delta(\Theta') \l( \frac{g_{\tau,P}(n)}{\sqrt n}\sup_{\theta \in \Theta'} \frac{\mb E \l\lvert  \ell(\theta,X) - L(\theta)\r\rvert ^{2+\tau}}{\Delta_n^{2+\tau}n^{\tau/2}} \r) \Bigg].
}
\end{lemma}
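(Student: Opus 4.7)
The plan is to exploit the first-order optimality condition defining $\hL(\theta)$ and then control the resulting linearization uniformly over $\theta \in \Theta'$. By convexity and smoothness of $\rho$, for each fixed $\theta$, $\hL(\theta)$ solves
$\sum_{j=1}^k \rho'\l(\sqrt{n}(\bar L_j(\theta) - \hL(\theta))/\Delta_n\r) = 0$.
Introducing $Y_j(\theta) := \sqrt n(\bar L_j(\theta) - L(\theta))/\Delta_n$ and $u(\theta) := \sqrt n(\hL(\theta) - L(\theta))/\Delta_n$, and applying a first-order Taylor expansion of $\rho'$ (justified by assumption \ref{ass:1}), one obtains the pointwise identity
\begin{equation*}
u(\theta) \cdot \frac{1}{k}\sum_{j=1}^k \rho''\bigl(\xi_j(\theta)\bigr) = \frac{1}{k}\sum_{j=1}^k \rho'\bigl(Y_j(\theta)\bigr),
\end{equation*}
where $\xi_j(\theta)$ lies between $Y_j(\theta)$ and $Y_j(\theta) - u(\theta)$. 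Inverting this relation reduces the proof to (a) bounding the denominator from below and (b) controlling the numerator, both uniformly in $\theta$.

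For (a), assumptions \ref{ass:1}(i)-(ii) yield $\rho''(0)=1$ and $0 \leq \rho''(z) \leq 1$, with $\rho''$ uniformly bounded below by some $c_0>0$ on $|z| \leq 2$. Hence on blocks where $|Y_j(\theta)| \leq 1$ and provided $|u(\theta)|$ stays moderate, each term in $\sum_j \rho''(\xi_j(\theta))$ contributes at least $c_0$. It therefore suffices to show that the fraction of indices $j$ with $|Y_j(\theta)| > 1$ stays small uniformly in $\theta$. Since $\mathbb E Y_j(\theta)^2 = \sigma^2(\theta)/\Delta_n^2 \leq 1$ by the definition of $\widetilde\Delta(\Theta')$, Chebyshev's inequality bounds this fraction pointwise; converting to a uniform statement via a standard Bernoulli empirical-process / contraction argument then gives the required lower bound on the denominator with probability $1 - O(1/s)$.

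For (b), write $\rho'(Y_j(\theta)) = Y_j(\theta) + \bigl(\rho'(Y_j(\theta)) - Y_j(\theta)\bigr)$. The linear part sums to $\frac{\sqrt n}{\Delta_n}\, k\,(\bar L(\theta) - L(\theta))$ with $\bar L(\theta) = \frac{1}{N}\sum_j \ell(\theta,X_j)$, so after rescaling its supremum in expectation is bounded by $\mathbb E \sup_{\theta} \l\lvert \frac{1}{N}\sum_j (\ell(\theta,X_j) - L(\theta)) \r\rvert$; Markov's inequality at level $1/s$ then produces the leading $s \cdot (\widetilde\Delta/\Delta_n) \cdot \mathbb E \sup_\theta \l\lvert\cdots\r\rvert$ term. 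The residual $\rho'(Y_j(\theta)) - Y_j(\theta)$ vanishes on $\{|Y_j(\theta)| \leq 1\}$ by assumption \ref{ass:1}(i) and is bounded by $|Y_j(\theta)|\,\mathbf{1}\{|Y_j(\theta)|>1\}$ otherwise; applying Markov with exponent $2+\tau$ yields $\mathbb E|\rho'(Y_j(\theta)) - Y_j(\theta)| \leq \mathbb E |Y_j(\theta)|^{2+\tau}$, which after rescaling contributes the residual term involving $g_{\tau,P}(n)$ and the $(2+\tau)$-th moment ratio.

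The main obstacle is promoting the pointwise estimates (a) and (b) to bounds that hold uniformly over $\theta \in \Theta'$. This is precisely the role of the smallness hypothesis in the statement: it ensures that the supremum over $\theta$ of the fluctuation of $\frac{1}{k}\sum_j \rho''(\xi_j(\theta))$ around its mean is small enough for the denominator to remain above a fixed positive fraction of $k$ with probability $1-1/s$, and simultaneously that the numerator's expected supremum can be converted to a high-probability bound via Markov. Once uniformity is secured, the bound $|\hL(\theta) - L(\theta)| = |u(\theta)|\Delta_n/\sqrt n$ inherits the structure displayed in the statement; the prefactor $\widetilde\Delta(\Theta')/\Delta_n$ reflects the fact that the linear part of the numerator lives on the intrinsic noise scale $\widetilde\Delta$ rather than on the tuning scale $\Delta_n$.
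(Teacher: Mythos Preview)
The paper does not prove this lemma: it explicitly states that Lemmas \ref{lemma:unif}--\ref{lemma:deriv-norm} ``were established in \cite{minsker2020asymptotic}, and therefore will be given without the proofs.'' So there is no in-paper proof to compare against; the relevant benchmark is the argument in the cited references \cite{minsker2018uniform,minsker2020asymptotic}.

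Your sketch follows the correct high-level strategy used there --- linearize the optimality condition, control numerator and denominator separately, and upgrade to uniform bounds --- but two points need attention. First, the claim ``$\mathbb E Y_j(\theta)^2 = \sigma^2(\theta)/\Delta_n^2 \leq 1$ by the definition of $\widetilde\Delta(\Theta')$'' is not justified: nothing forces $\sigma(\theta)\leq \Delta_n$, and when $\sigma(\theta)>\Delta_n$ a positive fraction of blocks can have $|Y_j(\theta)|>1$, so the denominator lower bound via Chebyshev fails as written. This is exactly where the factor $\widetilde\Delta(\Theta')/\Delta_n$ must enter the analysis, not merely the final display; in the cited proofs it emerges from bounding $\mathbb E\rho'(Y_j)$ and $\mathbb E\rho''(Y_j)$ with the correct scaling rather than from a Chebyshev step. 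Second, your step (a) is circular: you assume $|u(\theta)|$ ``stays moderate'' in order to lower-bound $\sum_j \rho''(\xi_j(\theta))$, but $u(\theta)$ is precisely what you are trying to control. The standard fix --- used in \cite{minsker2018uniform} --- is to avoid the implicit Taylor route altogether and instead exploit monotonicity of $z\mapsto \sum_j \rho'(\sqrt n(\bar L_j(\theta)-z)/\Delta_n)$: one shows directly that this sum is negative at $z=L(\theta)+\epsilon$ and positive at $z=L(\theta)-\epsilon$ for the claimed $\epsilon$, uniformly over $\theta$, which pins down $\hL(\theta)$ without any a priori bound on $u$.
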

This lemma implies that as long as $\mb E \sup_{\theta \in \Theta'} N^{-1/2}\sum_{j=1}^N \l\lvert \ell(\theta, X_j) - L(\theta)\r\rvert  = O(1)$,
%and $\sigma^2(\Theta') \lesssim \Delta_n = O(1)$, 
\begin{align*}
    \sup_{\theta \in \Theta'} \l\lvert \hL(\theta) - L(\theta)\r\rvert  = O_p(N^{-1/2} + n^{-(1 + \tau) / 2})\,.
\end{align*}

Next, Lemma \ref{lemma:consistency} below establishes consistency of the estimator $\wt \theta_N$, defined in \eqref{eq:theta-tilde}, that is a necessary ingredient in the proof of the Bernstein-von Mises theorem.
\begin{lemma}[Theorem 1 in \citep{minsker2020asymptotic}]
\label{lemma:consistency}
Let assumptions \ref{ass:1}-\ref{ass:3} hold, and assume that $\limsup_{N\to\infty}\frac{\m O}{k}\leq c$ for a sufficiently small constant $c>0$. Then $\wt{\theta}_{N} \rightarrow \theta_0$ in probability as $n,N/n\to\infty$. 
\end{lemma}

The following lemma establishes the asymptotic equicontinuity of the process $\theta\mapsto\partial_\theta \hL(\theta) - \partial_\theta L(\theta)$ at $\theta_0$ (recall that the existence of $\partial_\theta \hL(\theta)$ at the neighborhood around $\theta_0$ has bee justified in the proof of Theorem \ref{th:2}).

\begin{lemma}[Lemma 3 in \citep{minsker2020asymptotic}]
\label{lemma:equicont}
Let assumptions \ref{ass:1}-\ref{ass:3B} hold. Then for any $\eps>0$,
\[
\lim_{\delta\to 0}\limsup_{k,n\to\infty}\pr{\sup_{\|\theta - \theta_0\|\leq \delta}\l\| \sqrt{N}\l( \partial_\theta \hL(\theta) - \partial_\theta L(\theta) - \l(\partial_\theta \hL(\theta_0) - \partial_\theta L(\theta_0) \r)\r)\r\| \geq \eps} = 0.
\]
\end{lemma}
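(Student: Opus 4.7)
The plan is to reduce asymptotic equicontinuity of $\sqrt{N}(\partial_\theta \hL(\theta) - \partial_\theta L(\theta))$ at $\theta_0$ to that of the classical empirical-gradient process $\theta \mapsto \sqrt{N}(\partial_\theta L_N(\theta) - \partial_\theta L(\theta))$, which is handled directly by Assumption \ref{ass:3B}. First I would apply the implicit function theorem to the first-order optimality condition $\sum_{j=1}^k \rho'(u_j(\theta)) = 0$, where $u_j(\theta) := \sqrt{n}(\bar L_j(\theta) - \hL(\theta))/\Delta_n$; its well-posedness in a neighborhood of $\theta_0$ has already been justified via Lemma 4 of \cite{minsker2020asymptotic}. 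This yields $\partial_\theta \hL(\theta) = \sum_j \bar w_j(\theta)\, \partial_\theta \bar L_j(\theta)$ with weights $\bar w_j(\theta) := \rho''(u_j(\theta))/\sum_l \rho''(u_l(\theta))$, so that when all $\bar w_j = 1/k$ one recovers $\partial_\theta \hL(\theta) = (1/k)\sum_j \partial_\theta \bar L_j(\theta) = \partial_\theta L_N(\theta)$. Decomposing $\partial_\theta \hL(\theta) - \partial_\theta L(\theta) = (\partial_\theta L_N(\theta) - \partial_\theta L(\theta)) + (\partial_\theta \hL(\theta) - \partial_\theta L_N(\theta))$, the first summand is asymptotically equicontinuous at $\theta_0$ by Assumption \ref{ass:3B}, so the task reduces to showing $\sqrt{N}(\partial_\theta \hL(\theta) - \partial_\theta L_N(\theta))$ is uniformly negligible over $\|\theta - \theta_0\| \le \delta$.

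The crucial observation is that Assumption \ref{ass:1}(i) forces $\rho'(z) = z$, and hence $\rho''(z) = 1$, for $|z| \le 1$. Consequently, on the event $\m E_{n, \delta} := \l\{\max_{j \le k}\sup_{\|\theta - \theta_0\| \le \delta}|u_j(\theta)| \le 1\r\}$ the first-order condition becomes linear in $z$, immediately yielding $\hL(\theta) = L_N(\theta)$ for every $\theta$ in the ball, and consequently $\bar w_j(\theta) \equiv 1/k$ and $\partial_\theta \hL(\theta) \equiv \partial_\theta L_N(\theta)$. On this event the remainder term is identically zero, so the whole argument reduces to showing $\pr{\m E_{n, \delta}} \to 1$ as $k, n \to \infty$.

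The main obstacle is this probability bound. Using $|u_j(\theta)| \le (\sqrt{n}/\Delta_n)\l(|\bar L_j(\theta) - L(\theta)| + |\hL(\theta) - L(\theta)|\r)$, the second summand is controlled uniformly in $j$ and $\theta$ by Lemma \ref{lemma:unif} applied to $\Theta' = \{\theta : \|\theta - \theta_0\| \le \delta\}$, which, combined with $\Delta_n \to \infty$, guarantees $\sup_\theta(\sqrt{n}/\Delta_n)|\hL(\theta) - L(\theta)| = o_P(1)$. For the first summand, the envelope moment bound of Assumption \ref{ass:3}, together with standard symmetrization-and-chaining inequalities for empirical processes, yields $\mb E\sup_{\|\theta - \theta_0\| \le \delta}|\sqrt{n}(\bar L_j(\theta) - L(\theta))|^{2+\tau} = O(1)$ for each block; a union bound over the $k$ blocks combined with Markov's inequality delivers the required control, since the growth constraint $k = o(n^\tau)$ from Assumption \ref{ass:3B} balances the $(2+\tau)$-th moment tail. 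In borderline regimes where $\Delta_n$ grows too slowly for this direct route, one can instead work with the representation $\hL(\theta) - L_N(\theta) = (\Delta_n/(k\sqrt{n}))\sum_{j:|u_j(\theta)| > 1}(\rho'(u_j(\theta)) - u_j(\theta))$ obtained directly from the first-order condition, and exploit the boundedness of $\rho'$ from Assumption \ref{ass:1}(i) together with higher-moment estimates to show that the aggregate contribution of the ``exceptional'' blocks remains $o_P(1/\sqrt{N})$ uniformly in $\theta$, even after differentiation.
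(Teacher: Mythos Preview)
The paper does not give its own proof of this lemma; it is cited from \cite{minsker2020asymptotic}. So there is no in-paper argument to compare against, and I assess your proposal on its own merits.

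Your decomposition $\partial_\theta \hL - \partial_\theta L = (\partial_\theta L_N - \partial_\theta L) + (\partial_\theta \hL - \partial_\theta L_N)$, together with the observation that equicontinuity of the first piece is exactly Assumption~\ref{ass:3B}, is fine. The gap is in the second piece. Your ``direct route'' asserts $\pr{\m E_{n,\delta}} \to 1$, but this is \emph{not} guaranteed by the paper's assumptions. For a fixed $\theta$, the block statistics $\sqrt n(\bar L_j(\theta) - L(\theta))$ are i.i.d.\ with nondegenerate variance $\sigma^2(\theta)$, and Rosenthal's inequality gives $\mb E\lvert \sqrt n(\bar L_j(\theta) - L(\theta))\rvert^{2+\tau} = O(1)$: the dominant contribution is the Gaussian-scale term $\sigma(\theta)^{2+\tau}$, which does not decay in $n$. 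Your union-bound-plus-Markov step therefore yields only $O(k/\Delta_n^{2+\tau})$, and nothing in the assumptions ties $k$ to $\Delta_n$; the growth restriction $k = o(n^\tau)$ involves $n$, not $\Delta_n$. Concretely, take $\tau = 1$, $k \asymp \sqrt n$, $\Delta_n = \log\log n$ (all admissible), and a model in which $\ell(\theta_0,X) - L(\theta_0)$ is sub-Gaussian: then $\max_{j\le k}\lvert \sqrt n(\bar L_j(\theta_0) - L(\theta_0))\rvert$ is of order $\sigma\sqrt{2\log k} \asymp \sqrt{\log n}$, which dominates $\Delta_n$, so $\pr{\m E_{n,\delta}} \to 0$ even at the single point $\theta_0$.

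Your fallback identity $\hL - L_N = (\Delta_n/(k\sqrt n))\sum_j(\rho'(u_j) - u_j)$ is correct, but differentiating it in $\theta$ produces $\partial_\theta \hL - \partial_\theta L_N = k^{-1}\sum_j(\rho''(u_j) - 1)(\partial_\theta \bar L_j - \partial_\theta \hL)$, which is implicit in $\partial_\theta \hL$; solving for it simply reproduces the weighted-average representation $\partial_\theta \hL - \partial_\theta L_N = \sum_j(\bar w_j - 1/k)\partial_\theta \bar L_j$. Showing this is $o_P(N^{-1/2})$ uniformly over the $\delta$-ball requires simultaneous control of the number of blocks with $\lvert u_j(\theta)\rvert > 1$ and of the corresponding gradient block means $\partial_\theta \bar L_j(\theta)$, uniformly in $\theta$. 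That is precisely the nontrivial content of the lemma, and ``boundedness of $\rho'$ together with higher-moment estimates'' does not by itself close it: boundedness of $\rho'$ helps with $\hL - L_N$ but is lost upon differentiation, and the number of exceptional blocks need not be $o_P(1)$ under the stated assumptions, as the example above shows. The argument in \cite{minsker2020asymptotic} does proceed through a careful analysis of this weighted representation, but it requires substantially more than the one-line appeal you give.
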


This result combined with Lemma \ref{lemma:consistency} yields a useful corollary.
Note that due to consistency of $\wt{\theta}_{N}$,
\begin{align}\label{eq:g1}
    \partial_\theta L(\wt{\theta}_{N}) - \partial_\theta L(\theta_0) = \partial_\theta^2 L(\theta_0)\l(\wt{\theta}_{N} - \theta_0\r) + o_p\l(\l\|\wt{\theta}_{N} - \theta_0\r\|\r)\,.
\end{align}
On the other hand,  
\mln{
\label{eq:g2}
\partial_\theta L(\wt{\theta}_{N}) - \partial_\theta L(\theta_0) = \partial_\theta \hL(\wt{\theta}_{N})
- \partial_\theta \hL(\theta_0) 
\\
+ \l( \partial_\theta L(\wt{\theta}_{N}) - \partial_\theta \hL(\wt{\theta}_{N}) \r) - \l( \partial_\theta L(\theta_0)- \partial_\theta \hL(\theta_0)\r) 
\\
=  \partial_\theta \hL(\wt{\theta}_{N}) - \partial_\theta \hL(\theta_0) + r_N,
}
where $r_N = \l( \partial_\theta L(\wt{\theta}_{N}) - \partial_\theta \hL(\wt{\theta}_{N}) \r) - \l( \partial_\theta L(\theta_0)- \partial_\theta \hL(\theta_0)\r)$. Note that for any $\delta>0$,
\begin{multline*}
\sqrt{N}\|r_N\| \leq \sqrt{N}\sup_{\|\theta - \theta_0\|\leq \delta} \l\| \l(\partial_\theta \hL(\theta) - \partial_\theta L(\theta)\r) - \l(\partial_\theta \hL(\theta_0) - \partial_\theta L(\theta_0)\r) \r\| \\
+ \sqrt{N}\|r_N\| I\{ \|\wt{\theta}_{N} - \theta_0 \| >\delta \}\,.
\end{multline*}
The first term converges to $0$ in probability by Lemma \ref{lemma:equicont} the second term converges to $0$ in probability by Lemma \ref{lemma:consistency}. Therefore,
\[
\pd^2_\theta L(\theta_0)\l(\wt{\theta}_{N} - \theta_0\r) + o\l(\|\wt{\theta}_{N} - \theta_0\|\r) = - \l( \partial_\theta \hL(\theta_0) - \partial_\theta L(\theta_0)\r) + o_p(N^{-1/2})\,.
\]
Under assumptions of the following Lemma \ref{lemma:deriv-norm},  $\sqrt{N}\l( \partial_\theta \hL(\theta_0) - \partial_\theta L(\theta_0)\r)$ is asymptotically (multivariate) normal, therefore, $\|\partial_\theta \hL(\theta_0) - \partial_\theta L(\theta_0)\| = O_p(N^{-1/2})$. Moreover, $\pd^2_\theta L(\theta_0)$ is non-singular by Assumption \ref{ass:2}. It follows that $\| \wt{\theta}_{N} - \theta_0 \| = O_p(N^{-1/2})$, and we conclude that 
\ben
\label{eq:decomposition}
\sqrt{N}(\wt{\theta}_{N} - \theta_0) = - \l( \pd^2_\theta L(\theta_0)\r)^{-1}\sqrt{N}\l( \partial_\theta \hL(\theta_0) - \partial_\theta L(\theta_0)\r) + o_p(1)\,.
\een 

\begin{lemma}[Lemma 4 and Theorem 3 in \citep{minsker2020asymptotic}]
\label{lemma:deriv-norm}
Let assumptions \ref{ass:1}-\ref{ass:3B} hold. Then the following asymptotic relations hold:
\begin{align*}
&\sqrt{N}\l( \partial_\theta \hL(\theta_0) - \partial_\theta L(\theta_0)\r) \overset{d}{\longrightarrow} \mathcal{N}\l(0, I(\theta_0) \r) \text{ and }
\\
&\sqrt{N}\l(\wt{\theta}_{N} - \theta_0\r) \overset{d}{\longrightarrow} \mathcal{N}\l(0, \l( \pd^2_\theta L(\theta_0)\r)^{-1} I(\theta_0) \l( \pd^2_\theta L(\theta_0)\r)^{-1}\r).
\end{align*}
\end{lemma}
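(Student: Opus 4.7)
The plan is to first establish the first convergence and then deduce the second from the decomposition \eqref{eq:decomposition} via Slutsky's theorem. Indeed, once $\sqrt{N}\l(\partial_\theta \hL(\theta_0) - \partial_\theta L(\theta_0)\r) \overset{d}{\longrightarrow} \mathcal{N}(0, I(\theta_0))$ is shown, applying the continuous linear map $-\l(\partial_\theta^2 L(\theta_0)\r)^{-1}$ together with the score identity $\partial_\theta^2 L(\theta_0) = I(\theta_0)$ (valid under our regularity assumptions, since $L(\theta) = -\mb E\log p_\theta(X)$) immediately gives $\sqrt{N}(\wt\theta_N - \theta_0) \overset{d}{\longrightarrow} \mathcal{N}(0, I^{-1}(\theta_0))$.

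To prove the first convergence, I would start from the implicit relation $H(\theta, \hL(\theta)) = 0$, where $H(\theta, z) = \sum_{j=1}^k \rho'\l(\sqrt{n}(\Bar{L}_j(\theta) - z)/\Delta_n\r)$. Because $\partial_z H(\theta_0, \hL(\theta_0)) \neq 0$ with high probability (established in the proof of Lemma 4 of \cite{minsker2020asymptotic}), the implicit function theorem yields
\[
\partial_\theta \hL(\theta_0) = \frac{\sum_{j=1}^k \rho''\l(\sqrt{n}(\Bar{L}_j(\theta_0) - \hL(\theta_0))/\Delta_n\r)\, \partial_\theta \Bar{L}_j(\theta_0)}{\sum_{j=1}^k \rho''\l(\sqrt{n}(\Bar{L}_j(\theta_0) - \hL(\theta_0))/\Delta_n\r)}.
\]
The crucial observation is that, by Assumption \ref{ass:1}(i), $\rho''(z) = 1$ for $|z| \leq 1$. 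At $\theta_0$, each $\Bar{L}_j(\theta_0)$ concentrates around $L(\theta_0)$ at the standard $O_p(1/\sqrt n)$ rate, and $\hL(\theta_0) - L(\theta_0)$ is of the same order of magnitude by Lemma \ref{lemma:unif}. Thus every argument $\sqrt{n}(\Bar{L}_j(\theta_0) - \hL(\theta_0))/\Delta_n$ is $O_p(1/\Delta_n) \to 0$. Define $\m E_N$ to be the event that all $k$ of these arguments lie in $[-1,1]$; on $\m E_N$ every weight $\rho''(\cdot)$ equals $1$ exactly, and the formula collapses to the plain average
\[
\partial_\theta \hL(\theta_0) = \frac{1}{k}\sum_{j=1}^k \partial_\theta \Bar{L}_j(\theta_0) = \frac{1}{N}\sum_{i=1}^N \partial_\theta \ell(\theta_0, X_i),
\]
where the $\ell(\theta',X_i)$ piece drops out upon differentiating in $\theta$. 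Since $\partial_\theta L(\theta_0) = \mb E\partial_\theta \ell(\theta_0, X) = 0$ and $\mathrm{Var}\l(\partial_\theta \ell(\theta_0, X)\r) = I(\theta_0)$ under standard regularity, the multivariate CLT delivers the claimed Gaussian limit on $\m E_N$.

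The main technical obstacle is controlling the exceptional event $\m E_N^c$: I must show $\mb P(\m E_N^c) \to 0$ fast enough not to disturb the $\sqrt N$-scale limit, since on $\m E_N^c$ the exact cancellation of the implicit-function-theorem weights fails. A union bound over the $k$ blocks combined with a $(2+\tau)$-th moment estimate of Marcinkiewicz--Zygmund type applied to $\Bar{L}_j(\theta_0) - L(\theta_0)$ yields a bound of the form $\mb P(\m E_N^c) \leq Ck/\Delta_n^{2+\tau}$, which vanishes under the growth conditions $k = o(n^\tau)$ and $\Delta_n \to \infty$ of Assumptions \ref{ass:3}--\ref{ass:3B}. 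A secondary point, already handled implicitly by Lemma \ref{lemma:unif}, is the uniform control of $\hL(\theta_0) - L(\theta_0)$ appearing inside $\rho''$. Once these two tail estimates are in place, the multivariate CLT together with Slutsky's theorem completes the proof of both assertions.
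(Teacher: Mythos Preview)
The paper does not prove this lemma in the text; it is cited from \cite{minsker2020asymptotic} (see the sentence preceding Lemma~\ref{lemma:unif}). So there is no in-paper argument to match against, and your sketch must stand on its own.

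Your reduction of the second claim to the first via \eqref{eq:decomposition} and Slutsky is correct, as is the implicit-function formula for $\partial_\theta\hL(\theta_0)$. The gap is in the control of the exceptional event. You claim that the union bound yields $\mb P(\m E_N^c)\leq Ck/\Delta_n^{2+\tau}$ and that this vanishes under $k=o(n^{\tau})$ and $\Delta_n\to\infty$. The bound itself is fine (Marcinkiewicz--Zygmund gives $\mb E\lvert\sqrt n(\bar L_j(\theta_0)-L(\theta_0))\rvert^{2+\tau}\leq C$, then Markov and the union bound), but the conclusion does not follow from the stated hypotheses. The paper only requires $\Delta_n\to\infty$ with $\Delta_n=o(\sqrt n)$; taking for instance $\Delta_n=\log n$ and $k\asymp n^{\tau/2}=o(n^{\tau})$ makes $k/\Delta_n^{2+\tau}\to\infty$. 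Even sharpening to a Fuk--Nagaev bound, which gives roughly $k/(\Delta_n^{2+\tau}n^{\tau/2})$ for the heavy-tail part, still does not force convergence to zero without an additional lower bound on the growth of $\Delta_n$ that the paper does not impose. Thus the ``collapse to the plain average on a high-probability event'' strategy is too coarse under the assumptions as written.

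The route taken in \cite{minsker2020asymptotic} (and the reason Assumption~\ref{ass:1}(iii) asks for $\rho^{(5)}$ bounded and Lipschitz) is a genuine linearization: one Taylor-expands the weights $\rho''\big(\sqrt n(\bar L_j(\theta_0)-\hL(\theta_0))/\Delta_n\big)$ to sufficiently high order and shows that the correction $\sqrt N\big(\partial_\theta\hL(\theta_0)-\tfrac{1}{N}\sum_i\partial_\theta\ell(\theta_0,X_i)\big)$ is $o_p(1)$ by controlling the remainder in expectation, not by forcing every argument into $[-1,1]$. That argument tolerates a positive fraction of blocks with $\rho''_j\neq 1$ and is what makes the result go through under the weak growth condition $k=o(n^{\tau})$ alone.
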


%############################
%\section{Additional technical lemmas.}
%############################

The following lemma demonstrates that empirical processes indexed by classes that are Lipschitz in parameter (for example, satisfying assumption \ref{ass:3}) are ``well-behaved.'' This fact is well-known but we outline the proof for the reader's convenience.

\begin{lemma}
\label{lemma:sup-power}
Let $\m F = \l\{f_\theta, \ \theta\in \Theta'\subseteq \mb R^d\r\}$ be a class of functions that is Lipschitz in parameter, meaning that 
$\lvert f_{\theta_1}(x) - f_{\theta_2}(x)\rvert\leq M(x)\|\theta_1 - \theta_2\|$. Moreover, assume that $\mb EM^{2}(X)<\infty$ for some $p\geq 1$. Then 
\ml{
\mb E \sup_{\theta_1, \theta_2\in \Theta'} \frac{1}{\sqrt{n}} \l\lvert  \sum_{j=1}^{n}  \l(f_{\theta_1}(X_j) - f_{\theta_2}(X_j) - P(f_{\theta_1} - f_{\theta_2})\r)\r\rvert 
\\
\leq C \sqrt{d} \,\diam(\Theta',\|\cdot\|) \mb E \|M \|_{L_2(\Pi_n)}.
}
\end{lemma}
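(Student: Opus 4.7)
The plan is to combine a standard symmetrization step with Dudley's entropy integral, reducing the bound to a covering number estimate for $\Theta' \subseteq \mb R^d$ in the Euclidean metric. First, by the symmetrization inequality (e.g.\ van der Vaart and Wellner 1996, Lemma 2.3.1),
\be
\mb E \sup_{\theta_1,\theta_2 \in \Theta'} \l\lvert \frac{1}{\sqrt n}\sum_{j=1}^{n} \l(f_{\theta_1}(X_j) - f_{\theta_2}(X_j) - P(f_{\theta_1} - f_{\theta_2})\r)\r\rvert \leq 4\, \mb E \sup_{\theta_1,\theta_2\in\Theta'}\l\lvert \frac{1}{\sqrt n}\sum_{j=1}^n \xi_j \l( f_{\theta_1}(X_j) - f_{\theta_2}(X_j)\r)\r\rvert,
\ee
where $\xi_1,\ldots,\xi_n$ are i.i.d.\ Rademacher signs independent of the sample.

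Next, I condition on $X_1,\ldots,X_n$ and equip $\Theta'$ with the random pseudo-metric $d_n(\theta_1,\theta_2)^2 = n^{-1}\sum_{j=1}^n (f_{\theta_1}(X_j) - f_{\theta_2}(X_j))^2$. The Lipschitz hypothesis yields
\be
d_n(\theta_1,\theta_2) \leq \|M\|_{L_2(\Pi_n)}\,\|\theta_1 - \theta_2\|,
\ee
so by Hoeffding's lemma the conditional process $\theta \mapsto n^{-1/2}\sum_j \xi_j f_\theta(X_j)$ is sub-Gaussian with respect to $d_n$. Dudley's chaining bound then gives
\be
\mb E_\xi \sup_{\theta_1,\theta_2\in\Theta'} \l\lvert \frac{1}{\sqrt n}\sum_{j=1}^n \xi_j(f_{\theta_1} - f_{\theta_2})(X_j)\r\rvert \leq C\int_0^{D_n}\sqrt{\log N(t,\Theta',d_n)}\,dt,
\ee
where $D_n$ is the $d_n$-diameter of $\Theta'$.

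Finally, I translate the covering numbers from $d_n$ to the Euclidean metric via $N(t,\Theta',d_n) \leq N\l(t/\|M\|_{L_2(\Pi_n)},\Theta',\|\cdot\|\r)$, while a standard volumetric estimate in $\mb R^d$ gives $N(\delta,\Theta',\|\cdot\|) \leq (3\diam(\Theta',\|\cdot\|)/\delta)^d$. Substituting and changing variables $u = t/(\|M\|_{L_2(\Pi_n)}\diam(\Theta',\|\cdot\|))$ reduces the entropy integral to $C\sqrt d\,\diam(\Theta',\|\cdot\|)\,\|M\|_{L_2(\Pi_n)}\int_0^1 \sqrt{\log(3/u)}\,du$, and taking expectation over the sample absorbs the finite constant produced by the last integral.

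The argument is essentially bookkeeping; the one mildly delicate point is that the sub-Gaussian parameter appearing in Dudley's bound is the \emph{random} quantity $\|M\|_{L_2(\Pi_n)}$, so one must be careful to factor it out of the entropy integral before passing the outer expectation in order to end up with $\mb E\|M\|_{L_2(\Pi_n)}$ as stated, rather than a moment of higher order.
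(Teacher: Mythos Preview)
Your proof is correct and follows essentially the same route as the paper: symmetrization, then Dudley's entropy integral for the conditionally sub-Gaussian Rademacher process with respect to the empirical pseudo-metric $d_n$, followed by the Lipschitz comparison $d_n(\theta_1,\theta_2)\leq \|M\|_{L_2(\Pi_n)}\|\theta_1-\theta_2\|$ and the standard volumetric covering bound in $\mb R^d$. Your remark about factoring the random $\|M\|_{L_2(\Pi_n)}$ out of the entropy integral before taking the outer expectation is exactly the point needed to land on $\mb E\|M\|_{L_2(\Pi_n)}$ rather than a higher moment; the paper in fact ends its display with $\mb E^{1/2}\|M\|_{L_2(\Pi_n)}^2$, so your handling of this step is, if anything, slightly more faithful to the stated bound.
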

\begin{proof}
Let $\eps_1,\ldots,\eps_n$ be i.i.d. random signs that are independent from $X_1,\ldots,X_n$. Symmetrization inequality \cite{van1996weak} yields that 
\begin{multline*}
\mb E \sup_{\theta_1, \theta_2\in \Theta'} \frac{1}{\sqrt{n}} \l\lvert  \sum_{j=1}^{n}  \l(f_{\theta_1}(X_j) - f_{\theta_2}(X_j) - P(f_{\theta_1} - f_{\theta_2})\r)\r\rvert  
\\
\leq C\mb E \sup_{\theta_1, \theta_2\in \Theta'} \frac{1}{\sqrt{n}} \l\lvert  \sum_{j=1}^{n}  \eps_j\l(f_{\theta_1}(X_j) - f_{\theta_2}(X_j) \r)\r\rvert   
\\
= C\mb E_X \mb E_\eps \sup_{\theta_1, \theta_2\in \Theta'} \frac{1}{\sqrt{n}} \l\lvert  \sum_{j=1}^{n}  \eps_j\l(f_{\theta_1}(X_j) - f_{\theta_2}(X_j) \r)\r\rvert .
\end{multline*}
As the process $f\mapsto \frac{1}{\sqrt{n}}  \sum_{j=1}^{n}  \eps_j\l(f_{\theta_1}(X_j) - f_{\theta_2}(X_j) \r)$ is sub-Gaussian conditionally on $X_1,\ldots,X_n$, its (conditional) $L_p$-norms are equivalent to $L_1$ norm. Hence, Dudley's entropy bound implies that 
\begin{multline*}
\mb E_\eps \sup_{\theta_1, \theta_2\in \Theta'} \frac{1}{\sqrt{n}} \l\lvert  \sum_{j=1}^{n}  \eps_j\l(f_{\theta_1}(X_j) - f_{\theta_2}(X_j) \r)\r\rvert  
\\
\leq C  \mb E_\eps \sup_{\theta_1, \theta_2\in \Theta'} \frac{1}{\sqrt{n}} \l\lvert  \sum_{j=1}^{n}  \eps_j\l(f_{\theta_1}(X_j) - f_{\theta_2}(X_j) \r)\r\rvert  
\leq C \int_{0}^{D_n(\Theta')} H^{1/2}(z,T_n,d_n) dz,
\end{multline*}
where $d^2_n(f_{\theta_1},f_{\theta_2}) = \frac{1}{n}\sum_{j=1}^n \l( f_{\theta_1}(X_j) - f_{\theta_2}(X_j)\r)^2$, 
$T_n = \l\{ (f_{\theta}(X_1),\ldots, f_{\theta}(X_n)), \ \theta \in \Theta'\r\}\subseteq \mb R^n$ and $D_n(\Theta')$ is the diameter of $\Theta$ with respect to the distance $d_n$. 
As $f_{\theta}(\cdot)$ is Lipschitz in $\theta$, we have that 
$d^2_n(f_{\theta_1},f_{\theta_2}) \leq \frac{1}{n}\sum_{j=1}^n M^2(X_j) \|\theta_1 - \theta_2\|^2$, implying that $D_n(\Theta')\leq \|M\|_{L_2(\Pi_n)}\diam(\Theta',\|\cdot\|)$ and
\begin{equation}
H(z,T_n,d_n)\leq H\l(z/\| M \|_{L_2(\Pi_n)},\Theta',\|\cdot\|\r) 
\leq \log\l(C\frac{\diam(\Theta', \|\cdot\|)\, \|M \|_{L_2(\Pi_n)}}{z}\r)^d.
\end{equation}
Therefore, 
\[
\int_{0}^{D_n(\Theta')} H^{1/2}(z,T_n,d_n) dz \leq C \sqrt{d} \diam(\Theta',\|\cdot\|) \cdot \| M \|_{L_2(\Pi_n)}
\]
and 
\begin{equation}
\mb E_X \mb E_\eps \sup_{\theta_1, \theta_2\in \Theta'} \frac{1}{\sqrt{n}} \l\lvert  \sum_{j=1}^{n}  \eps_j\l(f_{\theta_1}(X_j) - f_{\theta_2}(X_j) \r)\r\rvert  
\leq C \sqrt{d} \diam(\Theta',\|\cdot\|) \mb E^{1/2} \|M \|^2_{L_2(\Pi_n)}.
\end{equation}
\end{proof}

\noindent Next is a lemma needed for the proof of the second part of Theorem \ref{th:1}. Let $\wh L(\theta)'$ be the ``outlier-free'' version of $\wh L(\theta)$.
%The following lemma establishes $\l\lvert \wh L'(\theta) - \wh L(\theta) \r\rvert \leq \frac{C(\rho)\mathcal{O}\Delta_n}{k\sqrt{n}}$.
\begin{lemma}
\label{lemma:outlier-effect}
    Let $\wh L'(\theta_0)$ be defined by \eqref{eq:lhat_no_outliers}. Then,
    \begin{align*}
        \l\lvert \wh L(\theta) - \wh L'(\theta) \r\rvert \leq \frac{C(\rho)\mathcal{O}\Delta_n}{k\sqrt{n}}\,,
    \end{align*}
uniformly over $\Theta$ with probability converging to $1$ as $N\to\infty$. Here, $C(\rho)$ is a universal constant only related to $\rho$.
\end{lemma}
\begin{proof}
    Note that
    \begin{multline*}
        0 = \sum_{j=1}^k\rho'\l(\sqrt{n}\frac{\Bar{L}_j(\theta) - \wh L(\theta)}{\Delta_n}\r) - \sum_{j=1}^k\rho'\l(\sqrt{n}\frac{\Bar{L}'_j(\theta) - \wh L'(\theta)}{\Delta_n}\r) \\
        = \l(\sum_{j \in \mathcal{I}}\rho'\l(\sqrt{n}\frac{\Bar{L}_j(\theta) - \wh L(\theta)}{\Delta_n}\r) - \sum_{j \in \mathcal{I}}\rho'\l(\sqrt{n}\frac{\Bar{L}'_j(\theta) - \wh L'(\theta)}{\Delta_n}\r)\r) \\
        + \l(\sum_{j \in [k]\backslash \mathcal{I}}\rho'\l(\sqrt{n}\frac{\Bar{L}_j(\theta) - \wh L(\theta)}{\Delta_n}\r) - \sum_{j \in [k]\backslash \mathcal{I}}\rho'\l(\sqrt{n}\frac{\Bar{L}'_j(\theta) - \wh L(\theta)'}{\Delta_n}\r)\r)\,.
    \end{multline*}
    Here, $\mathcal{I} = \{j: \Bar{L}_j'(\theta) = \Bar{L}_j(\theta)\}$, i.e., the set of indices of blocks that do not contain outliers. Since the number of outliers is $\mathcal{O}$, then the cardinality of $[k]\backslash \mathcal{I}$ is at most $\mathcal{O}$. Observe that $\l\lvert \sum_{j \in [k]\backslash \mathcal{I}}\rho'\l(\sqrt{n}\frac{\Bar{L}_j(\theta) - \wh L(\theta)}{\Delta_n}\r) - \sum_{j \in [k]\backslash \mathcal{I}}\rho'\l(\sqrt{n}\frac{\Bar{L}'_j(\theta) - \wh L'(\theta)}{\Delta_n}\r)\r\rvert \leq \mathcal{O}\|\rho'\|$. 
    Consequently,
    \begin{multline}
    \label{eq:bounded_rho'}
        \l\lvert \sum_{j \in \mathcal{I}}\rho'\l(\sqrt{n}\frac{\Bar{L}_j(\theta) - \wh L(\theta)}{\Delta_n}\r) - \sum_{j \in \mathcal{I}}\rho'\l(\sqrt{n}\frac{\Bar{L}'_j(\theta) - \wh L'(\theta)}{\Delta_n}\r) \r\rvert \\
        = \l\lvert \sum_{j \in \mathcal{I}}\rho'\l(\sqrt{n}\frac{\Bar{L}_j(\theta) - \wh L(\theta)}{\Delta_n}\r) - \sum_{j \in \mathcal{I}}\rho'\l(\sqrt{n}\frac{\Bar{L}_j(\theta) - \wh L'(\theta)}{\Delta_n}\r) \r\rvert \leq \mathcal{O}\|\rho'\|\,. 
    \end{multline}
    This implies that
    \begin{align*}
        \max \l(\l\lvert \sqrt{n}\frac{\Bar{L}'_j(\theta) - \wh L(\theta)}{\Delta_n}\r\rvert, \l\lvert\sqrt{n}\frac{\Bar{L}'_j(\theta) - \wh L'(\theta)}{\Delta_n}\r\rvert\r) \leq 1\,,
    \end{align*}
    for $j \in J \subset \mathcal{I}$ where $\lvert J \rvert \leq \frac{k}{M_N}$. 
    To see this, note that since we assume $\l\lvert \wh L'(\theta) - \wh L(\theta) \r\rvert \geq \frac{2\|\rho'\|\mathcal{O}\Delta_n}{k\sqrt{n}}$, if $\lvert J \rvert > \frac{k}{M_N}$,
    \begin{multline*}
        \l\lvert \sum_{j \in \mathcal{I}}\rho'\l(\sqrt{n}\frac{\Bar{L}'_j(\theta) - \wh L(\theta)}{\Delta_n}\r) - \sum_{j \in \mathcal{I}}\rho'\l(\sqrt{n}\frac{\Bar{L}'_j(\theta) - \wh L'(\theta)}{\Delta_n}\r) \r\rvert \\
        \geq \l\lvert \sum_{j \in J}\rho'\l(\sqrt{n}\frac{\Bar{L}'_j(\theta) - \wh L(\theta)}{\Delta_n}\r) - \sum_{j \in J}\rho'\l(\sqrt{n}\frac{\Bar{L}'_j(\theta) - \wh L'(\theta)}{\Delta_n}\r) \r\rvert \\
        = \frac{\sqrt{n}}{\Delta_n}\lvert J\rvert  \l\lvert \wh L'(\theta) - \wh L(\theta) \r\rvert > \mathcal{O}\|\rho'\|\,,
    \end{multline*}
    where the equality is given by the fact that $\rho'(z) = z$ for $\lvert z \rvert \leq 1$ in Assumption \ref{ass:1}. This contradicts with the previous conclusion in \eqref{eq:bounded_rho'}. Since $\sup_{\theta \in \Theta} \sqrt{n} \lvert \wh L(\theta) - L(\theta) \rvert$ and $\sup_{\theta \in \Theta} \sqrt{n} \lvert \wh L'(\theta) - L(\theta) \rvert$ both converge to $0$ by Lemma \ref{lemma:unif}, it suffices to show that
    \begin{align*}
        \mb P \l(\exists\theta, \l\lvert \sqrt{n}\frac{\Bar{L}'_j(\theta) - L(\theta)}{\Delta_n} \r\rvert \leq \frac{1}{2} \text{ for } j \in J' \text{ where } \lvert J' \rvert \leq \frac{k}{M_N} + \mathcal{O}\r)
    \end{align*}
    converges to $0$. Note that this probability is bounded by
    \begin{align*}
        \mb P \l(\sup_\theta \l\lvert \sqrt{n}\frac{\Bar{L}'_j(\theta) - L(\theta)}{\Delta_n} \r\rvert \leq \frac{1}{2} \text{ for } j \in J' \text{ where } \lvert J' \rvert \leq \frac{k}{M_N} + \mathcal{O}\r)\,.
    \end{align*}
    Since $\Theta$ is a compact set, there exist $\theta_1, \theta_2, \ldots, \theta_R$ such that $\bigcup_{j=1}^R B(\theta_j, \delta(\theta_j)) \supset \Theta$ where $\delta(\theta_j)$ is defined in assumption \ref{ass:3}. By Lemma \ref{lemma:sup-power},
    \begin{multline*}
        \mb P \l( \sup_{\theta} \l\lvert \sqrt{n}\frac{\l(\Bar{L}'_j(\theta) - L(\theta)\r) - \l(\Bar{L}'_j(\theta_0) - L(\theta_0)\r)}{\Delta_n} \r\rvert \geq \frac{1}{4} \r) \\
        \leq \sum_{j=1}^R \mb P\l(\sup_{\theta \in B(\theta_j, \delta(\theta_j))} \l\lvert \sqrt{n}\frac{\l(\Bar{L}'_j(\theta) - L(\theta)\r) - \l(\Bar{L}'_j(\theta_0) - L(\theta_0)\r)}{\Delta_n} \r\rvert \geq \frac{1}{4}\r) \\
        \leq 4 \sum_{j=1}^R \mb E \sup_{\theta \in B(\theta_j, \delta(\theta_j))} \l\lvert \sqrt{n}\frac{\l(\Bar{L}'_j(\theta) - L(\theta)\r) - \l(\Bar{L}'_j(\theta_0) - L(\theta_0)\r)}{\Delta_n} \r\rvert \\
        \leq C\Delta_n^{-1}\sum_{j=1}^R \mb E \|\m V(X, \delta(\theta_j))\|_{L_2(\Pi_n)} \rightarrow 0\,,
    \end{multline*}
    since $\Delta_n \rightarrow \infty$. Therefore,
    \begin{multline*}
        \mb P\l(\sup_\theta \l\lvert \sqrt{n}\frac{\Bar{L}'_j(\theta) - L(\theta)}{\Delta_n} \r\rvert \leq \frac{1}{2}\r) \geq
        \mb P\l(\l\lvert \sqrt{n}\frac{\Bar{L}'_j(\theta_0) - L(\theta_0)}{\Delta_n} \r\rvert \leq \frac{1}{4}\r) \\ -  \mb P \l( \sup_{\theta} \l\lvert \sqrt{n}\frac{\l(\Bar{L}'_j(\theta) - L(\theta)\r) - \l(\Bar{L}'_j(\theta_0) - L(\theta_0)\r)}{\Delta_n} \r\rvert \geq \frac{1}{4} \r)\,,
    \end{multline*}
    which is lower bounded for sufficiently large $N$ and small $\varepsilon$. Therefore, by Hoeffding's inequality,
    \begin{multline*}
        \mb P \l(\sup_\theta \l\lvert \sqrt{n}\frac{\Bar{L}'_j(\theta) - L(\theta)}{\Delta_n} \r\rvert \leq \frac{1}{2} \text{ for } j \in J' \text{ where } \lvert J' \rvert \leq \frac{k}{M_N} + \mathcal{O}\r) \\
        \leq \exp{-2k\l(\mb P \l(\sup_\theta \l\lvert \sqrt{n}\frac{\Bar{L}'_j(\theta) - L(\theta)}{\Delta_n} \r\rvert \leq \frac{1}{2}\r) - \l(\frac{1}{M_N} + \frac{\mathcal{O}}{k}\r)\r)^2} \rightarrow 0\,,
    \end{multline*}
    as $k \rightarrow \infty$ as long as $\mathcal{O} / k \rightarrow 0$.
\end{proof}

\noindent The next are three lemmas that we rely on in the proof of Theorem \ref{th:2}. Define 
\[
r_N(\theta) = \l( \partial_\theta L(\theta) - \partial_\theta \hL(\theta) \r) - \l( \partial_\theta L(\theta_0)- \partial_\theta \hL(\theta_0)\r).
\]

\begin{lemma}\label{lemma:taylor_expansion}
For any $\theta\in \Theta$,
\begin{align*}
    \wh L(\theta) = \wh L(\theta_0) + (\theta - \theta_0)^T \partial_\theta \wh L(\theta_0) + \frac{1}{2}(\theta - \theta_0)^T \partial^2_\theta L(\theta_0)(\theta - \theta_0) + R_1(\theta) + R_2(\theta)\,,
\end{align*}
where $R_1(\theta)$ and $R_2(\theta)$ are two functions such that for any $\theta$ satisfying $\|\theta - \theta_0\| \leq \delta$,
\[
R_1(\theta) \leq \|\theta - \theta_0\| \sup_{\theta}\|r_N(\theta)\|\,,
\quad \text{and} \quad
    \sup_{\theta}\l\lvert \frac{R_2(\theta)}{\|\theta - \theta_0\|^2}\r\rvert  \rightarrow 0\,,
\]
as $\delta \rightarrow 0$. 
%Here, $r_N(\theta) = \l(\partial_\theta \wh L(\theta) - \partial_\theta L(\theta)\r) - \l(\partial_\theta \wh L(\theta_0) - \partial_\theta L(\theta_0)\r)$.
\end{lemma}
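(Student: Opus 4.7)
The plan is to start from the fundamental theorem of calculus applied to $\wh L$ (whose differentiability in a neighbourhood of $\theta_0$ has already been justified in the sketch of Theorem \ref{th:2}). Writing
\[
\wh L(\theta) - \wh L(\theta_0) = \int_0^1 (\theta-\theta_0)^T \partial_\theta \wh L(\theta_0 + t(\theta-\theta_0))\,dt,
\]
I would subtract the linear term $(\theta-\theta_0)^T \partial_\theta \wh L(\theta_0)$ from both sides, so that the remaining integrand becomes $\partial_\theta \wh L(\theta_0 + t(\theta-\theta_0)) - \partial_\theta \wh L(\theta_0)$. The definition of $r_N$ lets me rewrite this increment as
\[
\partial_\theta \wh L(\theta') - \partial_\theta \wh L(\theta_0) = \bigl[\partial_\theta L(\theta') - \partial_\theta L(\theta_0)\bigr] - r_N(\theta'),
\]
which is the key algebraic step that separates the ``deterministic'' piece (governed by $L$) from the ``stochastic fluctuation'' piece (governed by $r_N$).

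Next I would split the integral accordingly. The $r_N$-contribution is taken as $R_1$, explicitly
\[
R_1(\theta) := -\int_0^1 (\theta-\theta_0)^T r_N(\theta_0 + t(\theta-\theta_0))\,dt,
\]
and Cauchy–Schwarz pulled inside the integral gives the required bound $|R_1(\theta)|\le \|\theta-\theta_0\|\sup_{\theta'}\|r_N(\theta')\|$ immediately. The remaining $L$-part,
\[
\int_0^1 (\theta-\theta_0)^T\bigl[\partial_\theta L(\theta_0 + t(\theta-\theta_0)) - \partial_\theta L(\theta_0)\bigr]dt,
\]
is then expanded using Fréchet differentiability of $\partial_\theta L$ at $\theta_0$, which is exactly what Assumption \ref{ass:2} provides (existence of $\partial_\theta^2 L(\theta_0)$). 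Writing $\partial_\theta L(\theta_0 + s) - \partial_\theta L(\theta_0) = \partial_\theta^2 L(\theta_0)\, s + \varepsilon(s)$ with $\|\varepsilon(s)\|/\|s\|\to 0$, the leading piece integrates to $\tfrac{1}{2}(\theta-\theta_0)^T \partial_\theta^2 L(\theta_0)(\theta-\theta_0)$ via $\int_0^1 t\,dt = \tfrac12$, and the residual becomes
\[
R_2(\theta) := \int_0^1 (\theta-\theta_0)^T \varepsilon\bigl(t(\theta-\theta_0)\bigr)\,dt.
\]

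For $R_2$, the bound $|R_2(\theta)| \le \|\theta-\theta_0\| \int_0^1 \|\varepsilon(t(\theta-\theta_0))\|\,dt$ combined with $\|\varepsilon(s)\|=o(\|s\|)$ gives $\sup_{\|\theta-\theta_0\|\le\delta}|R_2(\theta)|/\|\theta-\theta_0\|^2 \to 0$ as $\delta\to 0$, which is precisely what the lemma claims. I do not expect any serious obstacle here: the only subtle point is that the differentiability of $\partial_\theta L$ at $\theta_0$ is a pointwise statement, not a uniform one, but the uniform-in-$\theta$ control of $R_2$ on $\{\|\theta-\theta_0\|\le\delta\}$ follows automatically because the error $\varepsilon(s)$ depends only on $s = t(\theta-\theta_0)$, whose norm is uniformly controlled on that set. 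The whole argument is therefore a deterministic Taylor expansion of $\wh L$, with the randomness isolated entirely inside $R_1$ via the $r_N$ term.
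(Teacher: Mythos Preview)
Your proposal is correct and follows essentially the same route as the paper: the fundamental theorem of calculus along the segment $\theta_0 + t(\theta-\theta_0)$, the algebraic identity $\partial_\theta \wh L(\theta') - \partial_\theta \wh L(\theta_0) = [\partial_\theta L(\theta') - \partial_\theta L(\theta_0)] - r_N(\theta')$, and the Fr\'echet expansion of $\partial_\theta L$ at $\theta_0$ to isolate the quadratic term, with $R_1$ collecting the $r_N$-integral and $R_2$ the $o(\|\theta-\theta_0\|)$ residual. The only cosmetic difference is that your $R_1$ carries a minus sign (which is the correct one from the definition of $r_N$), while the paper's display drops it; since only $\|r_N\|$ enters the bound, this is immaterial.
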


Let $G_\theta(t) = \wh L\l(\theta_0 + t v\r)$ where $v = \theta - \theta_0$ and note that $G'_\theta(t) = v^T\partial_\theta \wh L(\theta_0 + tv)$. Then
\begin{equation}
\label{eq:f1}
    \wh L(\theta) = \wh L(\theta_0) + \int_0^1  G'_\theta(s)ds = \wh L(\theta_0) +\int_0^1 G'_\theta(0)ds +  \int_0^1 \l( G'_\theta(s) - G'_\theta(0)\r) ds\,.
\end{equation}
The first integral equals $\partial_\theta \wh L(\theta_0)(\theta - \theta_0)$. For the second integral, note that the reasoning similar to the one behind equation \eqref{eq:g1} yields that for any $\theta'$
\begin{align*}
    \partial_\theta \wh L(\theta') - \partial_\theta \wh L(\theta_0) - r_N(\theta') = \partial_\theta^2 L(\theta_0)(\theta' - \theta_0) + R(\theta' - \theta_0)\,,
\end{align*}
where $R(\theta - \theta_0)$ is a vector-valued function such that $R(\theta - \theta_0) / \|\theta - \theta_0\| \rightarrow 0$ as $\theta \rightarrow \theta_0$. Therefore, for any $s\in(0,1)$
\begin{multline*}
    G'_\theta(s) - G'_\theta(0) = v^T\l(\partial_\theta \wh L(\theta_0 + sv) - \partial_\theta \wh L(\theta_0)\r) \\
    = s \,v^T\partial_\theta^2 L(\theta_0)v + v^T\,r_N(\theta_0 + sv) + v^TR(sv),
\end{multline*}
implying that
\begin{multline*}
    \int_0^1 \left( G'_\theta(s) - G'_\theta(0) \right)ds = \int_0^1  \left( s\,v^T\partial_\theta^2 L(\theta_0)v + v^T r_N(\theta_0 + sv) + v^T R(sv) \right) ds 
    \\
    = \frac{1}{2} v^T\partial_\theta^2 L(\theta_0)v + \int_0^1v^Tr_N(\theta_0 +sv)ds + \int_0^1 v^TR(sv)ds\,.
\end{multline*}
Denoting the last two terms $R_1(\theta)$ and $R_2(\theta)$ respectively, and combining the previous display with equation \eqref{eq:f1}, we deduce that 
\begin{align*}
    \wh L(\theta) = \wh L(\theta_0) + \partial_\theta \wh L(\theta_0)(\theta - \theta_0) + \frac{1}{2}(\theta - \theta_0)^T \partial^2_\theta L(\theta_0)(\theta - \theta_0) + R_1(\theta) + R_2(\theta).
\end{align*}
Moreover,
\begin{align*}
    \int_0^1v^Tr_N(\theta_0 +sv)ds \leq \int_0^1 \|v\|\sup_{t \in [0,1]}\|r_N(\theta_0 + tv)\|ds = \|v\|\sup_{t \in [0,1]}\|r_N(\theta_0 + tv)\|\,.
\end{align*}
Therefore, for $\delta>0$ such that $\|\theta - \theta_0\| \leq \delta$, $R_1(\theta) \leq \|\theta - \theta_0\| \sup_{\|\theta - \theta_0\| \leq \delta}\|r_N(\theta)\|\,.$
Furthermore, 
\[
\int_0^1 v^TR(sv)ds \leq \int_0^1 \|v\|\sup_{t \in [0,1]}\|R(tv)\|ds = \|v\|\sup_{t \in [0,1]}\|R(tv)\|\,.
\]
and 
\begin{align*}
    \frac{R_2(\theta_0 + tv)}{\|v\|^2} \leq \frac{\sup_{t \in [0,1]}\|R(tv)\|}{\|v\|} \leq \sup_{t \in [0,1]}\l\|\frac{R(tv)}{\|tv\|}\r\|\,,
\end{align*}
Thus, $\sup_{\|\theta - \theta_0\| \leq \delta}\l\lvert \|R_2(\theta)\|/\|\theta - \theta_0\|^2\r\rvert  \leq \sup_{\|\theta - \theta_0\| \leq \delta}\l\|R(\theta - \theta_0)\r\|/\|\theta - \theta_0\|$ which converges to $0$ as $\delta \rightarrow 0$.

\begin{lemma}
\label{lemma:negligible_remainder}
There exits a sequence $h^0_N$ such that $\|h^0_N\|\rightarrow\infty$, $\|h^0_N/\sqrt{N}\| \rightarrow 0$ and 
\begin{align*}
    \sup_{h:\|h\|\leq\|h^0_N\|}\l\lvert N\l(R_1(\theta_0 + h/\sqrt{N}) +R_2(\theta_0 + h/\sqrt{N}) \r)\r\rvert \overset{P}{\longrightarrow} 0\,,
\end{align*}
where $R_1$ and $R_2$ are defined in Lemma \ref{lemma:taylor_expansion}.
\end{lemma}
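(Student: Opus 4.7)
The plan is to construct $h^0_N$ by a diagonal extraction that coordinates the (stochastic) decay of the $R_1$ remainder, controlled by the asymptotic equicontinuity of $\partial_\theta \hL - \partial_\theta L$ (Lemma \ref{lemma:equicont}), against the (deterministic) decay of the $R_2$ remainder, controlled by the second-order Taylor expansion of $L$ at $\theta_0$.

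First I would translate Lemma \ref{lemma:taylor_expansion} into explicit bounds as a function of the radius. Setting $b_N := \|h^0_N\|$ and $a_N := b_N/\sqrt{N}$, for every $h$ with $\|h\|\leq b_N$ the point $\theta_0 + h/\sqrt{N}$ lies in the ball of radius $a_N$ around $\theta_0$, so Lemma \ref{lemma:taylor_expansion} yields
\begin{align*}
    \sup_{\|h\|\leq b_N} N\l\lvert R_1(\theta_0 + h/\sqrt{N})\r\rvert &\leq b_N \cdot \l(\sqrt{N}\,\tau_N(a_N)\r),\\
    \sup_{\|h\|\leq b_N} N\l\lvert R_2(\theta_0 + h/\sqrt{N})\r\rvert &\leq b_N^2 \cdot \eta(a_N),
\end{align*}
where $\tau_N(\delta) := \sup_{\|\theta - \theta_0\|\leq\delta}\|r_N(\theta)\|$ is stochastic, while $\eta(\delta) := \sup_{\|\theta - \theta_0\|\leq\delta}\lvert R_2(\theta)\rvert/\|\theta - \theta_0\|^2$ is a deterministic function with $\eta(\delta)\to 0$ as $\delta\to 0$ by Lemma \ref{lemma:taylor_expansion}.

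Next I would design $b_N$ to drive both bounds to zero via a diagonal argument. For each integer $m\geq 1$, pick $\delta_m\downarrow 0$ small enough that $\eta(\delta_m)\leq 1/m^4$, and then, using Lemma \ref{lemma:equicont}, pick $N_m \uparrow\infty$ satisfying both $m/\sqrt{N_m}\leq \delta_m$ and
\begin{align*}
    \pr{\sqrt{N}\,\tau_N(\delta_m) > 1/m^3} < 1/m \quad\text{for all } N\geq N_m.
\end{align*}
Define $\|h^0_N\| := m$ for $N_m\leq N < N_{m+1}$. Then $\|h^0_N\|\to\infty$ and $a_N = m/\sqrt{N} \leq \delta_m \to 0$, so both growth conditions hold. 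Plugging into the two displayed bounds, the $R_2$ term is at most $m^2 \eta(\delta_m)\leq 1/m^2$ deterministically, while the $R_1$ term is at most $m\cdot(1/m^3)=1/m^2$ with probability at least $1 - 1/m$. A triangle inequality then yields
\begin{align*}
    \sup_{\|h\|\leq\|h^0_N\|}\l\lvert N\l(R_1(\theta_0 + h/\sqrt{N}) + R_2(\theta_0 + h/\sqrt{N})\r)\r\rvert \overset{P}{\longrightarrow} 0.
\end{align*}

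The only real obstacle is the bookkeeping: one must coordinate three rates -- the radius $\delta_m$ in the equicontinuity statement, the diagonal threshold $N_m$, and the growth rate of $\|h^0_N\|$ itself -- so that the $R_1$ bound (scaling like $b_N \cdot \sqrt{N}\tau_N(a_N)$) and the $R_2$ bound (scaling like $b_N^2 \eta(a_N)$) can both be made negligible while $\|h^0_N\|\to\infty$. The delicate point is that $a_N$ and $b_N$ are tied through $b_N = a_N\sqrt{N}$, so one cannot shrink $a_N$ and grow $b_N$ independently; the diagonal-block construction above is precisely what resolves this coupling.
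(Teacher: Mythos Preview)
Your proposal is correct and follows essentially the same approach as the paper's proof: both rely on the bounds from Lemma \ref{lemma:taylor_expansion} together with the equicontinuity Lemma \ref{lemma:equicont}, followed by a diagonal/slow-growth extraction (the paper treats $R_1$ and $R_2$ separately, producing sequences $h^{(1)}_N$ and $h^{(2)}_N$ and then taking their minimum, whereas you build a single explicit diagonal sequence). One minor bookkeeping point: when you choose $\delta_m$, you should also require it to be small enough that Lemma \ref{lemma:equicont} guarantees $\limsup_N \pr{\sqrt{N}\,\tau_N(\delta_m)>1/m^3}<1/m$, not only $\eta(\delta_m)\leq 1/m^4$; since both constraints are satisfied for all sufficiently small $\delta$, this is immediate.
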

\begin{proof}[Proof of Lemma \ref{lemma:negligible_remainder}.]
Let $h^*_N$ be a sequence such that $\|h^*_N\| \rightarrow \infty$ and $\|h^*_N/\sqrt{N}\| \rightarrow 0$. In view of Lemma \ref{lemma:equicont},
\begin{align*}
    \sup_{h:\|h\| \leq \|h^*_N\|}\|\sqrt{N}r_N(\theta_0 + h/\sqrt{N})\| \overset{P}{\longrightarrow} 0\,,
\end{align*}
where $r_N$ is given in Lemma \ref{lemma:taylor_expansion}. Moreover, let $h^{(1)}_N$ be a sequence such that $\|h^{(1)}_N\| \rightarrow \infty$, $\|h^{(1)}_N\| \leq \|h^*_N\|$ and
\begin{align*}
    \|h^{(1)}_N\|\sup_{h:\|h\| \leq \|h^*_N\|}\|\sqrt{N}r_N(\theta_0 + h/\sqrt{N})\| \overset{P}{\longrightarrow} 0\,.
\end{align*}
Lemma \ref{lemma:taylor_expansion} implies that
\begin{align*}
    \sup_{h : \|h\| \leq \|h^{(1)}_N\|}\l\lvert N R_1(\theta_0 + h/\sqrt{N})\r\rvert  &\leq \sup_{h : \|h\| \leq \|h^{(1)}_N\|}\l\lvert \sqrt{N} \|h\| \sup_{h : \|h\| \leq \|h^{(1)}_N\|} \|r_N(\theta_0 + h/\sqrt{N})\|\r\rvert  \\
    & \leq \|h^1_N\|\sup_{h : \|h\| \leq \|h^{(1)}_N\|} \|\sqrt{N}r_N(\theta_0 + h/\sqrt{N})\|\overset{P}{\longrightarrow} 0\,.
\end{align*}
Similarly, let $h^{**}_N$ be a sequence such that $\|h^{**}_N\| \rightarrow \infty$ and $\|h^{**}_N/\sqrt{N}\| \rightarrow 0$. Lemma \ref{lemma:taylor_expansion} yields that
\begin{align*}
    \sup_{h:\|h\|\leq\|h^{**}_N\|}\l\lvert \frac{R_2(\theta_0 + h/\sqrt{N})}{\|h/\sqrt{N}\|^2}\r\rvert \rightarrow 0\,.
\end{align*}
Finally, let $h^{(2)}_N$ be the sequence such that $h^{(2)}_N \rightarrow \infty$, $\|h^{(2)}_N\| \leq \|h^{**}_N\|$ and
\begin{align*}
    \l\|h^{(2)}_N\r\|^2 \sup_{h:\|h\|\leq\|h^{**}_N\|}\l\lvert \frac{R_2(\theta_0 + h/\sqrt{N})}{\|h/\sqrt{N}\|^2}\r\rvert  \rightarrow 0\,.
\end{align*}
Then
\begin{align*}
    \sup_{h:\|h\|\leq \|h^{(2)}_N\|}\l\lvert NR_2(\theta_0 + h/\sqrt{N})\r\rvert  &\leq \|h_N^{(2)}\|^2\sup_{h:\|h\|\leq \|h^{(2)}_N\|}\l\lvert \frac{R_2(\theta_0 + h/\sqrt{N})}{\|h/\sqrt{N}\|^2}\r\rvert  
    \\
    &\leq \|h^{(2)}_N\|^2\sup_{h:\|h\|\leq \|h^{**}_N\|}\l\lvert \frac{R_2(\theta_0 + h/\sqrt{N})}{\|h/\sqrt{N}\|^2}\r\rvert  \rightarrow 0\,.
\end{align*}
Finally, take $h^0_N = \argmin_{h \in \{h^{(1)}_N,h^{(2)}_N\}}\|h\|$, and conclude using the triangle inequality.
\end{proof}

\begin{lemma}\label{lemma:normal_tail}
Let $\{U_n\}_{n \geq 1}$ be a sequence of random vectors that converges to $UZ$ weakly, where $Z$ is a standard random vector of dimension $d$ and $U$ is a $d \times d$ invertible matrix. Furthermore, let $V$ be a $d \times d$ symmetric positive definite matrix and $\{a_n\}_{n\geq 1}$ - a sequence of positive numbers converging to infinity. Then
\begin{align*}
    \int_{\|h\| \geq a_n}e^{-\frac{1}{2}(h-U_n)^TV(h-U_n)}d\mu(h) \overset{P}{\longrightarrow} 0\,.
\end{align*}
\end{lemma}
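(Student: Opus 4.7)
The plan is to recognize the integral as a Gaussian tail probability up to a normalizing constant and then exploit the tightness of $\{U_n\}$ implied by the weak convergence $U_n \overset{d}{\to} UZ$. Introduce an auxiliary Gaussian $W \sim \mathcal{N}(0, V^{-1})$, defined on a separate probability space and independent of $U_n$; completing the square (equivalently, the change of variables $h = w + U_n$) gives
\[
\int_{\|h\| \geq a_n} e^{-\frac{1}{2}(h-U_n)^T V(h-U_n)} dh = \frac{(2\pi)^{d/2}}{|V|^{1/2}} \, \mathbb{P}_W\!\left(\|W + U_n\| \geq a_n \,\big|\, U_n\right).
\]
Because the right-hand side depends on the sample only through $U_n$ via a deterministic measurable map $u \mapsto g_n(u) := c\,\mathbb{P}_W(\|W+u\| \geq a_n)$, where $c = (2\pi)^{d/2}/|V|^{1/2}$, it suffices to prove that $g_n(U_n) \overset{P}{\longrightarrow} 0$.

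The second step is a standard tightness argument. Since weak convergence implies tightness, for every $\eps > 0$ there exists $M = M(\eps)$ with $\sup_n \mathbb{P}(\|U_n\| > M) < \eps$. On the event $\{\|U_n\| \leq M\}$, the triangle inequality forces $\|W\| \geq a_n - M$ whenever $\|W + U_n\| \geq a_n$, and since $a_n \to \infty$ while $\|W\|$ is almost surely finite,
\[
\sup_{\|u\| \leq M} g_n(u) \leq c\,\mathbb{P}_W\!\left(\|W\| \geq a_n - M\right) \longrightarrow 0,
\]
and crucially this bound is uniform in $u$ over the ball $\{\|u\| \leq M\}$.

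Combining the two ingredients, for any $\eta > 0$,
\[
\mathbb{P}\!\left(g_n(U_n) > \eta\right) \leq \mathbb{P}(\|U_n\| > M) + \mathbb{P}\!\left(g_n(U_n) > \eta,\ \|U_n\| \leq M\right) \leq \eps + o(1),
\]
since the second summand is bounded by the (deterministic) quantity $\mathbf{1}\{\sup_{\|u\| \leq M} g_n(u) > \eta\}$, which vanishes as $n \to \infty$ by the previous display. Letting first $n \to \infty$ and then $\eps \to 0$ yields the claim. There is no deep obstacle here: the only conceptual point is to first isolate the randomness into $U_n$ and then decouple the Gaussian tail decay (deterministic and uniform on compact sets of $u$) from the tightness control on $U_n$. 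A direct pointwise argument on the integrand would not suffice, since $e^{-\frac{1}{2}(h-U_n)^T V(h-U_n)}$ is not monotone in $n$ for fixed $h$; the change-of-variable reformulation is what makes the uniform Gaussian tail bound available.
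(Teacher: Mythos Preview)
Your proof is correct and follows essentially the same route as the paper: both arguments use tightness of $\{U_n\}$ (from weak convergence) to restrict to a compact ball $\{\|U_n\|\le M\}$, and on that event bound the integral by a deterministic Gaussian-type tail that vanishes as $a_n\to\infty$. The only cosmetic difference is that you rewrite the integral as $c\,\mathbb{P}_W(\|W+U_n\|\ge a_n)$ via a change of variables, whereas the paper first bounds the quadratic form by $\lambda_{\min}(V)\|h-U_n\|^2$ and then uses $\|h-U_n\|\ge \|h\|-M$ directly in the exponent; the underlying idea is the same.
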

\begin{proof}[Proof of Lemma \ref{lemma:normal_tail}.]
Note that
\begin{align*}
    \int_{\|h\| \geq a_n}e^{-\frac{1}{2}(h-U_n)^TV(h-U_n)}dh \leq \int_{\|h\| \geq a_n}e^{-\frac{1}{2}\lambda_{\min}^V\|h-U_n\|^2}dh\,,
\end{align*}
where $\lambda_{\min}^V$ is the smallest eigenvalue of $V$. Let $C$ be an arbitrary positive constant and $B_n = \{\|U_n\| \leq C\}$, then on the set $B_n$,
\begin{align*}
    \int_{\|h\| \geq a_n}e^{-\frac{1}{2}\lambda_{\min}^V\|h-U_n\|^2}dh \leq
    \int_{\|h\| \geq a_n}e^{-\frac{1}{2}\lambda_{\min}^V(\|h\| - C)^2}dh \leq \delta
\end{align*}
for any $\delta > 0$ as $n \rightarrow \infty$. Note that $\mathbb{P}(B_n) \rightarrow \mathbb{P}(\|UZ\| \leq C) \geq \mathbb{P}(\|Z\|^2 \leq C(\lambda_{\min}^{U^TU})^{-1})$, where $\lambda_{\min}^{U^TU}$ is the smallest eigenvalue of $U^TU$. For an arbitrary $\varepsilon>0$, select $C$ such that $\mathbb{P}(\|Z\|^2 \leq C(\lambda_{\min}^{U^TU})^{-1}) \geq 1 - \varepsilon$. Then
\begin{align*}
    \int_{\|h\| \geq a_n}e^{-\frac{1}{2}(h-U_n)^TV(h-U_n)}dh \leq \delta
\end{align*}
with probability at least $1 - \varepsilon$, thus the assertion holds.
\end{proof}

%####################################################
\section{Details of the proof of Theorem \ref{th:2}.}
\label{proof:B}
%####################################################

We begin by filling in the parts omitted in the sketch given in Section \ref{sec:sketch_of_th2}. First, Lemma \ref{lemma:technical1} is implied directly by display (\ref{eq:decomposition}), while display \eqref{eq:kappa_expansion} follows in view of Lemma \ref{lemma:taylor_expansion}. 
%Next, a continuation of the sketch of the proof in Section \ref{sec:sketch_of_th2} will be presented.
Recall that 
\begin{align*}
A_N^1 &= \{h \in \Theta' : \|h\| \leq \|h^0_N\|\}, 
\\
A_N^2 &= \{h \in \Theta' : \|h^0_N\| < \|h\| \leq \delta\sqrt{N}\}, 
\\
A_N^3 &= \{h \in \Theta' : \delta\sqrt{N}< \|h\| \leq R\sqrt{N}\}
\end{align*} 
where $\delta$ is a sufficiently small positive number and $R$ is a sufficiently large so that $\{\theta \in \mb R^d : \|\theta - \theta_0\| \leq R\}$ contains $\Theta$ (see the discussion following \eqref{eq:kappa_expansion} for the additional details). For the integral over the set $A_N^1$, observe that
\begin{multline*}
    \int_{A_N^1}\l\lvert \pi(\theta_0 + h/\sqrt{N})e^{\kappa_N(h)} - \pi(\theta_0)e^{\lambda_N(h)}\r\rvert d\mu(h) \leq 
    \\
    \int_{A_N^1}\pi(\theta_0 + h/\sqrt{N})\l\lvert e^{\kappa_N(h)}-e^{\lambda_N(h)}\r\rvert d\mu(h) 
    \\
    + \int_{A_N^1}\l\lvert \pi(\theta_0 + h/\sqrt{N}) - \pi(\theta_0)\r\rvert e^{\lambda_N(h)}d\mu(h)\,.
\end{multline*}
To estimate the first term in the right-hand side of the display above, recall the definition (\ref{eq:lamda}) of $\lambda_N$ and note that
\begin{align*}
    \lambda_N(h) = -\sqrt{N}h^T\partial_\theta \hL(\theta_0) - \frac{1}{2}h^T\partial^2_\theta L(\theta_0)h - \frac{N}{2}(\partial_\theta \hL(\theta_0))^T(\partial_\theta^2L(\theta_0))^{-1}\partial_\theta\hL(\theta_0)\,.
\end{align*}
Therefore, recalling that $\kappa_N$ can be written as in display (\ref{eq:kappa_expansion}), we have that
\begin{align*}
    \kappa_N(h) = \lambda_N(h)- N\l(R_1(\theta_0 + h/\sqrt{N}) + R_2(\theta_0 + h/\sqrt{N})\r),
\end{align*}
hence
\begin{multline*}
    \int_{A_N^1}\pi(\theta_0 + h/\sqrt{N})\l\lvert e^{\kappa_N(h)}-e^{\lambda_N(h)}\r\rvert dh \\ \leq \sup_{h \in A_N^1}\l\{\pi(\theta_0 + h/\sqrt{N})\l\lvert e^{-N\l(R_1(\theta_0 + h/\sqrt{N}) + R_2(\theta_0 + h/\sqrt{N})\r) } - 1\r\rvert \r\}\int_{h\in A_N^1}e^{\lambda_N(h)}d\mu(h)\,.
\end{multline*}
Here, $\sup_{h \in A_N^1}\pi(\theta_0 + h/\sqrt{N}) \rightarrow \pi(\theta_0)$ by the continuity of $\pi$ while 
\begin{align*}
    \sup_{h \in A_N^1}\l\lvert e^{- N\l(R_1(\theta_0 + h/\sqrt{N}) + R_2(\theta_0 + h/\sqrt{N})\r)} - 1\r\rvert  \overset{P}{\longrightarrow} 0\
\end{align*}
by Lemma \ref{lemma:negligible_remainder}. Moreover, by the definition of $\lambda_N$ (see equation \eqref{eq:lamda}), the integral factor equals $(2\pi)^{d/2}/\lvert\partial_\theta^2 L(\theta_0)\rvert$. Therefore, the first integral converges to $0$ in probability. For the second integral, observe that
\begin{multline*}
    \int_{A_N^1}\l\lvert \pi(\theta_0 + h/\sqrt{N}) - \pi(\theta_0)\r\rvert e^{\lambda_N(h)}dh 
    \\
    \leq \sup_{h \in A_N^1}\l\lvert \pi(\theta_0 + h/\sqrt{N}) - \pi(\theta_0)\r\rvert \int_{\mathbb{R}^d}e^{\lambda_N(h)}dh 
    \\
    = \sup_{h \in A_N^1}\l\lvert \pi(\theta_0 + h/\sqrt{N}) - \pi(\theta_0)\r\rvert \frac{(2\pi)^{d/2}}{\lvert\partial_\theta^2L(\theta_0)\rvert} \rightarrow 0\,,
\end{multline*}
by Assumption \ref{ass:5}. Next, to estimate the integral over $A_N^2$, note that
\begin{multline*}
    \int_{A_N^2}\l\lvert \pi(\theta_0 + h/\sqrt{N})e^{\kappa_N(h)} - \pi(\theta_0)e^{\lambda_N(h)}\r\rvert dh
    \\
    \leq \int_{A_N^2}\l\lvert \pi(\theta_0 + h/\sqrt{N})e^{\kappa_N(h)}\r\rvert dh 
    \\
    + \int_{A_N^2}\l\lvert \pi(\theta_0)e^{\lambda_N(h)}\r\rvert dh\,.
\end{multline*}
For the first term, consider again the representation of $\kappa_N$ as
\begin{multline*}
    \kappa_N(h) = -\sqrt{N}h^T\partial_\theta \hL(\theta_0) - \frac{1}{2}h^T\partial^2_\theta L(\theta_0)h - \frac{N}{2}(\partial_\theta \hL(\theta_0))^T(\partial_\theta^2L(\theta_0))^{-1}\partial_\theta\hL(\theta_0) \\
    - N\l(R_1(\theta_0 + h/\sqrt{N}) + R_2(\theta_0 + h/\sqrt{N})\r)\,.
\end{multline*}
Since $\partial_\theta^2 L(\theta_0)$ is a positive definite matrix, $\lambda_{\min}\l(\partial_\theta^2 L(\theta_0)\r) > 0$ and, in view of Lemma \ref{lemma:taylor_expansion},
\begin{multline*}
    \l\lvert N\l(R_1(\theta_0 + h/\sqrt{N}) + R_2(\theta_0 + h/\sqrt{N})\r)\r\rvert  \\
    \leq \|h\|^2\sup_{\|h\| \leq \delta\sqrt{N}}\l(\l\|\frac{\sqrt{N}r_N(\theta_0 + h/\sqrt{N})}{2\|h_0\|}\r\| + \frac{\lvert R_2(\theta_0 + h/\sqrt{N})\rvert}{\|h/\sqrt{N}\|^2}\r) \\ 
    \leq \frac{\lambda_{\min}\l(\partial_\theta^2 L(\theta_0)\r)}{4}\|h\|^2 \leq \frac{1}{4}h^T\partial_\theta^2L(\theta_0)h\,,
\end{multline*}
with probability close to $1$, for sufficiently small $\delta$. Then
\begin{align}\label{eq:normal_plus_W_N}
    \kappa_N(h) &\leq -\sqrt{N}h^T\partial_\theta \hL(\theta_0) - \frac{1}{4}h^T\partial^2_\theta L(\theta_0)h - \frac{N}{2}(\partial_\theta \hL(\theta_0))^T(\partial_\theta^2L(\theta_0))^{-1}\partial_\theta\hL(\theta_0) \\
    &= -\l(h - \frac{1}{2} Z_N\r)^T\partial_\theta^2 L(\theta_0)\l(h - \frac{1}{2}Z_N\r) + W_N\,,
\end{align}
where $W_N = \frac{1}{2} N(\partial_\theta \hL(\theta_0))^T(\partial_\theta^2L(\theta_0))^{-1}\partial_\theta\hL(\theta_0)$ and by Lemma \ref{lemma:deriv-norm},
\begin{align}\label{eq:W_N}
    W_N \rightarrow Z^THZ
\end{align}
weakly with $Z \sim \mathcal{N}\l(0,I_d\r)$ and $I_d$ and $H$ being a $d$-dimensional identity matrix $\frac{1}{2}I(\theta_0)(\partial_\theta^2L(\theta_0))^{-1}I(\theta_0)$ respectively. 
%, and $A(\theta_0,\rho)$ is given in Lemma \ref{lemma:deriv-norm}. 
Therefore, for any positive increasing sequence $\{c_N\}$,
\begin{multline}\label{eq:A2_split}
    \int_{A_N^2}\l\lvert \pi(\theta_0 + h/\sqrt{N})e^{\kappa_N(h)}\r\rvert d\mu(h) \leq \\
    c_N\sup_{h \in A_N^2}\pi(\theta_0 + h/\sqrt{N})\int_{h \in A_N^2}e^{-\l(h - \frac{1}{2}Z_N\r)^T\partial_\theta^2L(\theta_0)\l(h - \frac{1}{2}Z_N\r)}d\mu(h) \\+ \sup_{h \in A_N^2}\pi(\theta_0 + h/\sqrt{N})e^{W_N}\int_{h \in A_N^2}e^{-\l(h - \frac{1}{2}Z_N\r)^T\partial_\theta^2L(\theta_0)\l(h - \frac{1}{2}Z_N\r)}d\mu(h) I\{W_N>\log c_N\}\,.
\end{multline}
It is easy to see that $\sup_{h \in A_N^2}\pi(\theta_0 + h/\sqrt{N})\int_{h \in A_N^2}e^{-\l(h - \frac{1}{2}Z_N\r)^T\partial_\theta^2L(\theta_0)\l(h - \frac{1}{2}Z_N\r)}d\mu(h)$ converges to $0$ in probability by Lemma \ref{lemma:normal_tail}. Then choosing $c_N$ such that
\begin{align*}
    c_N\sup_{h \in A_N^2}\pi(\theta_0 + h/\sqrt{N})\int_{h \in A_N^2}e^{-\l(h - \frac{1}{2}Z_N\r)^T\partial_\theta^2L(\theta_0)\l(h - \frac{1}{2}Z_N\r)}d\mu(h) \overset{P}{\longrightarrow} 0\,,
\end{align*}
guarantees that the first term converges to $0$. Meanwhile, the second term is $0$ with probability $\mathbb{P}\l(W_N \leq \log c_N\r)$. Note that for any $C$,
\begin{multline*}
    \mathbb{P}\l(W_N \leq \log c_N\r) = \mathbb{P}\l(W_N \leq \log c_N\r) - \mathbb{P}\l(W_N \leq \log C\r) \\
    + \mathbb{P}\l(W_N \leq \log C\r) - \mathbb{P}\l(Z^THZ \leq \log C\r) + \mathbb{P}\l(Z^THZ \leq \log C\r)\,.
\end{multline*}
$\mathbb{P}\l(W_N \leq \log c_N\r) - \mathbb{P}\l(W_N \leq \log C\r)$ is positive for $c_N$ large enough by tightness and $\mathbb{P}\l(W_N \leq \log C\r) - \mathbb{P}\l(Z^THZ \leq \log C\r)$ converges to $0$ by weak convergence. Thus, for $N$ large enough, $\mathbb{P}\l(W_N \leq \log c_N\r) \geq \mathbb{P}\l(Z^THZ \leq \log C\r)$ for any $C$. Since $I(\theta_0)$ and $\partial_\theta^2 L(\theta_0)$ are symmetric and positive definite, so is $H$. Note that for arbitrary $\varepsilon > 0$, one can select a sufficiently large $C$ such that
$\mathbb{P}\l(\|Z\|^2 > \frac{\log C}{\lambda_{\max} (H)}\r) \leq \varepsilon\,.$
Therefore,
\begin{align*}
  \mathbb{P}(Z^THZ > \log C) \leq \mathbb{P}(\lambda_{\max}(H)\|Z\|^2 > \log C) \leq \varepsilon\,.
\end{align*}
Thus, for $N$ large enough,
\begin{align*}
    \sup_{h \in A_N^2}\pi(\theta_0 + h/\sqrt{N})e^{W_N}\int_{h \in A_N^2}e^{-\l(h - \frac{1}{2}Z_N\r)^T\partial_\theta^2L(\theta_0)\l(h - \frac{1}{2}Z_N\r)}d\mu(h) I\{W_N>\log c_N\}
\end{align*}
equals $0$ with probability at least $1 - \varepsilon$ for any $\varepsilon$, hence the above term converges to $0$ in probability. We have so far shown that $\int_{A_N^2}\l\lvert \pi(\theta_0 + h/\sqrt{N})e^{\kappa_N(h)}\r\rvert d\mu(h)$ converges to $0$ in probability. Another application of Lemma \ref{lemma:normal_tail} implies that
\begin{align*}
    \int_{A_N^2}\l\lvert \pi(\theta_0)e^{\lambda_N(h)}\r\rvert d\mu(h) \leq \pi(\theta_0)\int_{\|h\| \geq a\log N}e^{\lambda_N(h)}d\mu(h) \overset{P}{\longrightarrow} 0\,,
\end{align*}
which shows the integral over $A_N^2$ converges to $0$ in probability. For the final part, the integral over $A_N^3$, observe again that
\begin{multline*}
    \int_{A_N^3}\l\lvert \pi(\theta_0 + h/\sqrt{N})e^{\kappa_N(h)} - \pi(\theta_0)e^{\lambda_N(h)}\r\rvert d\mu(h) \\ \leq \int_{A_N^3}\l\lvert \pi(\theta_0 + h/\sqrt{N})e^{\kappa_N(h)}\r\rvert d\mu(h) + \int_{A_N^3}\l\lvert \pi(\theta_0)e^{\lambda_N(h)}\r\rvert d\mu(h)\,.
\end{multline*}
As before, the second integral converges to $0$ in probability by Lemma \ref{lemma:normal_tail}. The first integral can be further estimated as
\begin{align*}
    \int_{A_N^3}\l\lvert \pi(\theta_0 + h/\sqrt{N})e^{\kappa_N(h)}\r\rvert d\mu(h)
    \leq \int_{a\leq\|h/\sqrt{N}\|\leq R}\l\lvert \pi(\theta_0 + h/\sqrt{N})e^{\kappa_N(h)}\r\rvert d\mu(h)\,.
\end{align*}
On the compact set $\{\theta: \delta \leq \|\theta - \theta_0\| \leq R\}$, $L(\theta) - L(\theta_0)$ attains a minimum positive value $t_1$. Moreover,
\begin{multline*}
    \inf_{a\leq\|h/\sqrt{N}\|\leq R}\hL(\theta_0 + h/\sqrt{N}) - \hL(\theta_0)
    \geq \inf_{\|h/\sqrt{N}\|\leq R}\l(\hL(\theta_0 + h/\sqrt{N}) - L(\theta_0 + h/\sqrt{N})\r) \\ + \inf_{a \leq \|h/\sqrt{N}\|\leq R}\l(L(\theta_0 + h/\sqrt{N}) - L(\theta_0)\r) + \l(L(\theta_0) - \hL(\theta_0)\r)\,.
\end{multline*}
By Lemma \ref{lemma:unif}, the terms in the first and third pair of brackets converge to $0$ in probability. Thus,
\begin{align}\label{eq:lower_bound_on_inf_of_ring}
    \inf_{a\leq\|h/\sqrt{N}\|\leq R}\hL(\theta_0 + h/\sqrt{N}) - \hL(\theta_0) \geq \frac{t_1}{2}\,,
\end{align}
with probability approaching 1. Therefore, by the definition of $\kappa_N(h)$ in \eqref{eq:kappa}, denoting $I_N = \inf_{a\leq\|h/\sqrt{N}\|\leq R}\hL(\theta_0 + h/\sqrt{N}) - \hL(\theta_0)$,
\begin{multline*}
    \int_{a\leq\|h/\sqrt{N}\|\leq R}\l\lvert \pi(\theta_0 + h/\sqrt{N})e^{\kappa_N(h)}\r\rvert d\mu(h) \\
    = \int_{a\leq\|h/\sqrt{N}\|\leq R}\l\lvert \pi(\theta_0 + h/\sqrt{N})e^{-NI_N - W_N}\r\rvert d\mu(h) \\
    \leq e^{-\frac{1}{2}Nt_1 - W_N}\int_{\mathbb{R}^d}\pi(\theta_0 + h/\sqrt{N})d\mu(h)
    \leq N^{d/2}e^{-\frac{1}{2}Nt_1 - W_N}\,,
\end{multline*}
where $W_N = \frac{1}{2} N(\partial_\theta \hL(\theta_0))^T(\partial_\theta^2L(\theta_0))^{-1}\partial_\theta\hL(\theta_0)$. Similar to the approach used in establishing the convergence in \eqref{eq:A2_split}, one can show $N^{d/2}e^{-\frac{1}{2}Nt_1 - W_N} \rightarrow 0$ with probability approaching 1. This establishes the relation (\ref{eq:equibvm}), and therefore completes the proof.

%####################################################
\section{Proof of Theorem \ref{th:1}, Part (2).}
\label{proof:C}
%####################################################

% \scolor{(I feel that it may be better to put this proof after the proof of Theorem 2, as there are so many references to it)}

In this section, we establish the second claim of Theorem \ref{th:1}. Let $\Bar{L}'_j(\theta) = \frac{1}{n}\sum_{i \in G_j} \l(\ell(\theta, \tilde{X}_i) - \ell(\theta', \tilde{X}_i)\r)$ for every $j$ and
\begin{equation}\label{eq:lhat_no_outliers}
\wh L'(\theta) := \argmin_{z \in \mb{R}} \sum_{j=1}^k\rho\l(\sqrt{n}\frac{\Bar{L}'_j(\theta) - z}{\Delta_n}\r)\,.
\end{equation}
In particular, in the outlier-free framework $\mathcal{O} = 0$, $\wh L'(\theta) = \wh L(\theta)$. 
Define 
\[
\wh H(\theta_0):=(\partial_\theta \wh L'(\theta_0))^T (\partial_\theta^2 L(\theta_0))^{-1} \partial_\theta \wh L'(\theta_0),
\]
and note that the following indentity holds:
    \begin{multline*}
        \wh \Pi_N \l(\l\| \theta - \theta_0\r\| \geq \delta_N \mid \mathcal{X}_N \r) = \frac{\int_{\|\theta - \tilde{\theta}_N\| \geq \delta_N} \pi(\theta)e^{-N\wh L(\theta)} d\theta}{\int_\Theta \pi(\theta)e^{-N\wh L(\theta)} d\theta} 
        \\
        = \frac{\int\limits_{\|\theta - \theta_0\| \geq \delta_N} \pi(\theta)e^{-N\l(\wh L(\theta) - \wh L'(\theta)\r)}e^{-N\l(\wh L'(\theta) - \wh L'(\theta_0)\r) - \frac{N}{2}\wh H(\theta_0) } d\theta}{\int_\Theta \pi(\theta)e^{-N\l(\wh L(\theta) - \wh L'(\theta)\r)}e^{-N\l(\wh L'(\theta) - \wh L'(\theta_0)\r) - \frac{N}{2}\wh H(\theta_0)} d\theta} \,.
    \end{multline*}
    Since $\l\lvert \wh{L}(\theta) - \wh{L}'(\theta)\r\rvert \leq C(\rho)\Delta_n\frac{\mathcal{O}}{k\sqrt{n}}$ with probability approaching $1$ in view of Lemma \ref{lemma:outlier-effect}, the right hand side of the display above can be bounded as
    \begin{align}
    \label{eq:posterior_tails}
e^{2C(\rho)\mathcal{O}\Delta_n\sqrt{n}}\,\frac{\int\limits_{\|\theta - \theta_0\| \geq \delta_N} \pi(\theta)e^{-N\l(\wh L'(\theta) - \wh L'(\theta_0)\r) - \frac{N}{2}\wh H(\theta_0)} d\theta}{\int_\Theta \pi(\theta)e^{-N \l(\wh L'(\theta) - \wh L'(\theta_0)\r) - \frac{N}{2}\wh H(\theta_0)} d\theta}\,.
    \end{align}
    We will first show that the denominator in expression \eqref{eq:posterior_tails} is of order $O\l(\frac{1}{N^{d/2}}\r)$ while the numerator is of order $o\l(\frac{1}{N^{d/2}e^{\mathcal{O}\sqrt{n}\Delta_n}}\r)$. 
    The following relation was formally established in the course of the proof of Theorem \ref{th:2} (specifically, see display \eqref{eq:equibvm} and the following discussion): 
    \[
    \int_{\mb R^d}\l\lvert\pi(\theta_0 + h/\sqrt{N})e^{\kappa_N(h)} - \pi(\theta_0)e^{\lambda_N(h)}\r\rvert d h \overset{P}{\longrightarrow} 0\,.
    \]
    It implies that
    \begin{multline*}
        \int_\Theta \pi(\theta)e^{-N \l(\wh L'(\theta) - \wh L'(\theta_0)\r) - \frac{N}{2}\wh H(\theta_0)} d\theta 
        \\
        = \frac{1}{N^{d/2}} \int_{\Theta'} \pi\l(\theta_0 + \frac{h}{\sqrt{N}}\r)e^{-N \l(\wh L'(\theta_0 + h/\sqrt{N}) - \wh L'(\theta_0)\r) - \frac{N}{2}\wh H(\theta_0)} dh \\
        = \frac{1}{N^{d/2}} \l(\int_{\Theta'} \pi(\theta_0) e^{-\frac{1}{2}(h - Z_N)^T \partial_\theta^2 L(\theta_0) (h - Z_N)}dh + o_p(1)\r)\,,
    \end{multline*}
    where $Z_N = -(\partial_\theta^2 L(\theta_0))^{-1} \sqrt{N} \partial_\theta \wh L'(\theta_0)$ and $\Theta' = \{h : \theta_0 + h/\sqrt{N} \in \Theta\}$. Lemma \ref{lemma:deriv-norm} implies that the sequence $\{Z_N\}_{N\geq 1}$ converges weakly to the multivariate normal distribution, therefore, it is tight. Hence, the integral above is of order $O_p(1)$. Thus, one can conclude that the denominator in \eqref{eq:posterior_tails} is $O_p(1/N^{d/2})$ and the quantity in \eqref{eq:posterior_tails} is
    \begin{align*}
        O_p(N^{d/2})e^{2C(\rho)\mathcal{O}\Delta_n \sqrt{n}} \int_{\|\theta - \theta_0\| \geq \delta_N} \pi(\theta)e^{-N\l(\wh L'(\theta) - \wh L'(\theta_0)\r) - \frac{N}{2}\wh H(\theta_0)} d\theta
    \end{align*}
    Thus, it suffices to find the order of
    \begin{align*}
        \int_{\|\theta - \theta_0\| \geq \delta_N} \pi(\theta)e^{-N\l(\wh L'(\theta) - \wh L'(\theta_0)\r) - \frac{N}{2}\wh H(\theta_0)} d\theta\,.
    \end{align*}
    Similar to the proof of Theorem \ref{th:2}, the integral can be divided into two parts:
    \begin{align*}
        \int_{\delta_N \leq \|\theta - \theta_0\| \leq C_1} \pi(\theta)e^{-N\l(\wh L'(\theta) - \wh L'(\theta_0)\r) - \frac{N}{2}\wh H(\theta_0)} d\theta\,,
    \end{align*}
    and
    \begin{align*}
        \int_{C_1 \leq \|\theta - \theta_0\| \leq C_2} \pi(\theta)e^{-N\l(\wh L'(\theta) - \wh L'(\theta_0)\r) - \frac{N}{2}\wh H(\theta_0)} d\theta\,.
    \end{align*}
    Let us bound the second part first. Note that $\inf_{C_a \leq \lvert \theta - \theta_0 \rvert \leq C_2}\l(\wh L'(\theta) - \wh L'(\theta_0)\r) \geq t$ for some $t > 0$ in view of the inequality \eqref{eq:lower_bound_on_inf_of_ring} with probability converging to $1$ as long as $N$ is large enough. Therefore,
    \begin{multline*}
        \int_{C_1 \leq \|\theta - \theta_0\| \leq C_2} \pi(\theta)e^{-N\l(\wh L'(\theta) - \wh L'(\theta_0)\r)- \frac{N}{2}\wh H(\theta_0)} d\theta \\ 
        \leq e^{-Nt - \frac{N}{2}\wh H(\theta_0)}\int_{C_1 \leq \|\theta - \theta_0\| \leq C_2} \pi(\theta) d\theta \leq e^{-Nt - \frac{N}{2}\wh H(\theta_0)}\,.
    \end{multline*}  
    % The second part can be bounded by the method explained in detail in the course of the proof of Theorem \ref{th:2} \scolor{(Give more precise reference to a place in the proof)}. Note that
    % \begin{align*}
    % \int_{C_1 \leq \|\theta - \theta_0\| \leq C_2} \pi(\theta)e^{-N\l(\wh L'(\theta) - \wh L'(\theta_0)\r)} d\theta \leq N^{d/2}e^{-\frac{1}{2}Nt}\,,
    % \end{align*}
    % for some constant $t > 0$ shown in Theorem \ref{th:2} \scolor{(again, be a bit more precise here)}.     
    Moreover, according to the relation \eqref{eq:W_N}, $\frac{N}{2}(\partial_\theta \wh L'(\theta_0))^T (\partial_\theta^2 L(\theta_0)^{-1}) \partial_\theta \wh L'(\theta_0)$ converges in distribution to $Z^THZ$ with $Z \sim \mathcal{N}\l(0,I_d\r)$ where $I_d$ and $H$ are the $d$-dimensional identity matrix and $\frac{1}{2}I(\theta_0)(\partial_\theta^2L(\theta_0))^{-1}I(\theta_0)$ respectively. This implies that the sequence $e^{-\frac{N}{2}(\partial_\theta \wh L'(\theta_0))^T (\partial_\theta^2 L(\theta_0)^{-1}) \partial_\theta \wh L'(\theta_0)}$ is tight, and thus $O_p(1)$. Therefore,
\begin{multline*}
    O_p(N^{d/2}) e^{2C(\rho)\mathcal{O}\Delta_n\sqrt{n}} \int_{C_1 \leq \|\theta - \theta_0\| \leq C_2} \pi(\theta)e^{-N\l(\wh L'(\theta) - \wh L'(\theta_0)\r) - \frac{N}{2}\wh H(\theta_0)} d\theta \\
    \leq O_p(N^{d/2}) e^{2C(\rho)\mathcal{O}\Delta_n\sqrt{n} - Nt}\rightarrow 0
\end{multline*}
in probability. To this end, it suffices to show the convergence of 
\begin{align*}
    O_p(N^{d/2}) e^{2C(\rho)\mathcal{O}\Delta_n\sqrt{n}} \int_{\delta_N \leq \|\theta - \theta_0\| \leq C_1} \pi(\theta)e^{-N\l(\wh L'(\theta) - \wh L'(\theta_0)\r) - \frac{N}{2}\wh H(\theta_0)} d\theta
\end{align*} 
% Now, for the first part, one can see from the proof of Theorem \ref{th:2} \scolor{(please refer to a specific place)} that
to $0$. Making the change of variables $h = (\theta - \theta_0)/\sqrt{N}$ and applying the inequality \eqref{eq:normal_plus_W_N}, we deduce that
\begin{multline*}
\int_{\delta_N \leq \|\theta - \theta_0\| \leq C_1} \pi(\theta)e^{-N\l(\wh L'(\theta) - \wh L'(\theta_0)\r) - \frac{N}{2}(\partial_\theta \wh L'(\theta_0))^T (\partial_\theta^2 L(\theta_0)^{-1}) \partial_\theta \wh L'(\theta_0)} d\theta \\
\leq C N^{-d/2}\int_{h: \sqrt{N}\delta_N \leq \|h\| \leq \sqrt{N}C_1} e^{-\l(h - \frac{1}{2} Z_N\r)^T\partial_\theta^2 L(\theta_0)\l(h - \frac{1}{2}Z_N\r) + \frac{N}{2}\wh H(\theta_0)} dh \,,
\end{multline*}
with probability approaching $1$ for sufficiently small $C_1$. Recall that $\frac{N}{2}\wh H(\theta_0) = O_p(1)$. Thus, the integral on the right hand side of the inequality can be bounded by $O_p(1) \mb P\l(\|U_N\| \geq \sqrt{N}\delta_N\mid  Z_N \r)$ where $U_N \sim \mathcal{N}\l(Z_N/2\,, \l(\partial_\theta^2 L(\theta_0)\r)^{-1}\r)$. Note that
\begin{multline*}
    \mb P\l(\|U_N\| \geq \sqrt{N}\delta_N\mid  Z_N \r) \\
    \leq \mb P\l(\|U_N - Z_N/2\| \geq \sqrt{N}\delta_N / 2 \mid Z_N\r)  + I\{\|Z_N / 2\| \geq \sqrt{N}\delta_N / 2\}\,.
\end{multline*}
Since $Z_N$ converges to a centered multivariate normal random variable in distribution by Lemma \ref{lemma:deriv-norm}, $I\{\|Z_N / 2\| \geq \sqrt{N}\delta_N / 2\}$ is $0$ with probability approaching $1$. Next, let $A$ be the Cholesky factor of $\l(\partial_\theta^2 L(\theta_0)\r)^{-1}$, that is, $A^TA = \l(\partial_\theta^2 L(\theta_0)\r)^{-1}$. Then
\begin{multline*}
    \mb P\l(\|U_N - Z_N / 2\| \geq \sqrt{N}\frac{\delta_N}{2}\r) = \mb P\l(\l\|A^T(A^T)^{-1}\l(U_N - Z_N / 2\r)\r\| \geq \sqrt{N}\frac{\delta_N}{2}\r) \\ \leq \mb P\l(\l\|(A^T)^{-1}\l(U_N - Z_N / 2\r)\r\| \geq \sqrt{N}\frac{\delta_N}{2\| A^T \|}\r) \\ = \mb P\l(\l\|(A^T)^{-1}\l(U_N - Z_N / 2\r)\r\|^2 \geq \frac{N\delta^2_N}{4\| A^T \|^2}\r)\,.
\end{multline*}
Here, $(A^T)^{-1}\l(U_N - Z_N / 2\r) \sim \mathcal{N}(0, I_d)$ and thus $\l\|(A^T)^{-1}\l(U_N - Z_N / 2\r)\r\|^2$ has $\chi^2$ distribution with $d$ degrees of freedom. By the tail probability estimate for $\chi^2$ distribution \cite[e.g., Lemma 1 in][]{laurent2000adaptive}, for $N$ large enough,
\begin{align*}
    \mb P\l(\l\|(A^T)^{-1}\l(W_N - Z_N / 2\r)\r\|^2 \geq \frac{N\delta^2_N}{4\| A^T \|^2}\r) \leq e^{-C_d\frac{N\delta_N^2}{\|A^T\|^2}}\,.
\end{align*}
Therefore, 
\begin{multline*}
    O_p(N^{d/2}) e^{2C(\rho)\mathcal{O}\Delta_n\sqrt{n}} \times \\ \int_{\delta_N/2 \leq \|\theta - \theta_0\| \leq C_1} \pi(\theta)e^{-N\l(\wh L'(\theta) - \wh L'(\theta_0)\r) - \frac{N}{2}\wh H(\theta_0)} d\theta \\
    \leq O_p(1)e^{2C(\rho)\mathcal{O}\Delta_n\sqrt{n} - C_d N \delta_N^2}\,.
\end{multline*}
According to the assumption of the theorem, the quantity above converges to $0$ in probability as $N\to\infty$.

\end{document}